\title[Non-commutative virtual structure sheaves]{Non-commutative 
virtual structure sheaves}
\date{}
\author{Yukinobu Toda}
\DeclareFontFamily{U}{rsfs}{%
\skewchar\font127}
\DeclareFontShape{U}{rsfs}{m}{n}{%
<-6>rsfs5<6-8.5>rsfs7<8.5->rsfs10}{}
\DeclareSymbolFont{rsfs}{U}{rsfs}{m}{n}
\DeclareRobustCommand*\rsfs{%
\@fontswitch\relax\mathrsfs}
\theoremstyle{plain}
\newtheorem{thm}{Theorem}[section]
\newtheorem{prop}[thm]{Proposition}
\newtheorem{lem}[thm]{Lemma}
\newtheorem{defi}[thm]{Definition}
\newtheorem{rmk}[thm]{Remark}
\newtheorem{cor}[thm]{Corollary}
\newtheorem{prop-defi}[thm]{Proposition-Definition}
\newtheorem{thm-defi}[thm]{Theorem-Definition}
\newtheorem{lem-defi}[thm]{Lemma-Definition}
\newtheorem{exam}[thm]{Example}
\newdimen\argwidth
\def\db[#1\db]{
 \setbox0=\hbox{$#1$}\argwidth=\wd0
 \setbox0=\hbox{$\left[\box0\right]$}
  \advance\argwidth by -\wd0
 \left[\kern.3\argwidth\box0 \kern.3\argwidth\right]}
\newcommand{\eE}{\mathcal{E}}
\newcommand{\fF}{\mathcal{F}}
\newcommand{\hH}{\mathcal{H}}
\newcommand{\iI}{\mathcal{I}}
\newcommand{\jJ}{\mathcal{J}}
\newcommand{\lL}{\mathcal{L}}
\newcommand{\mM}{\mathcal{M}}
\newcommand{\nN}{\mathcal{N}}
\newcommand{\oO}{\mathcal{O}}
\newcommand{\pP}{\mathcal{P}}
\newcommand{\qQ}{\mathcal{Q}}
\newcommand{\sS}{\mathcal{S}}
\newcommand{\uU}{\mathcal{U}}
\newcommand{\vV}{\mathcal{V}}
\newcommand{\wW}{\mathcal{W}}
\newcommand{\Hom}{\mathop{\rm Hom}\nolimits}
\newcommand{\dR}{\mathbf{R}}
\newcommand{\dL}{\mathbf{L}}
\newcommand{\id}{\textrm{id}}
\newcommand{\Ext}{\mathop{\rm Ext}\nolimits}
\newcommand{\Spec}{\mathop{\rm Spec}\nolimits}
\newcommand{\Spf}{\mathop{\rm Spf}\nolimits}
\newcommand{\Coh}{\mathop{\rm Coh}\nolimits}
\newcommand{\cneq}{\mathrel{\raise.095ex\hbox{:}\mkern-4.2mu=}}
\newcommand{\eqcn}{\mathrel{=\mkern-4.5mu\raise.095ex\hbox{:}}}
\newcommand{\gr}{\mathop{\rm gr}\nolimits}
\newcommand{\Cone}{\mathop{\rm Cone}\nolimits}
\newcommand{\modu}{\mathop{\rm mod}\nolimits}
\newcommand{\End}{\mathop{\rm End}\nolimits}
\newcommand{\GL}{\mathop{\rm GL}\nolimits}
\newcommand{\tr}{\mathop{\rm tr}\nolimits}
\newcommand{\cl}{\mathop{\rm cl}\nolimits}
\newcommand{\n}{\mathrm{nc}}
\begin{document}
\maketitle

\begin{abstract}
The moduli spaces of 
stable sheaves on projective schemes
admit 
certain
gluing data of Kapranov's NC structures, 
which we call quasi NC structures. 
The 
formal completion of the quasi NC structure 
 at a closed point 
coincides with the pro-representable hull
of the non-commutative deformation functor of the 
corresponding sheaf. 
In this paper, 
we show the existence of 
smooth
non-commutative dg-resolutions 
of the above quasi NC structures,  
and call them  
quasi NCDG structures. 
When there 
are no higher obstruction spaces, 
the quasi NCDG structures
define the notion of 
NC virtual structure sheaves, 
the non-commutative analogue of virtual 
structure sheaves. 
We show that 
the NC virtual structure sheaves are described in terms of 
usual virtual structure sheaves together with 
Schur complexes of the perfect obstruction theories. 
\end{abstract}

\section{Introduction}
The purpose of this paper is to introduce the notion of 
\textit{non-commutative virtual structure sheaves} on the 
moduli spaces of stable sheaves on projective schemes 
without higher obstruction spaces. 
The motivation introducing this concept is 
to construct non-commutative analogue 
of the enumerative invariants 
of sheaves, 
e.g. Donaldson-Thomas (DT) invariants~\cite{Thom}, 
involving non-commutative
deformations of sheaves. 
In this introduction, we first recall some background
of commutative virtual structure sheaves
via smooth commutative dg-schemes, and 
also 
explain quasi 
NC structures on the 
moduli spaces of stable 
sheaves obtained in~\cite{Todnc}. 
We then state 
the existence of
smooth non-commutative dg-enhancements
on the moduli spaces of stable sheaves, 
called quasi NCDG structures, 
which govern the commutative
dg-enhancements and the quasi NC structures.  
The construction of quasi NCDG structures leads to 
the definition of NC virtual structure sheaves. 
  
\subsection{Commutative virtual structure sheaves}
Let $X$
be a projective scheme,
and $M_{\alpha}$ the moduli space of 
stable sheaves on $X$ with Hilbert polynomial $\alpha$. 
In general, the moduli space
$M_{\alpha}$ 
may not have the expected dimension
at $[E] \in M_{\alpha}$:
\begin{align}\label{exp.dim}
\mathrm{exp.dim}_{[E]}M_{\alpha}
\cneq 
\dim \Ext^1(E, E)-\dim \Ext^2(E, E). 
\end{align}
Here $\Ext^1(E, E)$ is the tangent space of $M_{\alpha}$
at $[E] \in M_{\alpha}$, and $\Ext^2(E, E)$ is the obstruction space. 
As long as 
there are no higher obstruction spaces, i.e. 
\begin{align}\label{vanish:higher}
\Ext^{\ge 3}(E, E)=0
\ 
\mbox{ for any }
[E] \in M_{\alpha}
\end{align} the 
expected dimension (\ref{exp.dim})
is 
locally constant on $M_{\alpha}$, and 
in this case
the virtual fundamental class
$[M_{\alpha}]^{\rm{vir}} \in A_{\ast}(M_{\alpha})$
with the expected dimension 
(\ref{exp.dim}) can be constructed 
using the notion of perfect obstruction theory~\cite{BF}. 
The integration of the virtual fundamental class
yields interesting enumerative invariants of sheaves, such as
DT invariants~\cite{Thom}. 

From the construction of the virtual class, it admits a 
natural K-theoretic enhancement (cf.~\cite[Remark~5.4]{BF})
\begin{align}\label{intro:K}
\oO_{M_{\alpha}}^{\rm{vir}} \in K_0(M_{\alpha})
\end{align}
called the 
\textit{virtual structure sheaf} of $M_{\alpha}$. 
It recovers the virtual fundamental class 
by applying the cycle map to (\ref{intro:K}).
It was also suggested by Kontsevich~\cite{Ktor}
that $M_{\alpha}$ may be obtained as 
the zero-th truncation 
of a smooth commutative dg-scheme
$(N_{\alpha}, \oO_{N_{\alpha}, \bullet})$, 
i.e. $\oO_{M_{\alpha}}=\hH_0(\oO_{N_{\alpha}, \bullet})$, 
and the virtual structure sheaf (\ref{intro:K}) 
may be described as 
\begin{align}\label{intro:Kid}
\oO_{M_{\alpha}}^{\rm{vir}}
=\sum_{i\in \mathbb{Z}} (-1)^i [\hH_i(\oO_{N_{\alpha}, \bullet})]. 
\end{align}
The dg-scheme 
$(N_{\alpha}, \oO_{N_{\alpha}, \bullet})$ was
later
constructed by 
To$\ddot{\textrm{e}}$n-Vaqui{\'e}~\cite{Toen2}
and
Behrend-Fontanine-Hwang-Rose~\cite{BFHR}.
Also the identity (\ref{intro:Kid})
was established by Fontanine-Kapranov~\cite{CFK}. 

\subsection{Quasi NC structures on $M_{\alpha}$}
In the previous paper~\cite{Todnc}, the moduli space 
$M_{\alpha}$ turned out to admit
a certain non-commutative structure, 
giving an enhancement of $M_{\alpha}$
different 
from the commutative dg-enhancement
 $(N_{\alpha}, \oO_{N_{\alpha}, \bullet})$. 
Such a non-commutative structure was formulated 
in terms of Kapranov's NC schemes~\cite{Kap17}, which are 
ringed spaces whose structure sheaves are possibly non-commutative, 
but formal in the non-commutative direction. 
We refer to~\cite{PoTu}, \cite{Hendr} for the recent developments 
on Kapranov's NC schemes. 

The above non-commutative structure can be 
naturally observed from the formal deformation theory. 
For $[E] \in M_{\alpha}$, 
the formal deformation theory of $E$ is governed by 
the dg-algebra $\dR \Hom(E, E)$, which 
is quasi-isomorphic to 
a minimal $A_{\infty}$-algebra
\begin{align}\label{intro:A}
(\Ext^{\ast}(E, E), \{m_n\}_{n\ge 2}). 
\end{align}
The formal solution of the Mauer-Cartan 
equation of the $A_{\infty}$-algebra (\ref{intro:A})
yields
the not necessary commutative algebra
\begin{align}\label{intro:R}
R_E^{\n} \cneq \frac{\widehat{T}(\Ext^1(E, E)^{\vee})}
{\left(\sum_{n\ge 2}m_n^{\vee} \right)}.
\end{align}
Here $m_n^{\vee}$ is the 
dual of the $A_{\infty}$-product
\begin{align*}
m_n \colon 
\Ext^1(E, E)^{\otimes n} \to 
\Ext^2(E, E). 
\end{align*}
The
algebra (\ref{intro:R}) 
is an enhancement of 
the commutative algebra $\widehat{\oO}_{M_{\alpha}, [E]}$
in the sense that 
\begin{align}\label{intro:RM}
(R_E^{\n})^{ab} \cong \widehat{\oO}_{M_{\alpha}, [E]}. 
\end{align}
Indeed, the algebra $R_E^{\n}$ is a
pro-representable hull of the non-commutative deformation functor 
of $E$ developed in~\cite{Lau}, \cite{Erik}, \cite{ESe}, \cite{ELO}, 
\cite{ELO2}, \cite{ELO3}. 
The main result of~\cite{Todnc} was 
to construct a kind of globalization of the isomorphism (\ref{intro:RM}), 
as follows:
\begin{thm}\emph{(\cite[Theorem~1.2]{Todnc})}
There exists an affine open cover
$\{V_i\}_{i\in \mathbb{I}}$
of $M_{\alpha}$, 
ringed spaces $V_i^{\n}$ and isomorphisms
$\phi_{ij}$
\begin{align}\label{intro:qnc}
V_i^{\n} = (V_i, \oO_{V_i}^{\n}), \ 
\phi_{ij} \colon V_j^{\n}|_{V_{ij}} \stackrel{\cong}{\to}
V_i^{\n}|_{V_{ij}}
\end{align}
where $V_i^{\n}$ is Kapranov's 
affine NC scheme~\cite{Kap17}, 
such that $\phi_{ij}^{ab}=\id$
and 
$\widehat{\oO}_{V_i, [E]}^{\n} \cong R_E^{\n}$
for any $[E] \in V_i$. 
\end{thm}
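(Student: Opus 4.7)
The plan is to globalize the pointwise identification (\ref{intro:RM}) by constructing families of minimal $A_\infty$-algebras over local patches and then dualizing a sheaf-level Maurer--Cartan equation. First, I would cover $M_\alpha$ by affine opens $V_i$ small enough that a quasi-universal family $\eE_i$ of stable sheaves exists over $V_i$. The dg-algebra $\dR\HOM(\eE_i, \eE_i)$ governs deformations of the family, and by applying homological perturbation to a strictly perfect representative I would construct a minimal $A_\infty$-model whose underlying graded sheaf is $\bigoplus_k \mathcal{E}xt^k(\eE_i, \eE_i)$ and whose products restrict at each closed point $[E] \in V_i$ to the $\{m_n\}$ of (\ref{intro:A}).

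Next I would define $\oO_{V_i}^{\n}$ by the global analogue of (\ref{intro:R}): take the completed tensor $\oO_{V_i}$-algebra generated by $\mathcal{E}xt^1(\eE_i, \eE_i)^\vee$, and quotient by the two-sided ideal produced by dualizing the sheaf-level $A_\infty$-products $m_n$ (i.e.\ the Maurer--Cartan relations of the family). To check that $V_i^{\n} = (V_i, \oO_{V_i}^{\n})$ is a Kapranov affine NC scheme, I would verify (i) that the NC-nilpotent filtration is finitely generated in each order using Noetherianity of $\oO_{V_i}$ and finiteness of the relevant $\mathcal{E}xt$-sheaves, and (ii) that abelianization returns $\oO_{V_i}$: the abelianized MC equation reproduces the classical obstruction equations which cut out $M_\alpha$ in a smooth ambient neighborhood, so the argument establishing $(R_E^{\n})^{ab} \cong \widehat{\oO}_{M_\alpha, [E]}$ extends to the relative setting. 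The pointwise formal identification $\widehat{\oO}_{V_i, [E]}^{\n} \cong R_E^{\n}$ then holds by construction.

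For the gluing data, on $V_{ij} = V_i \cap V_j$ two choices of quasi-universal family differ by a line-bundle twist $\eE_j|_{V_{ij}} \cong \eE_i|_{V_{ij}} \otimes L_{ij}$, which induces a canonical quasi-isomorphism of the two dg-endomorphism algebras and hence, after transfer, an $A_\infty$-isomorphism of their minimal models. Dualizing this $A_\infty$-isomorphism produces the desired isomorphism $\phi_{ij} \colon V_j^{\n}|_{V_{ij}} \simto V_i^{\n}|_{V_{ij}}$ of NC ringed spaces. The condition $\phi_{ij}^{ab} = \id$ is automatic: twisting by a line bundle does not alter the classical moduli point represented by $E$, so the induced map on abelianizations is the identity on $\oO_{V_{ij}}$.

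The principal obstacle will be promoting the $A_\infty$-model from a pointwise object to a genuine sheaf of $A_\infty$-structures that behaves canonically under base change, so that both the local construction of $\oO_{V_i}^{\n}$ and its transformation under a line-bundle twist are well-defined up to controlled $A_\infty$-gauge. This requires fixing good locally free resolutions of $\eE_i$, executing homological perturbation at the level of complexes of sheaves, and showing that any two choices of resolution and perturbation data yield $A_\infty$-isomorphic families whose dualized ideals coincide. A secondary but subtler point is verifying the cocycle condition for $\{\phi_{ij}\}$ on triple overlaps; this reduces ultimately to the associativity of the tensor products $L_{ij} \otimes L_{jk} \cong L_{ik}$, but must be traced carefully through the transfer morphisms of the $A_\infty$-models to confirm that the resulting isomorphisms of NC algebras satisfy $\phi_{ij} \circ \phi_{jk} = \phi_{ik}$ on $V_i \cap V_j \cap V_k$.
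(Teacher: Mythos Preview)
Your approach differs from the paper's and has a real gap in the globalization step. The paper does not work with sheaf-level $A_\infty$-structures at all. Instead it embeds $M_\alpha$ into the moduli space $M_\gamma$ of stable representations of a quiver with relations $(Q_{[p,q]}, I)$ via the functor $\Gamma_{[p,q]}$; this $M_\gamma$ sits as a closed subscheme of a \emph{smooth} ambient $N_\gamma$ (representations with the relations dropped). On affine opens $U \subset N_\gamma$ where the universal bundles trivialize, one takes Kapranov's NC smooth thickening $U^{\n}$ of $U$, lifts the universal representation to $U^{\n}$, and defines $\oO_V^{\n}$ as the quotient of $\oO_U^{\n}$ by the two-sided ideal generated by the quiver relations. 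The $\phi_{ij}$ are then produced not by comparing $A_\infty$-models but by showing each $V_i^{\n}$ is an NC hull of an intrinsic moduli functor $h_\gamma|_{V_i}$, so that uniqueness of hulls gives (non-canonical) isomorphisms on overlaps; the identification $\widehat{\oO}_{V_i,[E]}^{\n}\cong R_E^{\n}$ is a separate check afterward.

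The gap in your construction is this: at a point, $R_E^{\n} = \widehat{T}(\Ext^1(E,E)^\vee)/(\ldots)$ starts from the free non-commutative power-series ring on the tangent space, and its abelianization is $\widehat{\oO}_{M_\alpha,[E]}$ precisely because the base is a point and all of the geometry lives in the fibre. Over $V_i$, however, the object $\widehat{T}_{\oO_{V_i}}(\mathcal{E}xt^1(\eE_i,\eE_i)^\vee)$ has abelianization $\widehat{S}_{\oO_{V_i}}(\mathcal{E}xt^1(\eE_i,\eE_i)^\vee)$, which is (roughly) the completion of the tangent bundle along its zero section, not $\oO_{V_i}$. An NC thickening of $V_i$ in Kapranov's sense must be a nilpotent-in-commutators extension of $\oO_{V_i}$ itself, not an $\oO_{V_i}$-algebra with new degree-zero generators; your recipe produces the latter, so neither the verification $(\oO_{V_i}^{\n})^{ab}=\oO_{V_i}$ nor the claim that $V_i^{\n}$ is an affine NC scheme goes through. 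This is exactly why the paper first passes to a smooth ambient and invokes Kapranov's existence theorem for smooth NC thickenings, then cuts down by equations. A smaller point: you devote effort to the cocycle condition $\phi_{ij}\circ\phi_{jk}=\phi_{ik}$, but a \emph{quasi} NC structure does not require it---the $\phi_{ij}$ coming from NC hulls are non-canonical and the cocycle condition can, and in general does, fail.
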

The data (\ref{intro:qnc})
was called a \textit{quasi NC structure} of $M_{\alpha}$
in~\cite{Todnc}.

\subsection{Quasi NCDG structures}
Now we have obtained 
two kinds of enhancement of $M_{\alpha}$: 
a commutative dg structure 
and a quasi NC structure. 
It is a natural 
question to construct a
further enhancement which 
governs
these two structures. 
We introduce the notion of 
\textit{quasi NCDG structures}
on commutative dg-schemes, and answer 
this question. 
Roughly speaking, 
a quasi NCDG structure on a commutative 
dg-scheme $(N, \oO_{N, \bullet})$
is an affine open cover 
$\{U_i\}_{i\in \mathbb{I}}$ of $N$
together with 
sheaves of non-commutative dg-algebras 
$\oO_{U_i, \bullet}^{\n}$ on $U_i$ satisfying the 
following: 
\begin{itemize}
\item We have $(\oO_{U_i, \bullet}^{\n})^{ab}=\oO_{N, \bullet}|_{U_i}$. 
\item We have the isomorphisms
$\phi_{ij, \bullet} \colon 
(U_{ij}, \oO_{U_j, \bullet}^{\n}|_{U_{ij}})
\stackrel{\cong}{\to}(U_{ij}, \oO_{U_i, \bullet}^{\n}|_{U_{ij}})$
satisfying $\phi_{ij, \bullet}^{ab}=\id$. 
\end{itemize}
We will show the following result: 
\begin{thm}\emph{(Theorem~\ref{thm:ncvir2})}\label{intro:NCDG}
There is a smooth quasi NCDG structure on 
the dg-moduli space
$(N_{\alpha}, \oO_{N_{\alpha}, \bullet})$
whose zero-th
truncation gives a quasi NC structure (\ref{intro:qnc}). 
\end{thm}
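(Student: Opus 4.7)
The plan is to combine the Maurer--Cartan construction of the smooth commutative dg-moduli space from~\cite{Toen2, BFHR} with the gluing strategy of~\cite{Todnc} that produced the quasi NC structure~(\ref{intro:qnc}).

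First I would perform a local construction. Over each affine chart $V_i$ in the cover provided by the theorem of~\cite{Todnc} recalled above, fix a local universal complex $\eE_i$ on $X \times V_i$ and a finite locally free representative of $\dR\HOM(\eE_i, \eE_i)$, producing a genuine sheaf of dg-algebras $A_{i, \bullet}$ on $V_i$ with $\hH^0(A_{i, \bullet}) \cong \oO_{V_i}$. Mimicking the Maurer--Cartan construction of~\cite{Toen2, BFHR} but omitting the commutativity quotient, I would form the sheaf of completed free associative dg-algebras generated by the shifted dual of $A_{i, \bullet}$, with differential equal to the sum of the dual of the internal differential of $A_{i, \bullet}$ and the dual of its full (unreduced) multiplication. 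This yields a smooth sheaf of non-commutative dg-algebras $\oO_{U_i, \bullet}^{\n}$ whose abelianization is, by construction, the local dg-chart of $(N_\alpha, \oO_{N_\alpha, \bullet})$ on $U_i$, and whose zero-th truncation recovers $R_E^{\n}$ formally at each $[E] \in V_i$ via the identification~(\ref{intro:RM}). The key point, in contrast with~(\ref{intro:R}), is that one works with the full dg-algebra $A_{i, \bullet}$ rather than with its minimal $A_\infty$-model, which is what makes the resulting NC dg-algebra \emph{smooth} in the Kapranov sense.

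Next I would glue. On an overlap $U_{ij}$ the two local dg-algebras $A_{i, \bullet}$ and $A_{j, \bullet}$ resolve the same derived endomorphism complex, hence are connected by a zig-zag of dg-algebra quasi-isomorphisms. Passing to cofibrant replacements, this zig-zag can be rectified, and I would lift it to an isomorphism
\begin{equation*}
\phi_{ij, \bullet} \colon
(U_{ij}, \oO_{U_j, \bullet}^{\n}|_{U_{ij}})
\stackrel{\cong}{\to}
(U_{ij}, \oO_{U_i, \bullet}^{\n}|_{U_{ij}})
\end{equation*}
of the completed NC Maurer--Cartan dg-algebras. The lift is built inductively along the filtration by powers of the commutator ideal, with the base step given by the identity on abelianizations and with the zero-th cohomology matched to the transition $\phi_{ij}$ coming from~(\ref{intro:qnc}).

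The main technical obstacle will be producing $\phi_{ij, \bullet}$ simultaneously compatibly with the abelianization to $\oO_{N_\alpha, \bullet}$ and with the zero-th truncation to $\phi_{ij}$, and then verifying the cocycle condition on triple overlaps. The naive composition $\phi_{ki, \bullet}\, \phi_{jk, \bullet}\, \phi_{ij, \bullet}$ will typically fail to be the identity by terms that vanish only modulo commutators and modulo the differential; these are controlled by the obstruction-theoretic argument of~\cite{Todnc}, refining the cover if necessary and correcting the lifts step by step along the commutator filtration. The dg direction contributes an extra filtration, but smoothness of $A_{i, \bullet}$ together with the vanishing of the relevant higher cohomology groups of the controlling cotangent-type complex allow the induction to close exactly as in the purely NC case treated in~\cite{Todnc}.
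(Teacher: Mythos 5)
Your overall shape — a non-commutative bar/Maurer--Cartan construction locally, then gluing using the essential uniqueness of NC smooth thickenings — is the right one, but there are two genuine gaps. The first concerns the local models. The theorem asks for affine NCDG structures whose abelianizations are \emph{equal} to $\oO_{N_{\alpha},\bullet}|_{U_i}$, not merely quasi-isomorphic to it; if you choose an arbitrary ``finite locally free representative of $\dR\HOM(\eE_i,\eE_i)$'' on each chart, your charts will not reproduce the given dg-structure sheaf. The paper pins this down by working throughout with one specific model: the functor $E\mapsto\bigoplus_{i=p}^{q}H^0(E(i))$ identifies $M_{\alpha}$ with a moduli space $M_{\gamma}$ of representations of a quiver $Q_{[p,q]}$ with relations, sitting inside a single smooth ambient scheme $N_{\gamma}$ carrying a universal representation $\vV_{\bullet}$; the sheaf $\oO_{N_{\alpha},\bullet}$ of the theorem is by definition $S_{\oO_{N_{\alpha}}}(\bigoplus_{n\ge2}\Hom_{\rm gr}(\mathfrak{m}^{\otimes n},\eE nd(\vV_{\bullet}))[1]^{\vee})$ for this universal bundle. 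The affine NCDG charts are then the tensor algebras $\Lambda_{U,\bullet}^{\n}=\Gamma(\oO_U^{\n})\ast T(\overline{W})$ over an NC smooth thickening $\oO_U^{\n}$ of a trivializing affine open $U\subset N_{\gamma}$, with a differential $Q=Q_0+Q_1+Q_2$ that depends on a chosen lift $\widehat{e}\colon\mathfrak{m}\to\End_{\oO_U^{\n}}(\vV_U^{\n})$ of the universal representation to the NC thickening. Your description of the differential as ``the dual of the internal differential plus the dual of the multiplication'' misses this dependence on the NC lift, which is exactly where the non-formal (Zariski-local) content lies.

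The second and more serious gap is the gluing. A zig-zag of dg-algebra quasi-isomorphisms between two resolutions, even after cofibrant replacement, does not rectify to a strict isomorphism of the associated completed NC dg-algebras; at best one obtains $A_{\infty}$-type morphisms, whereas Definition~\ref{def:QNC2} demands genuine isomorphisms $\phi_{ij,\bullet}$ of NCDG schemes with $\phi_{ij,\bullet}^{ab}=\id$. The paper avoids this entirely: since every chart is built functorially from the pair (NC smooth thickening $\oO_{U_i}^{\n}$, NC lift $\vV_{U_i}^{\n}$ of the \emph{same} universal representation), the isomorphisms $\phi_{ij}$ and $g_{ij}$ of these pairs supplied by the NC-hull property of the moduli functor (Proposition~\ref{prop:nchull} and \cite[Corollary~3.12]{Todnc}) induce the required $\phi_{ij,\bullet}$ directly. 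Finally, your last paragraph tries to repair the cocycle condition on triple overlaps; this is not required by the definition of a quasi NCDG structure and is not claimed by the theorem — the paper deliberately works with the weaker ``quasi'' notion precisely because the cocycle condition is not expected to hold, and the K-theoretic constructions downstream only use the associated graded pieces, which do glue.
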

The quasi NCDG structure in Theorem~\ref{intro:NCDG}
fits into the upper half of the 
picture in Figure~\ref{fig:intro}.

\begin{figure}\label{fig:intro}
\caption{Relations of DG structures, quasi NC structures and quasi NCDG
structures}
\begin{align}\notag
\xymatrix{
\fbox{$\begin{array}{c}
\mbox{Quasi NC structure} \\
\{(V_i, \oO_{V_i}^{\n})\}_{i\in \mathbb{I}} 
\end{array}$} 
\ar[rr]^-{\rm{abelization}} & &
\ovalbox{$\begin{array}{c}
\mbox{Moduli space of stable sheaves} \\
M_{\alpha}
\end{array}$} \\
\doublebox{$\begin{array}{c}\mbox{Quasi NCDG structure} \\
\{(U_i, \oO_{U_i, \bullet}^{\n})\}_{i\in \mathbb{I}}
\end{array}$} 
\ar[u]^{\rm{truncation}} 
\ar[d]_{\rm{cohomology}}
\ar[rr]^-{\rm{abelization}} & &
\ovalbox{$\begin{array}{c}
\mbox{Commutative DG structure} \\
(N_{\alpha}, \oO_{N_{\alpha}, \bullet}) 
\end{array}$} \ar[u]^{\rm{truncation}} 
\ar[d]_{\rm{cohomology}} \\
\doublebox{$\begin{array}{c}
\mbox{NC virtual structure sheaves} \\
(\oO_{M_{\alpha}}^{\rm{ncvir}})^{\le d} \in K_0(M_{\alpha})
\end{array}$
} & & 
\ovalbox{$\begin{array}{c}
\mbox{Virtual structure sheaf} \\
\oO_{M_{\alpha}}^{\rm{vir}} \in K_0(M_{\alpha})
\end{array}$}
}
\end{align}
\end{figure}
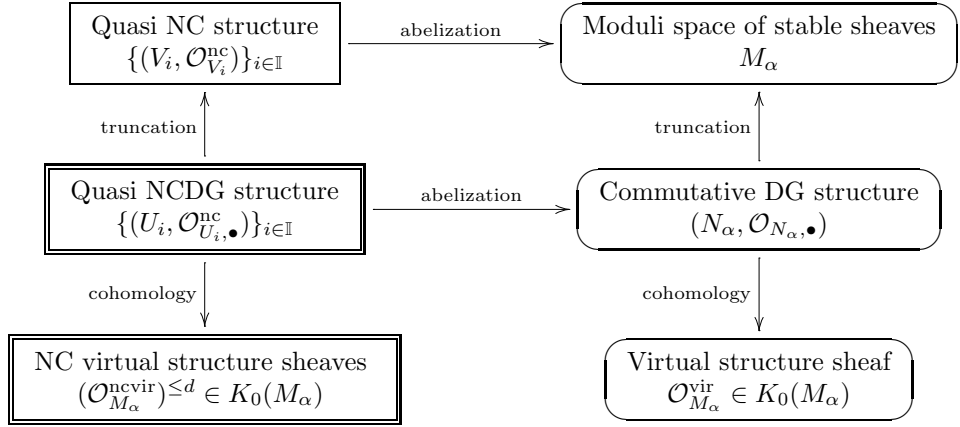

\subsection{NC virtual structure sheaves}
The quasi NCDG structure in Theorem~\ref{intro:NCDG}
is interpreted as a smooth 
dg-resolution of the quasi NC structure (\ref{intro:qnc}).
For simplicity, let us assume that 
the quasi NC structure 
$\oO_{U_i, \bullet}^{\n}$
in Theorem~\ref{intro:NCDG} glue to give 
a global sheaf of non-commutative dg-algebras
$\oO_{N_{\alpha}, \bullet}^{\n}$
on $N_{\alpha}$, i.e. 
the isomorphisms $\phi_{ij, \bullet}$ satisfy the 
cocycle condition. 
As an analogy of the identity (\ref{intro:Kid}),
one may try to define the NC virtual structure sheaf 
by 
\begin{align}\label{intro:sum}
\oO_{M_{\alpha}}^{\rm{ncvir}} =
\sum_{i\in \mathbb{Z}}(-1)^i [\hH_i(\oO_{N_{\alpha}, \bullet}^{\n})]. 
\end{align} 
The issue of the above construction is that 
the sum (\ref{intro:sum})
may be an infinite sum, so 
does not make sense, even if the 
condition (\ref{vanish:higher}) is satisfied. 

Instead of (\ref{intro:sum}), 
if the condition (\ref{vanish:higher}) is satisfied, 
the following sum turns out to be finite 
for each $d\in \mathbb{Z}_{\ge 0}$: 
\begin{align}\label{intro:dvir}
(\oO_{M_{\alpha}}^{\rm{ncvir}})^{\le d}
\cneq \sum_{i\in \mathbb{Z}}
(-1)^i [\hH_i((\oO_{N_{\alpha}, \bullet}^{\n})^{\le d})].
\end{align}
Here 
$(\oO_{N_{\alpha}, \bullet}^{\n})^{\le d}$
is the quotient of $\oO_{N_{\alpha}, \bullet}^{\n}$
by its $d$-th step 
NC filtration
(cf.~Subsection~\ref{subsec:NCfilt}). 
The quotient $(\oO_{N_{\alpha}, \bullet}^{\n})^{\le d}$
is interpreted as a $d$-smooth dg-resolution of the 
quasi NC structure 
$\{(V_i, (\oO_{V_i}^{\n})^{\le d})\}_{i\in \mathbb{I}}$
 on $M_{\alpha}$, 
hence (\ref{intro:dvir}) is regarded as a $d$-smooth 
thickening of (\ref{intro:K}). 
Moreover the sum (\ref{intro:dvir})
also makes sense even if 
the quasi NCDG structure in Theorem~\ref{intro:NCDG}
does not satisfy the cocycle condition (cf.~Definition~\ref{defi:ncvir}).  
We call the sum (\ref{intro:dvir}) 
as \textit{d-th NC virtual structure sheaf} of 
the quasi NC structure (\ref{intro:qnc}).

\subsection{Descriptions via perfect obstruction theories} 
We will prove that 
the $d$-th NC virtual structure sheaf (\ref{intro:dvir})
is described in terms of 
the usual virtual structure sheaf (\ref{intro:K})
together with 
the perfect obstruction theory $\eE_{\bullet}
 \to \tau_{\ge -1} \dL_{M_{\alpha}}$
induced by the cotangent complex of 
the commutative dg-scheme $(N_{\alpha}, \oO_{N_{\alpha}, \bullet})$. 
We have the following result:
\begin{thm}\emph{(Theorem~\ref{thm:formula})}\label{thm:intro:perf}
We have the following formula
\begin{align}\label{intro:ncvir:formula}
(\oO_{M_{\alpha}}^{\rm{ncvir}})^{\le d}=\oO_{M_{\alpha}}^{\rm{vir}} 
\otimes_{\oO_{M_{\alpha}}}
[S_{\oO_{M_{\alpha}}}L_{\oO_{M_{\alpha}}}^{+}
(\eE_{\bullet})^{\le d}_{\bullet}]. 
\end{align}
\end{thm}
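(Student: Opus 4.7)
The plan is to reduce the identity in K-theory to a local calculation on an affine chart of the quasi NCDG structure, where the NC filtration on $\oO_{U_i, \bullet}^{\n}$ can be analyzed explicitly, and then to use a PBW-type decomposition to produce the Schur complex factor.

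First, I would unwind the definition of $(\oO_{M_{\alpha}}^{\rm{ncvir}})^{\le d}$ from Definition~\ref{defi:ncvir}. Because each transition isomorphism $\phi_{ij, \bullet}$ satisfies $\phi_{ij, \bullet}^{ab} = \id$ and respects the NC filtration, the associated graded sheaf $\gr^{\bullet}(\oO_{U_i, \bullet}^{\n})^{\le d}$ with respect to the NC filtration of Subsection~\ref{subsec:NCfilt} descends to a globally defined complex of $\oO_{N_{\alpha}, \bullet}$-modules on $N_{\alpha}$, even if the $\oO_{U_i, \bullet}^{\n}$ themselves do not glue. Since the K-theory class of a filtered complex agrees with the K-theory class of its associated graded, the sum (\ref{intro:dvir}) equals $\sum_i (-1)^i [\hH_i(\gr^{\bullet}(\oO_{N_{\alpha}, \bullet}^{\n})^{\le d})]$ and the problem is reduced to computing $\gr^{\bullet}$ globally.

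Next, I would analyze the local model. On each affine chart $U_i$, the smoothness of the quasi NCDG structure means $\oO_{U_i, \bullet}^{\n}$ is, in a formal neighborhood, a completed tensor dg-algebra on the dual of the tangent complex of $(N_\alpha, \oO_{N_\alpha, \bullet})|_{U_i}$. The NC filtration is then the standard filtration by commutator ideals on such a tensor algebra, and a Poincar\'e-Birkhoff-Witt argument identifies its associated graded, level by level, with the symmetric algebra on the free Lie coalgebra of $\Omega_{N_{\alpha}, \bullet}^1|_{U_i}$. Concretely, truncation at the $d$-th NC filtration step gives
\begin{align*}
\gr^{\bullet}\bigl(\oO_{U_i, \bullet}^{\n}\bigr)^{\le d} \;\cong\; \oO_{N_{\alpha}, \bullet}|_{U_i} \otimes_{\oO_{N_{\alpha}, \bullet}|_{U_i}} S_{\oO_{N_{\alpha}, \bullet}} L^{+}_{\oO_{N_{\alpha}, \bullet}}\bigl(\Omega^1_{N_{\alpha}, \bullet}\bigr)^{\le d}_{\bullet}|_{U_i},
\end{align*}
where the right-hand side is the Schur complex description. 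The compatibility of this identification with $\phi_{ij,\bullet}$ follows because $\phi_{ij,\bullet}^{ab} = \id$, so the induced action on $\Omega^1_{N_{\alpha}, \bullet}$ is trivial modulo higher commutators, and hence the induced map on the graded pieces is the identity.

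Finally, I would pass to $M_{\alpha}$ by truncation. Tensoring the above identification with $\oO_{M_{\alpha}}$ over $\oO_{N_\alpha, \bullet}$, the commutative virtual structure sheaf identity (\ref{intro:Kid}) turns the $\oO_{N_{\alpha}, \bullet}|_{U_i}$ factor into $\oO_{M_{\alpha}}^{\rm{vir}}$, while $\Omega^1_{N_{\alpha}, \bullet} \otimes_{\oO_{N_{\alpha}, \bullet}} \oO_{M_{\alpha}}$ identifies with $\eE_{\bullet}$ by construction of the perfect obstruction theory from the cotangent complex of the dg-moduli space. Assembling these gives the formula (\ref{intro:ncvir:formula}). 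The main obstacle I anticipate is the PBW step: establishing the Schur complex description of $\gr^{\bullet}(\oO_{U_i,\bullet}^{\n})^{\le d}$ in the dg setting and verifying that it is compatible with the transition data $\phi_{ij,\bullet}$ in a way that permits the K-theory sum (\ref{intro:dvir}) to be computed even when no global NC dg-algebra exists. Secondarily, one must check that the Schur complex at level $k$ is perfect so that its class in $K_0(M_{\alpha})$ and the finiteness of the sum are well defined; this uses the assumption (\ref{vanish:higher}) together with perfectness of $\eE_{\bullet}$.
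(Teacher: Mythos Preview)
Your outline follows essentially the same route as the paper: pass to the associated graded of the NC filtration (which glues globally), identify it via a PBW/Poisson-envelope argument with $S_{\oO_{N_\alpha,\bullet}}L^+_{\oO_{N_\alpha,\bullet}}(\Omega_{N_\alpha,\bullet})^{\le d}$, then apply the Ciocan--Fontanine--Kapranov identity $[\hspace{-0.5mm}[E_\bullet]\hspace{-0.5mm}]=\oO_M^{\rm vir}\otimes[\overline{E}_\bullet|_M]$ to extract the virtual sheaf factor. Two points need tightening.

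First, your displayed isomorphism $\gr^{\bullet}(\oO_{U_i,\bullet}^{\n})^{\le d}\cong S_{\oO_{N_\alpha,\bullet}}L^+_{\oO_{N_\alpha,\bullet}}(\Omega^1_{N_\alpha,\bullet})^{\le d}$ is stated too strongly. What actually holds is that $\gr_F$ is canonically the graded Poisson envelope $P(\oO_{N_\alpha,\bullet})$, and $P(\oO_{N_\alpha,\bullet})$ carries a \emph{further} finite filtration whose associated graded is $S_{\oO_{N_\alpha,\bullet}}L^+_{\oO_{N_\alpha,\bullet}}(\Omega_{N_\alpha,\bullet})$. The two sides agree in $K_0(\oO_{N_\alpha,\bullet})$, which is all you need, but they are not isomorphic as dg-modules in general (only when the base is local). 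This is not fatal, but your ``PBW argument'' should be stated at the level of K-classes, not complexes.

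Second, and more seriously, your last step asserts that $\Omega^1_{N_\alpha,\bullet}\otimes_{\oO_{N_\alpha,\bullet}}\oO_{M_\alpha}$ ``identifies with $\eE_\bullet$''. In fact $\overline{\Omega}_{N_\alpha,\bullet}|_{M_\alpha}$ is a complex with many terms ($\qQ_{-k}|_M,\ldots,\qQ_{-1}|_M,\Omega_N|_M$), and the two-term $\eE_\bullet$ is only \emph{quasi-isomorphic} to it. Since $S L^+(-)^{\le d}$ is not an exact functor, you must argue that $[S_{\oO_M}L^+_{\oO_M}(\overline{\Omega}_{N,\bullet}|_M)^{\le d}]=[S_{\oO_M}L^+_{\oO_M}(\eE_\bullet)^{\le d}]$ in $K_0(M_\alpha)$. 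The paper does this by observing that $SL^+(-)^{\le d}$ is a polynomial functor, hence decomposes into Schur functors ${\bf S}_\lambda$, and then proving (via the Littlewood--Richardson decomposition of ${\bf S}_\nu(\mathrm{Cone}(s))$ and induction on $|\lambda|$) that $[{\bf S}_\lambda(\pP_\bullet)]=[{\bf S}_\lambda(\qQ_\bullet)]$ whenever $\pP_\bullet\to\qQ_\bullet$ is a quasi-isomorphism of bounded complexes of vector bundles. This lemma is the missing ingredient in your sketch.
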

Here $L_{\oO_M}(\eE_{\bullet})$ is the
sheaf of super Lie algebras
in $T_{\oO_M}(\eE_{\bullet})$ generated by 
$\eE_{\bullet}$, and 
$S_{\oO_M}(-)$ is the super symmetric product
over $\oO_M$. 
We refer to Subsection~\ref{subsec:viaperf}
for details of the notation of the RHS of (\ref{intro:ncvir:formula}). 
The formula (\ref{intro:ncvir:formula})
implies that (\ref{intro:dvir})
is described using the perfect obstruction theory, 
without referring to quasi NCDG structures.
Also it is described by 
Schur complexes ${\bf S}_{\lambda}(\eE_{\bullet})$ 
for partitions
$\lambda$.  
For example
in the $d=2$ case, the RHS of (\ref{intro:ncvir:formula})
 is written as
(cf.~Corollary~\ref{cor:virform})
\begin{align*}
\oO_{M_{\alpha}}^{\rm{vir}} \otimes_{\oO_{M_{\alpha}}} \left(1+
{\bf S}_{(1, 1)}(\eE_{\bullet}) + {\bf S}_{(2, 1)}(\eE_{\bullet})
+{\bf S}_{(2, 2)}(\eE_{\bullet}) + {\bf S}_{(1, 1, 1, 1)}(\eE_{\bullet})
 \right). 
\end{align*}
In particular, the formula (\ref{intro:ncvir:formula})
implies that 
$(\oO_{M_{\alpha}}^{\rm{ncvir}})^{\le d}=\oO_{M_{\alpha}}^{\rm{vir}}$ if 
the expected dimension of $M_{\alpha}$ is zero. 
This implies that, 
if $X$ is a Calabi-Yau 3-fold, 
the integrations of NC virtual structure sheaves 
coincide with the usual (commutative)
DT invariants. 
On the other hand, if we consider 
moduli spaces of stable sheaves
 on algebraic surfaces or Fano 3-folds
so that they have the positive expected dimensions, 
the resulting NC virtual structure sheaves are in general 
different from the commutative virtual structure sheaves.
In such cases, integrations of their Chern characters may yield 
interesting enumerative invariants. 

In the next paper~\cite{Todnc3}, 
we will pursue another approach 
in constructing interesting enumerative 
invariants of sheaves involving non-commutative 
structures on the moduli spaces of stable sheaves.  
We can consider motives of 
Hilbert schemes of points on 
a quasi NC structure (\ref{intro:qnc}), 
and construct 
certain enumerative invariants 
by integrating the Behrend functions on them. 
If $X$ is a Calabi-Yau 3-fold,  
using wall-crossing argument, we
can 
relate these invariants 
with generalized DT invariants 
counting semistable sheaves on $X$~\cite{JS}, \cite{K-S}
whose definition involves motivic Hall algebras.  
This would give an intrinsic understanding
of the dimension formula~\cite{TodW}
of Donovan-Wemyss's non-commutative widths~\cite{WM}
for floppable rational curves, 
whose detail will be included in~\cite{To-Hu}.

\subsection{Plan of the paper}
In Section~\ref{sec:NC}, we introduce the notion of 
quasi NCDG structures on commutative dg-schemes, and 
use it to define the NC virtual structure sheaves. 
In Section~\ref{sec:Des}, we 
describe the NC virtual structure 
sheaves via perfect obstruction theory, and 
prove Theorem~\ref{intro:ncvir:formula}. 
In Section~\ref{sec:const}, 
we construct quasi NCDG structures on 
the moduli spaces of representations of a certain quiver. 
In Section~\ref{sec:quasi}, using the result of Section~\ref{sec:const}, 
we prove Theorem~\ref{intro:NCDG}. 

\subsection{Acknowledgement}
The author would like to thank 
Tomoyuki Abe, Will Donovan, Zheng Hua 
and Michael Wemyss
for the discussions related to this paper. 
This work is supported by World Premier 
International Research Center Initiative
(WPI initiative), MEXT, Japan, 
and Grant-in Aid
for Scientific Research grant (No.~26287002)
from the Ministry of Education, Culture,
Sports, Science and Technology, Japan.

\subsection{Notation and convention}
In this paper, an algebra always means an associative, 
not necessary commutative,  
$\mathbb{C}$-algebra. 
The tensor product $\otimes$ is over $\mathbb{C}$ if 
there is no subscript. 
Also all the varieties or schemes
are defined over $\mathbb{C}$.

\section{NC virtual structure sheaves}\label{sec:NC}
In this section, 
we recall 
virtual structure sheaves associated to 
commutative dg-schemes, 
and introduce its NC version. 
We prepare the following convention on the 
super symmetric product. 
For a graded vector space $W_{\bullet}$, 
let $S_n$ acts on $W_{\bullet}^{\otimes n}$
in the super sense, i.e. 
the action of the permutation $(i, j) \in S_n$
is  
\begin{align*}
x_1 \otimes \cdots \otimes x_i \otimes \cdots \otimes
x_j &\otimes \cdots \otimes x_n \\
&\mapsto 
(-1)^{\deg x_i \deg x_j} 
x_1 \otimes \cdots \otimes x_j \otimes \cdots \otimes
x_i\otimes \cdots \otimes x_n
\end{align*}
for homogeneous elements $x_1, \cdots, x_n \in W_{\bullet}$. 
The \textit{super symmetric product} of
$W_{\bullet}$ is defined by 
\begin{align*}
S(W_{\bullet}) \cneq \bigoplus_{n\ge 0}
(W_{\bullet}^{\otimes n})^{S_n}.
\end{align*}
The above super
symmetric product is obviously 
generalized 
for graded vector bundles $\qQ_{\bullet}$ on a scheme $N$, 
and we obtain 
the sheaf of super commutative graded algebras 
$S_{\oO_N}(\qQ_{\bullet})$ on $N$. 
Here a graded algebra $A_{\bullet}$ is called 
\textit{super graded commutative} if 
$a_e \cdot a_{e'}=(-1)^{e e'} a_{e'} \cdot a_e$
for $a_e \in A_e$, $a_{e'} \in A_{e'}$. 

\subsection{Commutative dg-schemes}
Let $(N, \oO_{N, \bullet})$
be a
smooth commutative dg-scheme, i.e. 
$N$ is a smooth scheme and $\oO_{N, \bullet}$ is 
a sheaf of super commutative dg-algebras of the form\footnote{It may be
more natural to use the upper index $\oO_N^{\bullet}$
to denote the grading 
of the sheaf of dg-algebras.
In this paper, we use the lower index
$\oO_{N, \bullet}$ as we will use several 
other upper gradings.}
\begin{align*}
\oO_{N, \bullet}=S_{\oO_N}(\qQ_{-1} \oplus \cdots \oplus \qQ_{-k})
\end{align*}
for vector bundles $\qQ_{i}$ on $N$
located in degree $i$. 
The zero-th cohomology of $\oO_{N, \bullet}$ 
is written as $\oO_N/J$ for the ideal 
sheaf $J \subset \oO_N$, hence 
determines a closed subscheme 
$M \subset N$. 
We write
\begin{align*}
\tau_0(N, \oO_{N, \bullet}) \cneq M
\end{align*} 
and call it the \textit{zero-th truncation} of $(N, \oO_{N, \bullet})$. 

Let $E_{\bullet}$ be a finitely generated dg-$\oO_{N, \bullet}$-module.
We will use two kinds of restrictions of it to $M$
\begin{align}\label{restrict}
E_{\bullet}|_{M} \cneq E_{\bullet} \otimes_{\oO_N} \oO_M, \ 
\overline{E}_{\bullet}|_{M} \cneq E_{\bullet} \otimes_{\oO_{N, \bullet}} \oO_M. \end{align}
By setting $
{\bf O}_{\bullet} \cneq \oO_{N, \bullet}|_{M}$, 
the two restrictions (\ref{restrict})
are related by 
\begin{align*}
\overline{E}_{\bullet}|_{M}=E_{\bullet}|_M/({\bf O}_{\le -1} E_{\bullet}|_{M}).
\end{align*}

Let $\Omega_{N, \bullet}$ be the cotangent complex of 
$(N, \oO_{N, \bullet})$. 
The complex $\overline{\Omega}_{N, \bullet}|_{M}$ is described as 
\begin{align*}
\overline{\Omega}_{N, \bullet}|_{M}
=\left(0 \to \qQ_{-k}|_{M} \to 
\cdots \to \qQ_{-2}|_{M} \stackrel{d_{-1}}{\to}
 \qQ_{-1}|_{M} \stackrel{d_0}{\to} \Omega_N|_{M} \to 0  \right). 
\end{align*}
Let $T_{N, \bullet} \cneq 
\hH om_{\oO_{N, \bullet}}(\Omega_{N, \bullet}, \oO_{N, \bullet})$
be the tangent complex of $(N, \oO_{N, \bullet})$. 
\begin{defi}
A smooth commutative dg-scheme $(N, \oO_{N, \bullet})$ is 
called a $[0, 1]$-manifold if the cohomologies of the complex
$\overline{T}_{N, \bullet}|_{M}$ 
are concentrated on $[0, 1]$. 
\end{defi}
Suppose that $(N, \oO_{N, \bullet})$ is a $[0, 1]$-manifold. 
Then $\overline{T}_{N, \bullet}|_{M}$
is quasi-isomorphic to the complex
\begin{align}\label{T:qis}
0 \to T_{N}|_{M} \stackrel{(d_0)^{\vee}}{\to} K \to 0
\end{align}
where $K$ is the kernel of 
$(d_{-1}|_{M})^{\vee} \colon
 (\qQ_{-1}|_{M})^{\vee} \to (\qQ_{-2}|_{M})^{\vee}$, 
which is a locally free sheaf on $N$. 
We define $\eE_{\bullet}$ to be 
the dual of the complex (\ref{T:qis})
\begin{align}\label{E:qis}
\eE_{\bullet} \cneq (0 \to 
K^{\vee} \stackrel{\overline{d}_0}
\to \Omega_{N}|_{M} \to 0)
\end{align}
where $\Omega_{N}|_{M}$
is located in degree zero
and $\overline{d}_0$
is induced by $d_0$. 
We have the quasi-isomorphism
$\overline{\Omega}_{N, \bullet}|_{M}
\stackrel{\sim}{\to}  \eE_{\bullet}$, and 
by~\cite[Proposition~3.2.4]{CFK}, 
we have the morphism of complexes
\begin{align}\label{perfect}
\eE_{\bullet} \to (0 \to J/J^2 \to \Omega_{N}|_{M} \to 0)
\end{align}
giving a perfect obstruction theory on $M$
in the sense of Behrend-Fantechi~\cite{BF}.

\subsection{Commutative virtual structure sheaves}
For a $[0, 1]$-manifold
$(N, \oO_{N, \bullet})$, let us recall the 
virtual fundamental class 
and its K-theoretic enhancement associated 
to the perfect obstruction theory (\ref{perfect}).
Let $C_{M/N}$ be the normal cone of $M$ in $N$
defined by
\begin{align*}
C_{M/N} \cneq \Spec_{\oO_M} \bigoplus_{k\ge 0} J^k /J^{k+1}. 
\end{align*} 
By~\cite[Proposition~1.2.1]{CFK}, 
we have the closed embedding 
$C_{M/N} \subset K$, hence obtain the following diagram
\begin{align*}
\xymatrix{
C_{M/N} \ar@{^{(}->}[rr] \ar[rd] & & K \ar@<0.5ex>[ld] \\
& M \ar@<0.5ex>[ur]^-j. &
}
\end{align*}
Here $j$ is the zero section. 
The virtual fundamental class associated to (\ref{perfect}) is defined by
\begin{align}\label{def:Ktheory}
[M]^{\rm{vir}} \cneq j^{!}[C_{M/N}] \in A_{\bullet}(M). 
\end{align}
By the above definition, the virtual 
fundamental class has a K-theoretic enhancement, 
called the \textit{virtual 
structure sheaf}
\begin{align}\label{Kth:vir}
\oO_M^{\rm{vir}} \cneq [\dL j^{\ast} \oO_{C_{M/N}}] \in K_0(M). 
\end{align}
Note that the virtual 
structure sheaf
recovers the virtual fundamental class
by
\begin{align*}
\cl(\oO_{M}^{\rm{vir}})=[M]^{\rm{vir}} \in A_{\bullet}(M). 
\end{align*}
Here $\cl$ is the cycle map. 

On the other hand, 
note that each cohomology sheaf 
$\hH_i(\oO_{N, \bullet})$ is a coherent $\oO_M$-module, 
which vanishes for $i\ll 0$ by~\cite[Theorem~2.2.2]{CFK}.
Hence for a finitely generated 
dg-$\oO_{N, \bullet}$-module
$E_{\bullet}$, 
each cohomology $\hH_i(E_{\bullet})$ is a coherent 
$\oO_M$-module, and vanishes for $\lvert i \rvert \gg 0$. 
Therefore 
 the following definition makes sense:
\begin{align*}
[\hspace{-0.5mm}[E_{\bullet}]\hspace{-0.5mm}] 
\cneq \sum_{i \in \mathbb{Z}}(-1)^i [\hH_i(E_{\bullet})] \in K_0(M). 
\end{align*}
The above K-theory classes from 
the dg-schemes
are related to the virtual structure sheaf 
as follows: 
\begin{thm}\emph{(\cite[Theorem~4.4.2]{CFK})}\label{thm:vir}
Let $E_{\bullet}$ be a finitely generated locally free 
dg-$\oO_{N, \bullet}$-module. 
Then we have the equality in $K_0(M)$:
\begin{align*}
[\hspace{-0.5mm}[E_{\bullet}]\hspace{-0.5mm}]=
\oO_M^{\rm{vir}} \otimes_{\oO_M} [\overline{E}_{\bullet}|_{M}]. 
\end{align*}
In particular, we have the identity
$\oO_M^{\rm{vir}}=[\hspace{-0.5mm}[\oO_{N, \bullet}]\hspace{-0.5mm}]$
in $K_0(M)$. 
\end{thm}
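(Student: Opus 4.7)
The plan is to split the theorem into two steps: first reduce to the case $E_{\bullet} = \oO_{N, \bullet}$, and then prove the special case $\oO_M^{\rm{vir}} = [\hspace{-0.5mm}[\oO_{N, \bullet}]\hspace{-0.5mm}]$ directly. For the reduction, observe that both assignments $E_{\bullet} \mapsto [\hspace{-0.5mm}[E_{\bullet}]\hspace{-0.5mm}]$ and $E_{\bullet} \mapsto \oO_M^{\rm{vir}} \otimes_{\oO_M} [\overline{E}_{\bullet}|_M]$ descend to additive homomorphisms from the Grothendieck group of finitely generated locally free dg-$\oO_{N, \bullet}$-modules to $K_0(M)$. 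Since any such $E_{\bullet}$ is locally a direct sum of shifts of $\oO_{N, \bullet}$, both maps are locally determined by their value on $\oO_{N, \bullet}$. A dévissage argument using a global resolution of $\overline{E}_{\bullet}|_M$ by $\oO_M$-vector bundles (each of which lifts to a locally free dg-$\oO_{N,\bullet}$-module via the algebra map $\oO_{N, \bullet} \to \oO_M$) then reduces the general identity to the single equality for the structure sheaf.

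For the structure sheaf case, the strategy is to exhibit both sides as invariants of the normal cone $C_{M/N}$. By definition, $\oO_M^{\rm{vir}} = [\dL j^{\ast} \oO_{C_{M/N}}]$ where $j \colon M \hookrightarrow K$ is the zero section of the obstruction bundle. On the other hand, consider the $J$-adic filtration on $\oO_{N, \bullet}$, where $J = \ker(\oO_N \to \oO_M)$. Its associated graded is essentially a Koszul-type resolution of $\oO_{C_{M/N}}$ realized inside the symmetric algebra $S_{\oO_M}(\qQ_{-1}|_M \oplus \cdots \oplus \qQ_{-k}|_M)$, with differential induced by the internal maps $d_{-i}$ for $i \ge 1$. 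The resulting spectral sequence, together with the closed embedding $C_{M/N} \hookrightarrow K$ from \cite[Proposition~1.2.1]{CFK}, should produce the equality
\[
\sum_{i \in \mathbb{Z}} (-1)^i [\hH_i(\oO_{N,\bullet})] = [\dL j^{\ast} \oO_{C_{M/N}}]
\]
in $K_0(M)$, which is the desired base case.

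The main obstacle is correctly identifying the limit of the $J$-adic spectral sequence in this second step. The differential on $\oO_{N,\bullet}$ mixes weights: the map $d_0 \colon \qQ_{-1} \to \Omega_N$ \emph{decreases} the weight by one and is precisely what produces the ideal $J$, while the higher $d_{-i}$ preserve weight and together with the symmetric powers of the $\qQ_{-i}|_M$ realize the Koszul-type resolution of $C_{M/N}$ inside $K$. Careful bookkeeping is needed to separate these contributions and check that the resulting alternating sum of cohomologies matches $[\dL j^\ast \oO_{C_{M/N}}]$. The $[0,1]$-manifold hypothesis, which ensures that $\overline{T}_{N,\bullet}|_M$ is concentrated in degrees $[0,1]$ so that the obstruction bundle $K$ is genuinely locally free, together with the boundedness $\hH_i(\oO_{N,\bullet}) = 0$ for $i \ll 0$ from \cite[Theorem~2.2.2]{CFK}, are essential inputs for the convergence and finiteness of the spectral sequence.
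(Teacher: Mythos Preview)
The paper does not supply its own proof of this statement; it is quoted from \cite[Theorem~4.4.2]{CFK} and used as a black box for the subsequent Corollary~\ref{cor:K0}. So there is no proof in Toda's text for your sketch to be compared against.

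On its own merits, your reduction in Step~1 has a gap. You propose to resolve $\overline{E}_{\bullet}|_{M}$ by $\oO_M$-vector bundles and lift each term to a locally free dg-$\oO_{N,\bullet}$-module ``via the algebra map $\oO_{N,\bullet} \to \oO_M$''. But restricting scalars along this surjection turns an $\oO_M$-vector bundle into an $\oO_{N,\bullet}$-module that is \emph{not} locally free: already $\oO_M$ itself is a quotient of $\oO_{N,\bullet}$, not a free summand of it. To obtain a genuinely locally free dg-$\oO_{N,\bullet}$-module from a vector bundle on $M$ you would first have to extend that bundle to the ambient smooth scheme $N$ and then tensor with $\oO_{N,\bullet}$ over $\oO_N$; such an extension need not exist globally, so the d\'evissage as written does not reduce the general identity to the structure-sheaf case. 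Your Step~2 (the $J$-adic filtration and normal-cone analysis for $\oO_{N,\bullet}$ itself) is in the right spirit and is close to the line of argument in \cite{CFK}, but the reduction feeding into it needs a different mechanism --- in \cite{CFK} the comparison is carried out via deformation to the normal cone rather than by lifting resolutions from $M$.
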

Note that
for a (not necessary differential) 
graded $\oO_{N, \bullet}$-module
$F_{\bullet}$, 
we can similarly 
define the 
restriction 
$\overline{F}_{\bullet}|_{M}=F_{\bullet} \otimes_{\oO_{N, \bullet}} \oO_M$. 
Let 
\begin{align}\label{K:grade}
[F_{\bullet}]
\in K_0(\oO_{N, \bullet})
\end{align}
be its class in the K-group of 
finitely generated graded $\oO_{N, \bullet}$-modules. 
We have the following corollary of
Theorem~\ref{thm:vir}: 
\begin{cor}\label{cor:K0}
In the situation of Theorem~\ref{thm:vir}, 
let $F_{\bullet}$ be a 
locally free graded $\oO_{N, \bullet}$-module. 
Suppose that 
$[F_{\bullet}]=
[E_{\bullet}]$
in $K_0(\oO_{N, \bullet})$. 
Then we have the identity
\begin{align*}
[\hspace{-0.5mm}[E_{\bullet}]\hspace{-0.5mm}]=
\oO_M^{\rm{vir}} \otimes_{\oO_M} [\overline{F}_{\bullet}|_{M}].
\end{align*}
\end{cor}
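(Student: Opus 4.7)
The plan is to reduce the identity to Theorem~\ref{thm:vir} and then argue that the operation $[G_{\bullet}] \mapsto \sum_i (-1)^i [\overline{G}_i|_{M}]$ factors through a homomorphism out of $K_0(\oO_{N, \bullet})$. Applying Theorem~\ref{thm:vir} to the finitely generated locally free dg-module $E_{\bullet}$ yields
\[
[\hspace{-0.5mm}[E_{\bullet}]\hspace{-0.5mm}] = \oO_M^{\rm vir} \otimes_{\oO_M} [\overline{E}_{\bullet}|_{M}]
\]
in $K_0(M)$, and since the $\oO_M^{\rm vir}$ factor on the right of the desired identity is the same, the corollary reduces to showing that $[\overline{E}_{\bullet}|_{M}] = [\overline{F}_{\bullet}|_{M}]$ in $K_0(M)$, where for a graded $\oO_M$-module $G_{\bullet}$ concentrated in finitely many degrees we set $[G_{\bullet}] \cneq \sum_i (-1)^i [G_i]$.

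The central step is to construct a well-defined group homomorphism
\[
\rho \colon K_0(\oO_{N, \bullet}) \longrightarrow K_0(M), \qquad
\rho([G_{\bullet}]) \cneq \sum_{p, i} (-1)^{p+i} [\mathrm{Tor}_p^{\oO_{N, \bullet}}(G_{\bullet}, \oO_M)_i].
\]
For this I would produce a finite locally free graded resolution of $\oO_M$ as an $\oO_{N, \bullet}$-module: {\'e}tale-locally one can combine a super Koszul resolution of $\oO_N$ over $\oO_{N, \bullet}$ built from the generators of $\qQ_{-1} \oplus \cdots \oplus \qQ_{-k}$ with a finite locally free resolution of $\oO_M$ over $\oO_N$, which exists because $N$ is smooth. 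Boundedness of the resulting derived tensor product with $\oO_M$ ensures $\rho$ takes values in $K_0(M)$, and its additivity on short exact sequences follows from the long exact Tor sequence in each internal degree. When $G_{\bullet}$ is locally free as a graded $\oO_{N, \bullet}$-module, all higher Tors vanish and $\rho([G_{\bullet}]) = [\overline{G}_{\bullet}|_{M}]$.

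Applying this with $G_{\bullet} = E_{\bullet}$ (whose underlying graded module is locally free by hypothesis) and $G_{\bullet} = F_{\bullet}$ gives
\[
[\overline{E}_{\bullet}|_{M}] = \rho([E_{\bullet}]) = \rho([F_{\bullet}]) = [\overline{F}_{\bullet}|_{M}],
\]
where the middle equality uses the hypothesis $[E_{\bullet}] = [F_{\bullet}]$ in $K_0(\oO_{N, \bullet})$; combined with the reduction of the first paragraph, this completes the proof.

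The principal technical obstacle is the construction of the finite locally free graded resolution of $\oO_M$ over $\oO_{N, \bullet}$: although such a resolution should follow from smoothness of both $(N, \oO_{N, \bullet})$ and $N$, one must simultaneously handle the Koszul direction coming from the negative-degree generators $\qQ_{-i}$ and the direction coming from the closed embedding $M \subset N$. An alternative that sidesteps this point is to define $\rho$ directly on the K-group of locally free graded $\oO_{N, \bullet}$-modules, where additivity is manifest since short exact sequences split locally, and then appeal to the coincidence of this K-group with $K_0(\oO_{N, \bullet})$ in the smooth dg setting.
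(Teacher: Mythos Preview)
Your argument is correct, but considerably more elaborate than the paper's. The paper dispatches the corollary in a single sentence: by Theorem~\ref{thm:vir} one has $[\hspace{-0.5mm}[E_{\bullet}]\hspace{-0.5mm}] = \oO_M^{\rm vir}\otimes_{\oO_M}[\overline{E}_{\bullet}|_M]$, and the class $[\overline{E}_{\bullet}|_M]\in K_0(M)$ is independent of the differential on $E_{\bullet}$. In other words, the paper simply observes that $\overline{(-)}|_M$ is computed from the underlying graded module and so the assignment $[G_{\bullet}]\mapsto[\overline{G}_{\bullet}|_M]$ makes sense on $K_0(\oO_{N,\bullet})$; the hypothesis $[E_{\bullet}]=[F_{\bullet}]$ then immediately gives $[\overline{E}_{\bullet}|_M]=[\overline{F}_{\bullet}|_M]$.

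Your Tor-based construction of $\rho$ is a legitimate way to make this descent to $K_0(\oO_{N,\bullet})$ rigorous, and your identification of the technical obstacle (producing a bounded locally free resolution of $\oO_M$ over $\oO_{N,\bullet}$) is honest. But this machinery is heavier than needed: the paper treats the well-definedness of $[G_{\bullet}]\mapsto[\overline{G}_{\bullet}|_M]$ as evident, essentially for the reason you sketch in your final paragraph. Indeed, your ``alternative that sidesteps this point''—defining $\rho$ on the K-group of locally free graded $\oO_{N,\bullet}$-modules, where additivity is automatic because short exact sequences split locally—is precisely the content the paper has in mind, and is the cleaner route here.
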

\begin{proof}
The corollary follows from 
Theorem~\ref{thm:vir}
since  
$[\overline{E}_{\bullet}|_{M}] \in K_0(M)$
is independent of the differential on $E_{\bullet}$. 
\end{proof}

\subsection{Graded NC filtrations}\label{subsec:NCfilt}
We introduce the non-commutative version of 
some notions recalled in the previous subsections. 
Let $R$ be an 
algebra which is not necessary commutative, 
and $W_{\bullet}$ a finite dimensional 
graded vector space
with $W_i=0$ for $i\ge 0$. 
Below, we call a grading induced 
by the grading on $W_{\bullet}$
as $|_{\bullet}$-\textit{grading}.
We set
the $|_{\bullet}$-graded algebra $\Lambda_{\bullet}$ to be
\begin{align}\label{Lambda:bull}
\Lambda_{\bullet} \cneq R \ast T(W_{\bullet}). 
\end{align}
Here $T(W_{\bullet})$ is the tensor algebra
\begin{align*}
T(W_{\bullet}) \cneq 
\bigoplus_{n\ge 0} W_{\bullet}^{\otimes n}
\end{align*} 
and 
$\ast$ is the free product as $\mathbb{C}$-algebras. 
Note that 
\begin{align*}
\Lambda_{>0}=0, \ 
\Lambda_0=\Lambda, \ 
\Lambda_{-1}=R \otimes W_{-1} \otimes R
\end{align*}
and so on. 
We regard $\Lambda_{\bullet}$ as a 
$|_{\bullet}$-graded super Lie algebra 
by setting 
\begin{align*}
[x, y] \cneq xy-(-1)^{ab} yx, \ 
x \in \Lambda_a, \ y \in \Lambda_b.
\end{align*} 
The subspace
 $\Lambda_{\bullet, k}^{\rm{Lie}} \subset \Lambda_{\bullet}$
is defined to be spanned by the elements of the form
\begin{align*}
[x_1, [x_2, \cdots, [x_{k-1}, x_k]\cdots ]]
\end{align*}
for $x_i \in \Lambda_{\bullet}$, $1\le i\le k$. 
The $|_{\bullet}$-\textit{graded NC filtration} of $\Lambda_{\bullet}$ is the decreasing 
filtration
\begin{align}\label{NCfilt}
\Lambda_{\bullet}=F^0\Lambda_{\bullet} \supset F^1 \Lambda_{\bullet}
 \supset \cdots
\supset F^{d} \Lambda_{\bullet} \supset \cdots
\end{align}
where $F^d \Lambda_{\bullet}$ is the two-sided $|_{\bullet}$-graded 
ideal of $\Lambda_{\bullet}$
defined by
\begin{align*}
F^d \Lambda_{\bullet} \cneq 
\sum_{m\ge 0}\sum_{i_1+\cdots+i_m=m+d}\Lambda_{\bullet}
 \cdot 
\Lambda_{\bullet, i_1}^{\rm{Lie}} \cdot \Lambda_{\bullet} \cdot 
\cdots \cdot \Lambda_{\bullet, i_m}^{\rm{Lie}} \cdot \Lambda_{\bullet}. 
\end{align*}
Note that
$\Lambda_{\bullet}/F^1 \Lambda_{\bullet}$ is the abelization 
$\Lambda^{ab}_{\bullet}$ of $\Lambda_{\bullet}$,
which is a super commutative $|_{\bullet}$-graded algebra written as 
\begin{align}\label{abeliz}
\Lambda^{ab}_{\bullet}=R^{ab} \otimes S(W_{\bullet}). 
\end{align}
We set $\Lambda^{\le d}_{\bullet} \cneq \Lambda_{\bullet}/F^{d+1} 
\Lambda_{\bullet}$, 
and $N^{\le d}_{\bullet} \cneq \Lambda^{\le d}_{\bullet} 
\otimes_{\Lambda_{\bullet}}N_{\bullet}$
for a
graded left $\Lambda_{\bullet}$-module $N_{\bullet}$.  
By the definition of the filtration (\ref{NCfilt}), 
the subquotient 
\begin{align}\label{grFlam}
\gr_{F}(\Lambda_{\bullet}) \cneq 
\bigoplus_{d\ge 0} 
F^d \Lambda_{\bullet}/F^{d+1} \Lambda_{\bullet}
\end{align}
is a bi-graded algebra: 
it is a direct sum of 
\begin{align*}
\gr_F(\Lambda_{\bullet})_{e}^{d}
\cneq \left(F^d \Lambda_{\bullet}/F^{d+1} \Lambda_{\bullet}\right)_{e}
\end{align*} where 
$e$ is the $|_{\bullet}$-grading, and 
we call 
the degree $d$ as $|^{\bullet}$-\textit{grading}.

\subsection{Quasi NC structures}
In the notation of the previous subsection, 
suppose that  
$W_{\bullet}=0$ so that 
$\Lambda_{\bullet}=R$ holds. 
We recall some notions on NC algebras following~\cite{Kap17}. 
\begin{defi}
(i)
An algebra $R$
is called NC nilpotent of degree $d$
\emph{(}resp.~NC nilpotent\emph{)}
if $F^{d+1}R=0$
\emph{(}resp.~$F^n R=0$ for $n\gg 0$\emph{)}.  

(ii) An algebra $R$ is called \textit{NC complete}
if the following natural map is an isomorphism
\begin{align*}
R \to R_{[\hspace{-0.5mm}[ab]\hspace{-0.5mm}]}
 \cneq \lim_{\longleftarrow} R^{\le d}. 
\end{align*}
(iii) An NC nilpotent algebra $R$ is 
called of finite type if $R^{ab}$ is a 
finitely generated $\mathbb{C}$-algebra and 
each $\gr_F^d(R)$ is a finitely 
generated $R^{ab}$-module. 
\end{defi}
Let $R$ be an NC complete algebra. 
For any multiplicative set 
$S \subset R^{ab}$ without zero divisor, 
its pull-back
by the natural surjection $R^{\le d} \twoheadrightarrow
 R^{ab}$
determines the multiplicative set 
in $R^{\le d}$, which 
satisfies the Ore localization condition (cf.~\cite[Proposition~2.1.5]{Kap17}).
In particular, 
one can define the localization 
$S^{-1}R^{\le d}$
of $R^{\le d}$ by $S$. 
 Therefore, similarly to the case of usual affine 
schemes, the NC nilpotent algebra
$R^{\le d}$ determines the sheaf of 
algebras $\widetilde{R}^{\le d}$
on $\Spec R^{ab}$
(cf.~\cite[Definition~2.2.2]{Kap17}). 
Namely, 
the topological basis 
of $\Spec R^{ab}$
is given by 
\begin{align*}
U_f \cneq \{\mathfrak{p} \in \Spec R^{ab} : 
f \notin \mathfrak{p}\}
\end{align*}
and 
$\widetilde{R}^{\le d}$ is the sheafication of the 
presheaf
$U_f \mapsto (f)^{-1} R^{\le d}$, 
where $(f)$ is the multiplicative 
set $\{f^{n} : n\ge 0\}$ in $R^{ab}$. 
Similarly, for any left $R^{\le d}$-module 
$P$, the sheaf $\widetilde{P}$ is 
defined to be the sheafication of the presheaf
\begin{align*}
U_f \mapsto (f)^{-1} R^{\le d} \otimes_{R^{\le d}}
P. 
\end{align*}
The ringed space
\begin{align*}
\Spf R \cneq 
(\Spec R^{ab}, \widetilde{R}), \ 
\widetilde{R} \cneq 
\lim_{\longleftarrow}\widetilde{R}^{\le d}
\end{align*}
is called 
an \textit{affine NC scheme}, or 
an \textit{affine NC structure} 
on the affine scheme $\Spec R^{ab}$. 
The sheaf $\widetilde{R}$ is determined by the localization
for 
a multiplicative set $S \subset R^{ab}$, 
given by 
\begin{align*}
S^{-1} R \cneq \lim_{\longleftarrow}
S^{-1}R^{\le d}. 
\end{align*}
\begin{defi}\emph{(\cite{Kap17})}
A ringed space is called an NC scheme if it is 
locally isomorphic to affine NC schemes. 
\end{defi}
For a scheme $M$, an \textit{NC structure}
is an NC scheme $(M, \oO_M^{\n})$ with 
$(\oO_M^{\n})^{ab}=\oO_M$. 
In~\cite{Todnc}, a weaker 
notion of the NC structures was considered: 
\begin{defi}\emph{(\cite{Todnc})}\label{def:QNC}
Let $M$ be a commutative scheme. 
A quasi NC structure on $M$
consists of an affine 
open cover $\{V_i\}_{i\in \mathbb{I}}$
of $M$, 
affine NC structures
$V_i^{\n}=(V_{i}, \oO_{V_i}^{\rm{nc}})$
on $V_i$
for each $i\in \mathbb{I}$, 
and isomorphisms of NC schemes
\begin{align}\notag
\phi_{ij} \colon 
(V_{ij}, \oO_{V_j}^{\rm{nc}}|_{V_{ij}})
\stackrel{\cong}{\to}
(V_{ij}, \oO_{V_i}^{\rm{nc}}|_{V_{ij}})
\end{align}
satisfying $\phi_{ij}^{ab}=\id$. 
\end{defi}
Let $\nN_{d}$ be the category of NC nilpotent algebras
of degree $d$, and $\nN$ the category of NC nilpotent algebras. 
An exact sequence 
\begin{align}\label{central}
0 \to J \to R_1 \to R_2 \to 0
\end{align}
in $\nN$
is called a \textit{central extension} if $J^2=0$ and 
$J$ lies in the center of $R_1$. 
\begin{defi}
(i) 
An NC nilpotent algebra $R$ 
of degree $d$ is called $d$-smooth if 
it is of finite type and the functor
\begin{align*}
h_{R} \cneq \Hom(R, -) \colon \nN \to \sS et
\end{align*}
is formally $d$-smooth, i.e. 
for any central extension (\ref{central}) in $\nN_d$, 
the map $h_{R}(R_1) \to h_{R}(R_2)$ is surjective. 

(ii) An NC complete algebra $R$ is called smooth if 
$R^{\le d}$ is $d$-smooth for any $d\ge 0$. 
\end{defi}
A quasi NC structure in Definition~\ref{def:QNC} is called smooth 
if each $U_i^{\n}$ is written as 
$U_i^{\n}=\Spf R_i$ for 
a smooth algebra $R_i$. 
If $M$ admits a smooth quasi NC structure, then $M$ must be smooth. 
Conversely, any smooth variety admits a smooth quasi 
NC structure by~\cite[Theorem~1.6.1]{Kap17}.

\subsection{Quasi NCDG structures}
Let 
$R$ be an NC complete algebra and 
$\Lambda_{\bullet}$
a graded algebra 
given by (\ref{Lambda:bull}). 
Suppose that there is a degree one differential
\begin{align*}
Q \colon \Lambda_{\bullet} \to \Lambda_{\bullet +1}
\end{align*}
giving a dg-algebra structure on $\Lambda_{\bullet}$. 
By the Leibniz rule, 
the differential $Q$ preserves the 
filtration (\ref{NCfilt}), 
hence we have the induced
dg-algebra 
$(\Lambda_{\bullet}^{\le d}, Q^{\le d})$. 
Note that each $|_{\bullet}$-degree term 
of $\Lambda_{\bullet}^{\le d}$ is a left
$R^{\le d}$-module, 
and $Q^{\le d}$ is a left 
$R^{\le d}$-module homomorphism. 
Hence
we have the associated 
sheaf of dg-algebras
$\widetilde{\Lambda}_{\bullet}^{\le d}$ on 
$\Spec R^{ab}$, 
which is a complex of 
quasi-coherent left $\widetilde{R}^{\le d}$-modules, 
and the dg-ringed space
\begin{align*}
\Spf \Lambda_{\bullet}^{\le d} \cneq 
(\Spec R^{ab}, \widetilde{\Lambda}_{\bullet}^{\le d}). 
\end{align*}
We see that the above dg-ringed space is 
also locally written 
of the above form. 
\begin{lem}\label{lem:mult}
For any multiplicative set 
$S \subset R^{ab}$ without zero divisor, we have
the canonical isomorphism
\begin{align}\label{id:d}
S^{-1}R^{\le d} \otimes_{R^{\le d}} \Lambda_{\bullet}^{\le d}
\stackrel{\cong}{\to}
\left(S^{-1}R \ast T(W_{\bullet}) \right)^{\le d}. 
\end{align}
\end{lem}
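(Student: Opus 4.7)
The plan is to construct a natural map from LHS to RHS using the universal property of Ore localization, then verify it is an isomorphism by induction on the NC degree $d$. The inclusion $R \hookrightarrow S^{-1}R$ combined with $\mathrm{id}_{T(W_{\bullet})}$ induces, via the universal property of free products, an algebra homomorphism $\iota \colon R \ast T(W_{\bullet}) \to S^{-1}R \ast T(W_{\bullet})$ which preserves the NC filtration (since it respects Lie brackets), hence descends to $\iota^{\le d} \colon \Lambda_{\bullet}^{\le d} \to (S^{-1}R \ast T(W_{\bullet}))^{\le d}$. Every $s \in S$ has a lift to $S^{-1}R$ that is a unit there, and by the characterization of units via abelianization (\ref{abeliz}) it becomes a unit in the target of $\iota^{\le d}$. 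The Ore localization property (cf.\ \cite[Proposition~2.1.5]{Kap17}) then extends $\iota^{\le d}$ uniquely to the required map (\ref{id:d}). Surjectivity is immediate because $S^{-1}R$ and $W_{\bullet}$ generate the RHS as a $\mathbb{C}$-algebra, and both lie in the image.

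For injectivity I would induct on $d$. The base case $d=0$ reduces by (\ref{abeliz}) to the commutative identity $S^{-1}R^{ab} \otimes_{R^{ab}} (R^{ab} \otimes S(W_{\bullet})) \cong (S^{-1}R)^{ab} \otimes S(W_{\bullet})$. For the inductive step, I would compare the short exact sequences
\[
0 \to \gr^d_F \Lambda_{\bullet} \to \Lambda_{\bullet}^{\le d} \to \Lambda_{\bullet}^{\le d-1} \to 0
\]
and the analogous sequence on the RHS, each tensored with $S^{-1}R^{\le d}$ over $R^{\le d}$. Because the action of $R^{\le d}$ on $\gr^d_F \Lambda_{\bullet}$ factors through $R^{ab}$, localization is exact on these sequences, and the five lemma reduces the problem to the isomorphism
\[
S^{-1}R^{ab} \otimes_{R^{ab}} \gr^d_F(R \ast T(W_{\bullet})) \stackrel{\cong}{\to} \gr^d_F(S^{-1}R \ast T(W_{\bullet})).
\]

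The main obstacle is this last identity, which amounts to compatibility of the graded NC filtration of a free product with Ore localization of the first factor. Concretely, one must show that every iterated commutator in $S^{-1}R \ast T(W_{\bullet})$ that involves an inverse $\tilde{s}^{-1}$ (for a lift $\tilde{s}$ of some $s \in S$) can be reduced, using identities such as $[\tilde{s}^{-1}, x] = -\tilde{s}^{-1}[\tilde{s}, x]\tilde{s}^{-1}$, to an iterated commutator in $R \ast T(W_{\bullet})$ multiplied on both sides by elements of $S^{-1}R \ast T(W_{\bullet})$. Once this reduction is carried out, $\gr^d_F(R \ast T(W_{\bullet}))$ is seen to be a finitely presented module over $R^{ab} \otimes S(W_{\bullet})$ whose localization at $S \subset R^{ab}$ coincides with the corresponding graded piece on the RHS, completing the induction.
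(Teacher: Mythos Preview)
Your reduction via induction on $d$ and the five lemma matches the paper's proof exactly, including the use of flatness of $S^{-1}R^{\le d}$ over $R^{\le d}$ to obtain the localized short exact sequence. The divergence is entirely in how you handle the final identity
\[
S^{-1}R^{ab} \otimes_{R^{ab}} \gr_F^d(R \ast T(W_{\bullet})) \;\stackrel{\cong}{\longrightarrow}\; \gr_F^d(S^{-1}R \ast T(W_{\bullet})).
\]

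The paper does \emph{not} attack this by direct commutator manipulation. Instead it invokes the structural result (Lemma~\ref{lem:isom:Poi}, proved later in Subsection~\ref{subsec:via}) that for any smooth $R$ there is a canonical isomorphism $\gr_F(\Lambda_{\bullet})^{\bullet}_{\bullet} \cong P(\Lambda_{\bullet}^{ab})^{\bullet}_{\bullet}$ with the graded Poisson envelope of the abelianization. Applying this to both $R \ast T(W_{\bullet})$ and $S^{-1}R \ast T(W_{\bullet})$ reduces the question to
\[
S^{-1}R^{ab} \otimes_{R^{ab}} P(R^{ab} \otimes S(W_{\bullet}))^d \;\cong\; P(S^{-1}R^{ab} \otimes S(W_{\bullet}))^d,
\]
which is the statement that the Poisson envelope commutes with localization, established in \cite[Section~4.1]{Kap17}.

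Your direct approach has a genuine gap. The identity $[\tilde{s}^{-1}, x] = -\tilde{s}^{-1}[\tilde{s}, x]\tilde{s}^{-1}$ does let you rewrite commutators involving inverses, but iterating it produces expressions of the form $\tilde{s}^{-1}(\cdots)\tilde{s}^{-1}(\cdots)\tilde{s}^{-1}$ with multiple nested factors, and you have not shown that these land in $F^d$ modulo $F^{d+1}$ in a way that witnesses \emph{injectivity} (surjectivity you already have from generators). Your final sentence asserts that $\gr_F^d(R\ast T(W_{\bullet}))$ is finitely presented over $R^{ab}\otimes S(W_{\bullet})$ and that its localization agrees with the target, but this is precisely what is at stake; without an independent description of the module structure you are assuming the conclusion. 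The Poisson-envelope identification is exactly the missing ingredient that supplies that independent description.
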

\begin{proof}
Note that there exists a canonical morphism from the LHS 
to the RHS of (\ref{id:d})
by the universality of the localization. 
We prove the isomorphism (\ref{id:d}) by the induction of $d$. 
For $d=0$, the claim 
is obvious since both sides coincide with 
$S^{-1} R^{ab} \otimes S(W_{\bullet})$. 
Suppose that the isomorphism (\ref{id:d}) holds for 
$d\ge 0$. 
Since $S^{-1}R^{\le d+1}$ is a flat 
right $R^{\le d+1}$-module, we have the exact sequence
\begin{align}\notag
0 \to S^{-1} R^{ab} \otimes_{R^{ab}}
\gr_F(\Lambda_{\bullet})^d \to
S^{-1} R^{\le d+1} &\otimes_{R^{\le d+1}}
\Lambda_{\bullet}^{\le d+1} \\
\label{exact:lam}
&\to 
S^{-1}R^{\le d}\otimes_{R^{\le d}}
\Lambda_{\bullet}^{\le d} \to 0. 
\end{align}
On the other hand, we have the 
exact sequence
\begin{align}\notag
0 \to \gr_F(S^{-1}R \ast T(W_{\bullet}))^{d} 
\to&  \left( S^{-1}R \ast T(W_{\bullet}) \right)^{\le d+1} \\
\label{exact:lam2}
&\qquad \qquad
\to \left( S^{-1}R \ast T(W_{\bullet}) \right)^{\le d} \to 0. 
\end{align}
By the assumption of the induction, 
(\ref{exact:lam}), (\ref{exact:lam2}), and the five lemma, 
it is enough to show the isomorphism
\begin{align*}
S^{-1} R^{ab} \otimes_{R^{ab}}
\gr_F(\Lambda_{\bullet})^d 
\stackrel{\cong}{\to}\gr_F(S^{-1}R \ast T(W_{\bullet}))^{d}. 
\end{align*}
In Subsection~\ref{subsec:via}, we will 
see that 
the subquotients of the NC 
filtrations
are
described by  
Poisson envelopes. 
Using Lemma~\ref{lem:isom:Poi} 
in Subsection~\ref{subsec:via},
it is enough to show the isomorphism
\begin{align*}
S^{-1} R^{ab} \otimes_{R^{ab}} 
P(R^{ab} \otimes S(W_{\bullet}))^{d}
\stackrel{\cong}{\to}
P(S^{-1}R^{ab} \otimes S(W_{\bullet}))^{d}. 
\end{align*} 
The above isomorphism follows since 
taking the Poisson envelope commutes with 
the localization (cf.~\cite[Section~4.1]{Kap17}). 
\end{proof}
We define the following dg-ringed 
space
\begin{align}\label{affine:ncdg}
\Spf \Lambda_{\bullet} \cneq 
(\Spec R^{ab}, 
\widetilde{\Lambda}_{\bullet}), \ 
\widetilde{\Lambda}_{\bullet} \cneq 
\lim_{\longleftarrow} 
\widetilde{\Lambda}_{\bullet}^{\le d}. 
\end{align}
The sheaf $\widetilde{\Lambda}_{\bullet}$
is a sheaf of dg-algebras on $\Spec R^{ab}$, 
which is a complex of 
left $\widetilde{R}$-modules. 
Note that 
\begin{align}\label{af:com:dg}
\Spec \Lambda_{\bullet}^{ab} \cneq 
(\Spec R^{ab}, 
(\widetilde{\Lambda}_{\bullet})^{ab})
\end{align}
is an affine 
commutative dg-scheme. 
We 
call 
the dg-ringed space (\ref{affine:ncdg}) 
as an \textit{affine NCDG scheme}
or 
an \textit{affine
NCDG structure} on
the commutative dg-scheme (\ref{af:com:dg}). 
We call it \textit{smooth} if the ungraded 
algebra $R$ is smooth. 
In this case, (\ref{af:com:dg}) is a smooth affine 
commutative dg-scheme. 
The \textit{zero-th truncation} of (\ref{affine:ncdg}) 
is defined by 
\begin{align}\notag
\tau_0(\Spf \Lambda_{\bullet}) \cneq 
(\Spec \hH_0(\Lambda_{\bullet}^{ab}), 
\lim_{\longleftarrow}\hH_0(\widetilde{\Lambda}^{\le d}_{\bullet})). 
\end{align}
Since $\hH_0(\Lambda_{\bullet}^{\le d})=\hH_0(\Lambda_{\bullet})^{\le d}$, 
we have 
$\tau_0(\Spf \Lambda_{\bullet})=\Spf \hH_0(\Lambda_{\bullet})$, 
which is an affine  
NC structure on $\Spec \hH_0(\Lambda_{\bullet}^{ab})$. 
\begin{exam}\label{exam:xy}
Let $R=\mathbb{C}[x]$ and 
set $W_{\bullet}=W_{-1}=\mathbb{C} \cdot y$. 
Then $\Lambda_{\bullet}=\mathbb{C}\langle x, y \rangle$
where $x$ is degree zero and $y$ is degree $-1$. 
For $n\ge 1$, let $Q$ be the differential given by 
\begin{align}\label{ex:Cxy}
Q \colon \Lambda_{\bullet} \to \Lambda_{\bullet+1}, \ 
Q(x)=0, \ Q(y)=x^n. 
\end{align}
We have the associated affine NCDG scheme
\begin{align}\label{ex:ncdg}
\Spf \Lambda_{\bullet}=
(\Spec \mathbb{C}[x], \widetilde{\mathbb{C}\langle x, y \rangle})
\end{align}
which is an affine NCDG structure on 
the commutative dg-scheme 
\begin{align}\label{ex:cdg}
\Spec \Lambda_{\bullet}^{ab}=
(\Spec \mathbb{C}[x], \widetilde{\mathbb{C}[x]} \oplus 
\widetilde{\mathbb{C}[x]}y).
\end{align}
The zero-th truncation of (\ref{ex:ncdg}) is 
$\Spec \mathbb{C}[x]/(x^n)$.  
\end{exam}
We define the following NCDG analogue of NC schemes. 
\begin{defi}
A dg-ringed space $(N, \oO_{N, \bullet}^{\n})$
is called an NCDG scheme if it is locally 
isomorphic to affine NCDG schemes. 
\end{defi}
By Lemma~\ref{lem:mult}, 
for any open subset $U \subset \Spec R^{ab}$, 
the restriction $(U, \widetilde{\Lambda}_{\bullet}|_{U})$
is an NCDG scheme. 
We can also define the NCDG analogue of Definition~\ref{def:QNC}: 
\begin{defi}\label{def:QNC2}
Let $(N, \oO_{N, \bullet})$ be a commutative dg-scheme. 
A quasi NCDG structure on $(N, \oO_{N, \bullet})$
consists of an affine 
open cover $\{U_i\}_{i\in \mathbb{I}}$
of $N$, 
affine NCDG structures
$(U_{i}, \oO_{U_i, \bullet}^{\rm{nc}})$
on $(U_i, \oO_{N, \bullet}|_{U_i})$
for each $i\in \mathbb{I}$, 
and isomorphisms
of NCDG schemes
\begin{align}\label{phiij}
\phi_{ij, \bullet} \colon 
(U_{ij}, \oO_{U_j, \bullet}^{\rm{nc}}|_{U_{ij}})
\stackrel{\cong}{\to}
(U_{ij}, \oO_{U_i, \bullet}^{\rm{nc}}|_{U_{ij}}). 
\end{align}
satisfying $(\phi_{ij, \bullet})^{ab}=\id$. 
 \end{defi}
A quasi NCDG structure in Definition~\ref{def:QNC2}
is called \textit{smooth} if each $(U_{i}, \oO_{U_i, \bullet}^{\rm{nc}})$
is smooth.
If $(N, \oO_{N, \bullet})$ admits a smooth quasi NCDG structure, then 
it must be smooth. 
If the isomorphisms (\ref{phiij})
satisfy the cocycle condition, 
the sheaves of dg-algebras
$\oO_{U_i, \bullet}^{\n}$ glue to give 
the sheaf of dg-algebras $\oO_{N, \bullet}^{\n}$ on $N$. 
In this case, a pair $(N, \oO_{N, \bullet}^{\n})$
is an NCDG scheme, and 
called a \textit{NCDG structure} on $(N, \oO_{N, \bullet})$. 
Let $M \subset N$ be the closed subscheme 
given by the zero-th truncation of $(N, \oO_{N, \bullet})$, 
and set $V_i=M \cap U_i$
for the quasi NCDG structure
in Definition~\ref{def:QNC2}. 
By the definition, 
we have the induced quasi 
NC structure  
\begin{align}\label{trunc}
\{V_i^{\n}=\tau_0(U_i, \oO_{U_i, \bullet}^{\n})\}_{i\in \mathbb{I}}, \ 
\hH_0(\phi_{ij, \bullet}) \colon 
V_j^{\n}|_{V_{ij}} \stackrel{\cong}{\to}
V_i^{\n}|_{V_{ij}}
\end{align}
on $M$
in the sense of Definition~\ref{def:QNC}. 
We call the quasi NC structure (\ref{trunc}) as 
the \textit{zero-th truncation} of the quasi NCDG structure 
in Definition~\ref{def:QNC2}.

\subsection{NC virtual structure sheaves}
Let $(N, \oO_{N, \bullet})$ be a smooth commutative 
dg-scheme, 
which admits  
a smooth quasi NCDG structure in Definition~\ref{def:QNC2}. 
For simplicity, suppose that 
the quasi NCDG structure glues to give an NCDG structure
$(N, \oO_{N, \bullet}^{\n})$ on $(N, \oO_{N, \bullet})$.
Then 
its zero-th truncation is an NC 
structure $M^{\n}=(M, \oO_M^{\n})$ on $M$. As an analogy of 
the identity $\oO_M^{\rm{vir}}
=[\hspace{-0.5mm}[\oO_{N, \bullet}]\hspace{-0.5mm}]$
in Theorem~\ref{thm:vir},  
one would like to define the `NC virtual structure sheaf'
of $M^{\n}$ 
to be
\begin{align}\label{ncvir:1}
\oO_{M}^{\rm{ncvir}} =
\sum_{i\in \mathbb{Z}} (-1)^i [\hH_i(\oO_{N, \bullet}^{\n})]. 
\end{align}
Note that each $\hH_i(\oO_{N, \bullet}^{\n})$
is a quasi coherent left $\oO_M^{\n}$-module. 
Hence if they are coherent and vanish for $\lvert i \rvert \gg 0$, 
then the sum (\ref{ncvir:1}) 
makes sense as an element
of $K_0(M^{\n})$, where 
$K_0(M^{\n})$ is the Grothendieck group of 
the category of coherent left $\oO_M^{\n}$-modules. 
However in general, the 
sum (\ref{ncvir:1}) is an infinite sum 
even if 
$(N, \oO_{N, \bullet})$ is a $[0, 1]$-manifold. 
For example, the cohomologies of the 
complex (\ref{ex:Cxy}) 
are not bounded while the commutative dg-scheme (\ref{ex:cdg})
is a
$[0, 1]$-manifold. 

Instead of
$(N, \oO_{N, \bullet}^{\n})$, consider 
the NCDG scheme $(N, (\oO_{N, \bullet}^{\n})^{\le d})$
for $d \in \mathbb{Z}_{\ge 0}$. It is 
regarded as a $d$-smooth dg-resolution of
the
$d$-th order NC thickening $(M^{\n})^{\le d}=(M, (\oO_M^{\n})^{\le d})$ 
of $M$. 
Moreover we have the following: 
\begin{lem}
If $(N, \oO_{N, \bullet})$ is a 
$[0, 1]$-manifold, then  
$(\oO_{N, \bullet}^{\n})^{\le d}$ is 
quasi-isomorphic to a 
bounded complex. 
\end{lem}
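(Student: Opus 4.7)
The plan is to reduce to the local setting and exploit the finite length of the NC filtration. Since the statement is local on $N$, I would first choose an affine open $U \subset N$ on which the quasi NCDG structure is trivialized, giving $(\oO_{N, \bullet}^{\n})^{\le d}|_U \cong \widetilde{\Lambda}_\bullet^{\le d}$ for $\Lambda_\bullet = R \ast T(W_\bullet)$, with $R$ a smooth NC complete algebra and $W_\bullet$ a finite-dimensional graded vector space concentrated in degrees $[-k, -1]$. Since $\Lambda_\bullet$ is generated in non-positive $|_\bullet$-degrees, we have $\hH_i = 0$ for $i > 0$ automatically; only lower boundedness requires proof, and once this is established the complex is quasi-isomorphic to its standard truncation to a bounded degree range.

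I would proceed by induction on $d$. For $d = 0$, $\Lambda_\bullet^{\le 0} = \Lambda_\bullet^{ab}$ coincides with $\oO_{N, \bullet}|_U$, whose cohomology is bounded by~\cite[Theorem~2.2.2]{CFK}. For the induction step, since $Q$ is a derivation and the NC filtration is defined via iterated super-brackets, $Q^{\le d}$ preserves the filtration and descends to the subquotient, giving a short exact sequence
\[
0 \to \gr_F^d \Lambda_\bullet \to \Lambda_\bullet^{\le d} \to \Lambda_\bullet^{\le d-1} \to 0
\]
of complexes. This reduces the problem to showing that $\gr_F^d \Lambda_\bullet$ has bounded cohomology.

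For the boundedness of $\gr_F^d \Lambda_\bullet$, I would appeal to the Poisson envelope description (forthcoming Lemma~\ref{lem:isom:Poi}) identifying $\gr_F^d \Lambda_\bullet$ with the $d$-th piece of the Poisson envelope of $\Lambda_\bullet^{ab}$. The crucial local observation is that, since $R$ is generated in the NC-complete sense by finitely many degree-zero elements and $W_\bullet$ is finite-dimensional in degrees $[-k, -1]$, the generators of this Poisson envelope piece as a $\Lambda_\bullet^{ab}$-module are given by iterated super-brackets of depth $d$ of these algebra generators, and thus lie in the bounded $|_\bullet$-degree range $[-(d+1)k, 0]$. Hence $\gr_F^d \Lambda_\bullet$ is a dg-module over $\Lambda_\bullet^{ab}$ which, as a graded $\Lambda_\bullet^{ab}$-module, is locally free of finite rank with generators in a bounded range of degrees.

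The main obstacle is to conclude from this presentation that the cohomology of $\gr_F^d \Lambda_\bullet$ is bounded; the induced differential is a derivation over $\Lambda_\bullet^{ab}$ rather than being $\Lambda_\bullet^{ab}$-linear, so one cannot directly reduce to the boundedness of $\hH_\ast(\Lambda_\bullet^{ab})$. I would handle this via an auxiliary filtration indexed by the total bracket depth used in each term, whose associated graded decomposes as a finite direct sum of degree-shifts of $\Lambda_\bullet^{ab}$ within the bounded range above; since each shift has bounded cohomology by the $d = 0$ case, the auxiliary spectral sequence converges and yields bounded cohomology for $\gr_F^d \Lambda_\bullet$, completing the induction.
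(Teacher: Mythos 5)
Your argument is correct and follows essentially the same route as the paper: pass to the subquotients $\gr_F(\oO_{N,\bullet}^{\n})^j_{\bullet}$ of the NC filtration for $0\le j\le d$, observe via the Poisson-envelope description that each is a finitely generated dg-$\oO_{N,\bullet}$-module with generators in a bounded range of $|_{\bullet}$-degrees, and conclude boundedness from the $[0,1]$-manifold hypothesis. Your auxiliary filtration in the last step is just an unpacking of the fact, stated earlier in the paper with reference to \cite[Theorem~2.2.2]{CFK}, that any finitely generated dg-$\oO_{N,\bullet}$-module has bounded cohomology, which the paper cites rather than reproves.
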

\begin{proof}
The subquotient
of the NC 
filtration of $(\oO_{N, \bullet}^{\n})^{\le d}$
is the direct sum of 
$\gr_F(\oO_{N, \bullet}^{\n})^j_{\bullet}$ 
for $0\le j\le d$. 
It is easy to see that $\gr_F(\oO_{N, \bullet}^{\n})^j_{\bullet}$ is a 
finitely generated dg $\oO_{N, \bullet}$-module, hence 
bounded if $(N, \oO_{N, \bullet})$ is a $[0, 1]$-manifold. 
Therefore the lemma holds. 
\end{proof} 
By the above lemma, the sum 
\begin{align}\label{ncvir:2}
(\oO_{M}^{\rm{ncvir}})^{\le d} =
\sum_{i\in \mathbb{Z}} (-1)^i [\hH_i((\oO_{N, \bullet}^{\n})^{\le d})]
\end{align}
makes sense in $K_0((M^{\n})^{\le d})$, 
which is identified as an element of $K_0(M)$
since any left $(\oO_M^{\n})^{\le d}$-module 
has a finite filtration whose subquotients
are $\oO_M$-modules. 
We call (\ref{ncvir:2}) as 
\textit{$d$-th NC virtual structure sheaf} 
of the $d$-th NC thickening $(M^{\n})^{\le d}$
of $M$. 

In general, 
a quasi NCDG structure in Definition~\ref{def:QNC2}
may not glue to 
give an NCDG structure. 
In such a case, the 
zero-th truncation (\ref{trunc})
only gives a quasi NC 
structure $M^{\n}$ on $M$. 
We generalize the 
above notion of $d$-th NC virtual structure 
sheaves to the quasi NC structure $M^{\n}$. 
Even if the isomorphisms (\ref{phiij}) 
do not satisfy the cocycle condition, 
we have the following lemma: 
\begin{lem}
The isomorphisms (\ref{phiij})
induce the isomorphisms
\begin{align*}
\gr_F(\phi_{ij, \bullet})^d_{\bullet} \colon 
\gr_F(\oO_{U_j, \bullet}^{\rm{nc}}|_{U_{ij}})^d_{\bullet}
\stackrel{\cong}{\to}
\gr_F(\oO_{U_j, \bullet}^{\rm{nc}}|_{U_{ij}})^d_{\bullet}
\end{align*}
of dg-$\oO_{N, \bullet}|_{U_{ij}}$-modules
satisfying the cocycle condition. 
\end{lem}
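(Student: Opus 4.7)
The plan is to identify each graded piece with a universal object depending only on the abelianization, so that the hypothesis $\phi_{ij,\bullet}^{ab} = \id$ forces $\gr_F(\phi_{ij,\bullet})^d_\bullet$ to be the identity under canonical identifications, whence the cocycle condition becomes automatic.

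First, I would observe that the NC filtration $F^\bullet$ in (\ref{NCfilt}) is defined intrinsically via iterated commutators, so any algebra homomorphism preserves it. Applied to $\phi_{ij,\bullet}$, this yields induced isomorphisms $\gr_F(\phi_{ij,\bullet})^d_\bullet$ on the graded pieces. Because $\phi_{ij,\bullet}$ is a morphism of NCDG schemes it commutes with the differential, and because $\phi_{ij,\bullet}^{ab} = \id$ it is $\oO_{N,\bullet}|_{U_{ij}}$-linear on graded pieces; thus $\gr_F(\phi_{ij,\bullet})^d_\bullet$ is a morphism of dg-$\oO_{N,\bullet}|_{U_{ij}}$-modules.

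Second, I would invoke the Poisson envelope description of $\gr_F$ promised in Subsection~\ref{subsec:via} and already used in the proof of Lemma~\ref{lem:mult}: for an NCDG algebra $\Lambda_\bullet = R \ast T(W_\bullet)$ there is a canonical isomorphism
\[
\gr_F(\Lambda_\bullet) \cong P(\Lambda_\bullet^{ab})
\]
of bi-graded dg-algebras, where $P$ is the Poisson envelope functor. The crucial property is naturality: a morphism $\phi$ of NCDG algebras induces, under this identification, the map $P(\phi^{ab})$ on graded pieces. Applying this to $\phi_{ij,\bullet}$, both $\gr_F(\oO_{U_i,\bullet}^{\rm{nc}}|_{U_{ij}})^d_\bullet$ and $\gr_F(\oO_{U_j,\bullet}^{\rm{nc}}|_{U_{ij}})^d_\bullet$ are canonically identified with $P(\oO_{N,\bullet}|_{U_{ij}})^d$, and $\gr_F(\phi_{ij,\bullet})^d_\bullet$ corresponds to $P(\id)^d = \id$. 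The cocycle relation on $U_{ijk}$ then reduces to $\id \circ \id = \id$, which holds trivially.

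The main obstacle is to rigorously establish the functoriality of the Poisson envelope identification in the NCDG setting, including its compatibility with the differential $Q$; this is essentially the content of the forthcoming Lemma~\ref{lem:isom:Poi}, and follows from the universal property of $P$ together with the Leibniz rule for $Q$. As a fallback that avoids explicit use of Poisson envelopes, one can argue directly by induction on $d$: an algebra automorphism $\theta$ of an NC algebra with $\theta^{ab} = \id$ satisfies $\theta(x) - x \in F^{d+1}$ for every $x \in F^d$, as one verifies by expanding $\theta([a,b]) = [\theta(a), \theta(b)]$ and using the easily-checked inclusion $[F^a, F^b] \subset F^{a+b+1}$. Applying this to the local composite $\phi_{ik,\bullet}^{-1} \circ \phi_{ij,\bullet} \circ \phi_{jk,\bullet}$, after choosing a local trivialization identifying source and target algebras over $U_{ijk}$, gives the cocycle condition on graded pieces.
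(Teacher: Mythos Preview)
Your proposal is correct, and your fallback argument is precisely the paper's route: the paper simply cites \cite[Lemma~2.2]{Todnc} for the elementary fact that any graded-algebra automorphism $\phi_\bullet$ with $\phi_\bullet^{ab}=\id$ induces the identity on $\gr_F$, and then says the lemma ``obviously follows.'' Your inductive sketch (using $[F^a,F^b]\subset F^{a+b+1}$ to show $\theta(x)-x\in F^{d+1}$ for $x\in F^d$) reproduces that cited lemma, and applying it to the cocycle composite $\phi_{ik,\bullet}^{-1}\circ\phi_{ij,\bullet}\circ\phi_{jk,\bullet}$ is exactly how the conclusion is drawn.

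Your main approach via the Poisson envelope is a genuinely different and somewhat more structural argument. Rather than only checking that the cocycle defect vanishes on graded pieces, you canonically identify each $\gr_F(\oO_{U_i,\bullet}^{\mathrm{nc}}|_{U_{ij}})$ with the functorial object $P(\oO_{N,\bullet}|_{U_{ij}})$ and deduce that every $\gr_F(\phi_{ij,\bullet})$ is individually the identity. This buys you a clean global description of the glued object $\gr_F(\oO_{N,\bullet}^{\mathrm{nc}})^d_\bullet$ as $P(\oO_{N,\bullet})^d_\bullet$ from the outset, which the paper only establishes later (in the proof of Theorem~\ref{thm:formula}) via Lemma~\ref{lem:isom:Poi}. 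The cost is that you must verify naturality of the isomorphism in Lemma~\ref{lem:isom:Poi}, which, as you note, follows from the universal property of the Poisson envelope but is not stated explicitly in the paper. The paper's approach avoids this by staying at the level of a single algebra automorphism, at the price of a slightly less transparent reason why the graded pieces glue.
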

\begin{proof}
Let $\Lambda_{\bullet}$ be a graded algebra and 
$\phi_{\bullet} \colon \Lambda_{\bullet} \to \Lambda_{\bullet}$
an isomorphism 
of graded algebras
satisfying $\phi^{ab}_{\bullet}=\id$. 
Then the 
induced isomorphism 
$\gr_F(\phi_{\bullet}) 
\colon \gr_F(\Lambda_{\bullet}) \to \gr_F(\Lambda_{\bullet})$
is the identity 
by~\cite[Lemma~2.2]{Todnc}. 
The lemma obviously follows from this fact.  
\end{proof}
By the above lemma, 
the sheaves
$\gr_F(\oO_{U_i, \bullet}^{\n})^d_{\bullet}$ glue to 
give the global 
dg-$\oO_{N, \bullet}$-module on $N$
\begin{align}\label{gr:global}
\gr_F(\oO_{N, \bullet}^{\n})^d_{\bullet} \in 
\rm{dg} \ 
\oO_{N, \bullet} \mbox{-}\modu.
\end{align}
Since (\ref{gr:global}) is finitely generated, 
we have the following element
\begin{align*}
[\hspace{-0.5mm}[\gr_F(\oO_{N, \bullet}^{\n})^d_{\bullet}]\hspace{-0.5mm}]
 \in K_0(M). 
\end{align*}
Hence
the following definition makes sense: 
\begin{defi}\label{defi:ncvir}
Let
$(N, \oO_{N, \bullet})$
be a $[0, 1]$-manifold and $M \subset N$ its zero-th truncation. 
Suppose that it admits a quasi NCDG structure, 
and 
let $M^{\n}$ be its zero-th truncation (\ref{trunc}). 
The $d$-th NC virtual structure sheaf of $M^{\n}$ is defined
to be
\begin{align}\label{ncvir:3}
(\oO_{M}^{\rm{ncvir}})^{\le d} \cneq 
\sum_{j=0}^{d}[\hspace{-0.5mm}[\gr_F(\oO_{N, \bullet}^{\n})^j_{\bullet}]
\hspace{-0.5mm}] \in K_0(M). 
\end{align}
\end{defi}
\begin{rmk}
If a quasi NCDG structure gives 
the NCDG structure, then the class (\ref{ncvir:3})
coincides with (\ref{ncvir:2}) by taking the NC filtration of 
$(\oO_{N, \bullet}^{\n})^{\le d}$. 
\end{rmk}

\begin{rmk}
By Theorem~\ref{thm:vir}, 
for $d=0$ we have the identity 
$(\oO_{M}^{\rm{ncvir}})^{\le 0}=\oO_M^{\rm{vir}}$, 
where $\oO_M^{\rm{vir}}$ is the commutative 
virtual structure sheaf given in (\ref{Kth:vir}). 
\end{rmk}

\begin{exam}
In the situation of Example~\ref{exam:xy}, 
let 
$M=\Spec \mathbb{C}[x]/(x^n)$. 
By definition, the
$d$-th NC virtual structure sheaf of 
$M$ is 
\begin{align}\label{ex:ncvir}
(\oO_M^{\rm{ncvir}})^{\le d}=\sum_{j=0}^{d}
\left[ \hspace{-0.5mm} \left[
\gr_F \left( \mathbb{C}\langle x, y \rangle \right)^j \right] \hspace{-0.5mm} 
\right]. 
\end{align}
By the identification 
$K_0(M)=\mathbb{Z}$, we have 
$\oO_M^{\rm{vir}}=[\hspace{-0.5mm}[\mathbb{C}[x, y]]\hspace{-0.5mm}]=n$ and 
\begin{align*}
(\oO_M^{\rm{ncvir}})^{\le 1}&
=[\hspace{-0.5mm}[\mathbb{C}[x, y]\oplus \mathbb{C}[x, y][x, y]
\oplus \mathbb{C}[x, y][y, y]]\hspace{-0.5mm}] \\
&=n. 
\end{align*}
In general, one can show that 
(\ref{ex:ncvir}) coincides with $n$ for all $d\ge 1$
(cf.~Corollary~\ref{cor:vir0}). 
\end{exam}

\section{Description of NC virtual structure sheaves}\label{sec:Des}
In this section, we give  
an explicit description of the NC virtual structure sheaves
in terms of the perfect obstruction theory (\ref{perfect}), 
and prove Theorem~\ref{thm:intro:perf}. 
\subsection{Graded Poisson envelope}
Let $\Lambda_{\bullet}$ be a graded algebra (\ref{Lambda:bull}), 
and take the NC filtration (\ref{NCfilt}).
For $x \in (F^d \Lambda_{\bullet})_{e}$, 
$x' \in (F^{d'} \Lambda_{\bullet})_{e'}$, 
it is easy to see that
\begin{align*}
x \cdot x' \in 
(F^{d+d'} \Lambda_{\bullet})_{e+e'}, \ 
[x, x'] \in (F^{d+d'+1} \Lambda_{\bullet})_{e+e'}. 
\end{align*} 
Here $e, e'$ are $|_{\bullet}$-gradings. 
Therefore the bracket $[-, -]$ induces the pairing
\begin{align*}
\{-, -\} \colon 
\gr_F(\Lambda_{\bullet})_{e}^d 
\times \gr_F(\Lambda_{\bullet})_{e'}^{d'} \to 
\gr_F(\Lambda_{\bullet})_{e+e'}^{d+d'+1}
\end{align*}
which is super anti-symmetric
with respect to the 
$\lvert_{\bullet}$-grading. 
In general, 
we introduce the following definition: 
\begin{defi}
(i) 
A $\lvert_{\bullet}$-graded Poisson algebra 
is a 
triple
\begin{align}\label{triple}
(P_{\bullet}, \cdot, \{-, -\})
\end{align}
where $(P_{\bullet}, \cdot)$
is a super commutative graded algebra, 
$\{-, -\}$ is a grade preserving, 
super anti-symmetric 
 pairing
\begin{align}\label{pairing}
\{-, -\} \colon P_{\bullet} \times P_{\bullet} \to 
P_{\bullet}
\end{align}
satisfying the 
super Jacobi identity 
and $\{x, -\}$ is a super derivation for any $x \in P_{\bullet}$. 

(ii) A $|_{\bullet}^{\bullet}$-graded Poisson algebra is a 
$\lvert_{\bullet}$-graded Poisson algebra 
$(P_{\bullet}, \cdot, \{-, -\})$
endowed with another grading 
(called $\lvert^{\bullet}$-grading)
$P_{\bullet}^{\bullet}$
such that
the multiplication $\cdot$ 
preserves the $\lvert^{\bullet}$-degree, and 
 the paring (\ref{pairing})
sends $P_{e}^{d} \times P_{e'}^{d'}$ to 
$P_{e+e'}^{d+d'+1}$. 
\end{defi}
A $\lvert_{\bullet}$-graded Poisson algebra
$P_{\bullet}$
satisfying 
$P_i=0$ for $i\neq 0$
is nothing but a 
\textit{Poisson algebra}. 
It is easy to see that the algebra (\ref{grFlam})
is a $|_{\bullet}^{\bullet}$-graded Poisson algebra. 
Also a $|_{\bullet}^{\bullet}$-graded 
Poisson algebra $(P_{\bullet}^{\bullet}, \cdot, \{-, -\})$
is interpreted as a
$|_{\bullet}$-graded Poisson algebra by forgetting 
the $\lvert^{\bullet}$-degree. 
We consider the functor
\begin{align}\label{Pois:fun}
(\rvert_{\bullet}\mbox{-graded Poisson algebras})
\to (\mbox{super commutative graded algebras})
\end{align}
defined by 
forgetting (\ref{pairing}), i.e. it 
sends 
the triple (\ref{triple})
to the super commutative graded algebra
$(P_{\bullet}, \cdot)$. 
The functor (\ref{Pois:fun}) 
extends the
forgetting functor from the 
category of Poisson algebras to
the category of commutative algebras. 
It is well-known that the latter functor 
has a left adjoint, called \textit{Poisson envelope}. 
We see that the graded version of the 
similar construction 
gives the left adjoint of (\ref{Pois:fun}). 

For a graded vector space $W_{\bullet}$,
let 
\begin{align}\label{def:LW}
L(W_{\bullet}) \subset 
T(W_{\bullet})
\end{align} 
be the $|_{\bullet}$-graded 
super Lie algebra generated by $W_{\bullet}$. 
It is a direct sum of $L_{e}^{d}(W_{\bullet})$, where 
$e$ is the $|_{\bullet}$-grading, and 
$d$ is the grading 
(called 
$|^{\bullet}$-grading)
 determined by 
\begin{align}\label{upper:grade}
L^0_{\bullet}(W_{\bullet})=W_{\bullet}, \ 
L^{d+1}_{\bullet}(W_{\bullet})=[W_{\bullet}, L^d_{\bullet}(W_{\bullet})].
\end{align}
For example, $L^1_{e}(W_{\bullet})$ 
is spanned by
\begin{align*}
[x_0, x_1]=x_0 \otimes x_1-(-1)^{e_0 e_1} x_1 \otimes x_0, \ 
x_i \in W_{e_i}, \ e_0+e_1=e.
\end{align*}
\begin{defi}
The \textit{free Poisson algebra} generated by $W_{\bullet}$ is defined by
\begin{align}\label{free:Poiss}
\mathrm{Poiss}(W_{\bullet}) \cneq SL(W_{\bullet})
\end{align}
where $S(-)$ is the super symmetric product with 
respect to the $|_{\bullet}$-grading. 
\end{defi}
Note that (\ref{free:Poiss}) 
is tri-graded:
it is the 
direct sum of $S^n L(W_{\bullet})_{e}^{d}$
spanned by 
elements of the form
\begin{align*}
\prod_{i=1}^{n}
[x_0^{(i)}, [x_1^{(i)}, \cdots, [x_{d_i-1}^{(i)}, x_{d_i}^{(i)}]\cdots ]]
\end{align*}
for $x_{j}^{(i)} \in W_{e_{ij}}$
with $1\le i\le n$, $0\le j\le d_i$ satisfying
\begin{align*}
\sum_{i=1}^{n}d_i=d, \quad
 \sum_{i=1}^{n} \sum_{j=1}^{d_i} e_{ij}=e. 
\end{align*}
Here $e$ is $|_{\bullet}$-degree, 
$d$ is $|^{\bullet}$-degree, and 
$n$ is called $^{\bullet}|$-degree. 
By the $|_{\bullet}$-graded Leibniz rule, 
the bracket $[-, -]$
on $L(W_{\bullet})$ extends 
to the $|_{\bullet}$-graded bracket $\{-, -\}$
on 
(\ref{free:Poiss}). 
Then the algebra (\ref{free:Poiss})
is a $|_{\bullet}^{\bullet}$-graded Poisson algebra 
with respect to the gradings $|^{\bullet}$
and $|_{\bullet}$. 

Let $A_{\bullet}$ be a
super commutative graded algebra. 
By regarding it as a graded vector space, 
we obtain the $|_{\bullet}^{\bullet}$-graded Poisson algebra 
$\mathrm{Poiss}(A_{\bullet})$. 
Let $I_{A_{\bullet}} \subset S(A_{\bullet})$ be the 
ideal given by the exact sequence
\begin{align*}
0 \to I_{A_{\bullet}} \to S(A_{\bullet}) \stackrel{\eta}{\to}
 A_{\bullet} \to 0. 
\end{align*}
Here $\eta$ is given by 
the multiplication in $A_{\bullet}$. 
The ideal $I_{A_{\bullet}}$ is generated by 
elements of the form $a \cdot b -ab$ for 
$a, b \in A_{\bullet}$. 
We then define the ideal 
\begin{align}\label{ideal:IA}
\langle \hspace{-0.5mm} \langle 
I_{A_{\bullet}}
\rangle \hspace{-0.5mm}
\rangle \subset \mathrm{Poiss}(A_{\bullet})
\end{align}
to be generated by elements of the form
\begin{align*}
\{x_1, \{x_2, \cdots, \{x_{k-1}, y\} \cdots \} \}, \ 
x_1, \cdots, x_{k-1} \in A_{\bullet}, \ y \in I_{A_{\bullet}}. 
\end{align*}
\begin{defi}
The $|_{\bullet}^{\bullet}$-\textit{graded
Poisson envelope} of $A_{\bullet}$ is defined by
\begin{align}\label{bi-Poi}
P(A_{\bullet})
\cneq \mathrm{Poiss}(A_{\bullet})/\langle \hspace{-0.5mm}
 \langle 
I_{A_{\bullet}}
\rangle \hspace{-0.5mm} \rangle. 
\end{align}
\end{defi}
The ideal (\ref{ideal:IA})
is homogeneous with respect to the
$|_{\bullet}$ and $|^{\bullet}$ grading
(but not for $^{\bullet}|$-grading), 
so the algebra (\ref{bi-Poi})
is $|_{\bullet}^{\bullet}$-graded:
it is the direct sum of $P(A_{\bullet})_{e}^d$, where $d$ is
$|^{\bullet}$-degree
and $e$ is $|_{\bullet}$-degree. 
By the definition of (\ref{ideal:IA}), 
the Poisson structure on $\mathrm{Poiss}(A_{\bullet})$
descends to the $|_{\bullet}^{\bullet}$-graded Poisson structure on 
(\ref{bi-Poi}). 
By forgetting the $|^{\bullet}$-grading on 
(\ref{bi-Poi}), we 
obtain the $|_{\bullet}$-graded 
Poisson algebra $P(A_{\bullet})_{\bullet}$. 
\begin{lem}
The functor 
$A_{\bullet} \mapsto P(A_{\bullet})_{\bullet}$
is the left adjoint of (\ref{Pois:fun}). 
\end{lem}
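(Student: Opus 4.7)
The plan is to verify the adjunction by exhibiting a natural unit map $\eta_A : A_\bullet \to P(A_\bullet)_\bullet$ of super commutative graded algebras, and then showing that any morphism $f : A_\bullet \to P_\bullet$ of super commutative graded algebras, where $P_\bullet$ is a $|_\bullet$-graded Poisson algebra viewed via the forgetful functor, factors uniquely through $\eta_A$ as a Poisson morphism $\tilde{f} : P(A_\bullet)_\bullet \to P_\bullet$.

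First I would define $\eta_A$ as the composition $A_\bullet \hookrightarrow L(A_\bullet) \hookrightarrow SL(A_\bullet) = \mathrm{Poiss}(A_\bullet) \twoheadrightarrow P(A_\bullet)_\bullet$, where the first two inclusions are the tautological ones coming from the construction of the free super Lie algebra and free super symmetric product. The composition is a priori only a graded linear map from $A_\bullet$; however, the defining relations of $I_{A_\bullet}$ identify $a \otimes b \in S(A_\bullet)$ with the product $ab \in A_\bullet$, so after passing to the quotient by $\langle\langle I_{A_\bullet}\rangle\rangle$, the map $\eta_A$ becomes a morphism of super commutative graded algebras.

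Next, given such $f : A_\bullet \to P_\bullet$, I would construct $\tilde{f}$ in three stages using successive universal properties: the free super Lie algebra property of $L(A_\bullet)$, using the Poisson bracket on $P_\bullet$ as its super Lie structure, yields a super Lie morphism $L(A_\bullet) \to P_\bullet$; the free super commutative algebra property of $SL(A_\bullet)$ on $L(A_\bullet)$ extends this to an algebra morphism $\mathrm{Poiss}(A_\bullet) \to P_\bullet$; and combined, by construction, this is a Poisson morphism. It remains to check that this Poisson morphism factors through $P(A_\bullet)_\bullet$, i.e. kills $\langle\langle I_{A_\bullet}\rangle\rangle$. The multiplicativity of $f$ forces $I_{A_\bullet}$ to be annihilated, and since a Poisson morphism intertwines brackets, the iterated brackets $\{x_1, \{x_2, \cdots, \{x_{k-1}, y\}\cdots\}\}$ with $y \in I_{A_\bullet}$ are likewise sent to zero in $P_\bullet$, so the whole two-sided ideal generated by such elements vanishes.

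The uniqueness of $\tilde{f}$ is the easiest part: $P(A_\bullet)_\bullet$ is generated as a Poisson algebra by the image of $A_\bullet$ under $\eta_A$, so any two Poisson morphisms agreeing on this image must coincide. The main technical point to be careful about is the bookkeeping with the $|_\bullet$-grading and the signs in the super Leibniz and super Jacobi identities when extending $f$ through $L(A_\bullet)$ and $SL(A_\bullet)$; however, these are built into the definitions of the free graded super Lie algebra $L(A_\bullet)$ and of $|_\bullet$-graded Poisson algebras, so the extension is forced and the argument is essentially formal.
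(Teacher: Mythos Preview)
Your proposal is correct and is precisely the standard verification of the universal property; the paper itself gives no argument beyond ``The proof is straightforward and left to the reader,'' so your outline is exactly the kind of detail the author had in mind and omitted.
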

\begin{proof}
The proof is straightforward and left to the reader. 
\end{proof}
\begin{exam}
Let $W_{\bullet}$ be a finite dimensional graded vector space
and 
set
\begin{align*}
A_{\bullet}=S(W_{\bullet}). 
\end{align*}
Note that $A_{\bullet}$ is a super commutative 
graded algebra. 
In this case, we have 
the canonical 
isomorphism of
$|_{\bullet}$-graded
Poisson algebras
\begin{align}\label{isom:PG}
P(A_{\bullet})_{\bullet} \stackrel{\cong}{\to} 
\gr_F
T(W_{\bullet})_{\bullet}.  
\end{align}
The above isomorphism is proved 
in~\cite[Example~4.1.2]{Kap17}
when $W_{\bullet}$ consists of degree zero part, 
and the same argument works in the general case.  
\end{exam}

\subsection{Description of graded Poisson envelopes}
Let $A_{\bullet}$ be a super commutative graded algebra 
given by
\begin{align}\label{Lam:ab}
A_{\bullet} =R \otimes S(W_{\bullet})
\end{align}
for 
a smooth commutative algebra $R$ and 
a finite dimensional graded vector space $W_{\bullet}$. 
Let $\Omega_{A_{\bullet}}$ be the graded module of 
differential forms on $A_{\bullet}$. 
Similarly to (\ref{def:LW}), let 
\begin{align}\label{def:LA}
L_{A_{\bullet}}(\Omega_{A_{\bullet}}) \subset 
\bigoplus_{n\ge 0} 
\overset{n}{\overbrace{\Omega_{A_{\bullet}} \otimes_{A_{\bullet}}
 \cdots \otimes_{A_{\bullet}}
\Omega_{A_{\bullet}}}}
\end{align}
be the super $A_{\bullet}$-Lie subalgebra generated by 
$\Omega_{A_{\bullet}}$. 
It has $|_{\bullet}$-grading induced by the grading on $W_{\bullet}$, 
and also $|^{\bullet}$-grading 
similarly to (\ref{upper:grade}). 
Let $L_{A_{\bullet}}^{+}(\Omega_{A_{\bullet}})$ 
be the positive degree part 
of (\ref{def:LA})
with respect to the $|^{\bullet}$-grading. 
\begin{lem}\label{isom:bigra}
If $R$ is local, we have an isomorphism of $|_{\bullet}^{\bullet}$-graded 
Poisson algebras
\begin{align*}
P(A_{\bullet})_{\bullet}^{\bullet} \cong
S_{A_{\bullet}}
(L_{A_{\bullet}}^{+}(\Omega_{A_{\bullet}}))_{\bullet}^{\bullet}. 
\end{align*}
\end{lem}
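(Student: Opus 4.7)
The plan is to check that $\mathbf{P} := S_{A_\bullet}(L_{A_\bullet}^+(\Omega_{A_\bullet}))$ satisfies the same universal property as the Poisson envelope $P(A_\bullet)_\bullet^\bullet$, extending Kapranov's ungraded result \cite[Example~4.1.2]{Kap17} by keeping track of super signs from the $|_\bullet$-grading.

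First I would equip $\mathbf{P}$ with a canonical $|_\bullet^\bullet$-graded Poisson structure. The $|^\bullet$-grading places $A_\bullet$ in degree $0$ and is induced on the $L$-piece by (\ref{upper:grade}); the $|_\bullet$-grading is inherited from $W_\bullet$ and $\Omega_{A_\bullet}$. The Poisson bracket restricts to the tautological super $A_\bullet$-Lie bracket on $L_{A_\bullet}^+(\Omega_{A_\bullet})$; on the commutative part it is defined by $\{a, b\} := [da, db] \in L^1_{A_\bullet}(\Omega_{A_\bullet})$ for $a, b \in A_\bullet$ (equivalently, $d$ intertwines the Poisson bracket on $A_\bullet$ with the Lie bracket on $\Omega_{A_\bullet}$); and it is propagated across the super symmetric product by the super Leibniz rule. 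Super anti-symmetry and super Jacobi follow from the corresponding identities in the free super $A_\bullet$-Lie algebra, combined with the super Leibniz extension.

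Second, I would verify that $\mathbf{P}$ satisfies the universal property of the graded Poisson envelope: given any $|_\bullet^\bullet$-graded Poisson algebra $Q_\bullet^\bullet$ and a morphism of super commutative graded algebras $\varphi \colon A_\bullet \to Q^0_\bullet$, the unique Poisson extension $\widetilde{\varphi} \colon \mathbf{P} \to Q_\bullet^\bullet$ sends $df \in \Omega_{A_\bullet}$ to the element representing the super derivation $\{\varphi(-), \varphi(f)\}$ on $A_\bullet$ — well defined by the universal property of $\Omega_{A_\bullet}$ as the target of super derivations — then extends to $L_{A_\bullet}^+(\Omega_{A_\bullet})$ by freeness of the super $A_\bullet$-Lie algebra, and finally to $\mathbf{P}$ by freeness of the super symmetric product over $A_\bullet$. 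Since $P(A_\bullet)_\bullet^\bullet$ satisfies the same universal property by its defining construction (\ref{bi-Poi}), the canonical Poisson maps in both directions are mutually inverse and yield the desired isomorphism.

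The smoothness and locality of $R$ enter in ensuring that $\Omega_{A_\bullet}$ is a free $A_\bullet$-module (since $\Omega_R$ is $R$-free), which is needed for the universal property of $\Omega_{A_\bullet}$ as the target of super derivations to behave well after passing to the freely adjoined generators $W_\bullet$. The main obstacle is step one: constructing the Poisson bracket on $\mathbf{P}$ with correct super signs and verifying super Jacobi, particularly because the $|_\bullet^\bullet$-bigrading requires the bracket to shift $|^\bullet$-degree by exactly $+1$ while interacting consistently with super signs in the $|_\bullet$-grading. Once this structure is rigorously in place, the universal property argument is formal and specializes to Kapranov's computation \cite[Example~4.1.2]{Kap17} when $W_\bullet = 0$.
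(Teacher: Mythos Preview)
Your plan to verify the universal property directly is natural, but step~2 has a genuine gap. The problematic phrase is ``extends to $L_{A_\bullet}^+(\Omega_{A_\bullet})$ by freeness of the super $A_\bullet$-Lie algebra'': freeness would let you extend an $A_\bullet$-linear map $\Omega_{A_\bullet} \to Q$ to an $A_\bullet$-Lie homomorphism $L_{A_\bullet}(\Omega_{A_\bullet}) \to Q$ \emph{provided $Q$ is an $A_\bullet$-Lie algebra}. But the Poisson bracket on $Q$ is only $\mathbb{C}$-bilinear satisfying Leibniz, not $A_\bullet$-bilinear, so $(Q,\{-,-\})$ is not an $A_\bullet$-Lie algebra. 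Relatedly, your description of the image of $df$ as ``the element representing the super derivation $\{\varphi(-), \varphi(f)\}$'' produces a derivation $A_\bullet \to Q$ (equivalently an $A_\bullet$-linear map $\Omega_{A_\bullet} \to Q$ that depends on $f$), not an element of $Q$; there is no single map $\Omega_{A_\bullet} \to Q$ here to extend. One can attempt a rescue by choosing a basis $\{e_i\}$ of $V \oplus W_\bullet$, lifting to $\overline{e_i} \in A_\bullet$, sending iterated brackets of the $de_i$ to iterated Poisson brackets of the $\varphi(\overline{e_i})$, and extending $A_\bullet$-linearly and multiplicatively---but then verifying that the result is a Poisson homomorphism (in particular that $\widetilde{\varphi}\{a,b\}_{\mathbf{P}} = \{\varphi(a),\varphi(b)\}_Q$ for \emph{all} $a,b \in A_\bullet$, not just the chosen lifts) is precisely the nontrivial content of the lemma, not a formal consequence.

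A secondary point: the ungraded case you cite is not \cite[Example~4.1.2]{Kap17} (that identifies $P(S(W))$ with $\gr_F T(W)$, which is the statement (\ref{isom:PG}) used elsewhere), but rather Corti\~nas \cite[Theorem~1.4]{GC}. The paper's proof simply observes that Corti\~nas's argument goes through verbatim once one uses locality of $R$ to write $\Omega_{A_\bullet} = A_\bullet \otimes (V \oplus W_\bullet)$ with $V = {\bf m}/{\bf m}^2$, so that $S_{A_\bullet} L_{A_\bullet}^+(\Omega_{A_\bullet}) = A_\bullet \otimes SL^+(V \oplus W_\bullet)$; this replaces one line in \cite{GC} and the rest of that proof, which relies on a PBW-type identification rather than a bare universal-property check, carries over with super signs.
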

\begin{proof}
The case of $W_{\bullet}=0$ is 
proved in~\cite[Theorem~1.4]{GC}. 
The case of $W_{\bullet} \neq 0$ is similarly
proved without any modification. Indeed
let ${\bf m} \subset R$ be
the maximal ideal of $R$, 
and set $V={\bf m}/{\bf m}^2$. 
We have the identification
\begin{align}\label{id:omega}
\Omega_{A_{\bullet}}=A_{\bullet} \otimes (V \oplus W_{\bullet}).
\end{align}
By the identification (\ref{id:omega}), 
we have 
\begin{align*}
S_{A_{\bullet}}(L_{A_{\bullet}}^{+}(\Omega_{A_{\bullet}}))
=A_{\bullet} \otimes S L^{+}(V \oplus W_{\bullet}). 
\end{align*}
The above identification 
gives a substitute of the third line 
of the proof of~\cite[Theorem~1.4]{GC}, and 
the rest of the proof is the same. 
\end{proof}
Next we consider the case that $R$ is not necessary local. 
Let
\begin{align}\label{pi}
\pi \colon S L (A_{\bullet})_{\bullet}^{\bullet} \to 
P(A_{\bullet})_{\bullet}^{\bullet}
\end{align}
be the natural projection. The map (\ref{pi}) 
vanishes on $I_A$, hence it 
factors through the map
\begin{align*}
\overline{\pi} \colon A_{\bullet} \otimes 
S L^{+}(A_{\bullet})_{\bullet}^{\bullet}
\twoheadrightarrow P(A_{\bullet})_{\bullet}^{\bullet}. 
\end{align*}
The above map 
preserves
both of $|_{\bullet}$ and $|^{\bullet}$ degrees.  
For $n\ge 0$, let $I^n$ be the ideal of $P(A_{\bullet})_{\bullet}^{\bullet}$
\begin{align}\label{ideal:I}
I^n \cneq \overline{\pi} \left( A_{\bullet} \otimes \bigoplus_{p\ge n}
S^p L^{+}(A_{\bullet})_{\bullet}^{\bullet}  \right). 
\end{align}
The ideal $I^n$ is homogeneous in both of 
$|_{\bullet}$ and $|^{\bullet}$ degrees. 
We define
\begin{align}\label{def:G}
GP(A_{\bullet})^{\bullet}_{\bullet}
\cneq \bigoplus_{n\ge 0}
I^n/I^{n+1}. 
\end{align}
Note that (\ref{def:G}) is a tri-graded algebra:
it is a direct sum of 
\begin{align}\label{Ged}
G^{n} P(A_{\bullet})_{e}^d \cneq
\left( I^n/I^{n+1} \right)_{e}^{d}
\end{align}
where $e$ is $|_{\bullet}$-degree and 
$d$ is $|^{\bullet}$-degree. 
The degree $n$ is called $^{\bullet}|$-degree. 
Note that (\ref{Ged}) vanishes for $n>d$ so 
the filtration
\begin{align}\label{stabilize}
P(A_{\bullet})_{\bullet}^{d} = (I^0)^{d}_{\bullet} \supset
(I^1)^d_{\bullet} \supset
\cdots \supset (I^n)^d_{\bullet} \supset \cdots
\end{align}
stabilizes for $n>d$. 
\begin{lem}\label{lem:tri:gra}
There is a natural isomorphism of 
tri-graded algebras
\begin{align}\label{tri:gra}
S_{A_{\bullet}}^{\bullet}L_{A_{\bullet}}^{+}
(\Omega_{A_{\bullet}})_{\bullet}^{\bullet}
\stackrel{\cong}{\to}
G^{\bullet}P(A_{\bullet})^{\bullet}_{\bullet}. 
\end{align}
\end{lem}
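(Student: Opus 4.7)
The plan is to construct a natural tri-graded algebra homomorphism $\Phi$ from the LHS to the RHS, and then show it is an isomorphism by localizing $R$ at each maximal ideal and applying Lemma~\ref{isom:bigra}. First I would construct $\Phi$ using the Poisson bracket on $P(A_\bullet)$. For elements $a_1, \ldots, a_k \in A_\bullet$, the iterated Poisson bracket $\{a_1, \{a_2, \cdots, \{a_{k-1}, a_k\}\cdots\}\}$ lies in $I^1 \subset P(A_\bullet)$; its class modulo $I^2$ should receive the iterated Lie bracket $[da_1, [da_2, \cdots, [da_{k-1}, da_k]\cdots]] \in L^+_{A_\bullet}(\Omega_{A_\bullet})$. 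Multiplying such brackets and extending $A_\bullet$-linearly sends $S^n_{A_\bullet} L^+_{A_\bullet}(\Omega_{A_\bullet})$ into $I^n/I^{n+1}$, giving the candidate map $\Phi$. By construction $\Phi$ preserves all three gradings: the $^\bullet\lvert$-degree $n$ matches the filtration index, the $\lvert^\bullet$-degree tracks the depth of nesting of brackets, and the $\lvert_\bullet$-grading is inherited from $A_\bullet$.

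Next I would verify well-definedness. The $A_\bullet$-bilinearity of the super Lie bracket on $L^+_{A_\bullet}(\Omega_{A_\bullet})$, combined with the Leibniz rule for the universal derivation $d\colon A_\bullet \to \Omega_{A_\bullet}$, must be compatible with the image in $G^\bullet P$. This works because the Poisson bracket on $P(A_\bullet)$ satisfies the super Leibniz rule $\{ab, c\} = a\{b, c\} \pm \{a, c\} b$, and any failure of strict $A_\bullet$-bilinearity produces error terms of the form $\{a, c\} \cdot x$ with $x \in I^{\geq 1}$, which lie in $I^{\geq 2}$ and hence vanish in $G^1 P$. Likewise, Jacobi holds on both sides, and the super-commutativity of the symmetric product $S_{A_\bullet}^\bullet$ on the LHS matches the commutativity of multiplication in $\gr P$, since commutators $xy - (-1)^{\lvert x \rvert \lvert y \rvert} yx = \{x, y\}$ in $P$ of two elements of $I^{\geq 1}$ land in strictly higher filtration.

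To establish that $\Phi$ is an isomorphism, I would argue by localization at each maximal ideal $\mathfrak{m} \subset R$. All of the relevant constructions on both sides — Kähler differentials, the free super $A$-Lie algebra $L^+_{A_\bullet}(-)$, super-symmetric powers over $A_\bullet$, the Poisson envelope, and the ideal $I^\bullet$ — commute with localization; for the Poisson envelope this is the fact cited from \cite[Section~4.1]{Kap17} and already used in the proof of Lemma~\ref{lem:mult}, and for $I^n$ it follows from its description as the image of $A_\bullet \otimes S^{\geq n} L^+(A_\bullet)$. Once $R$ is local, Lemma~\ref{isom:bigra} identifies $P(A_\bullet)$ with $S_{A_\bullet} L^+_{A_\bullet}(\Omega_{A_\bullet})$ as a $\lvert_\bullet^\bullet$-graded Poisson algebra; under this identification $I^n$ corresponds to $\bigoplus_{p \geq n} S^p_{A_\bullet} L^+_{A_\bullet}(\Omega_{A_\bullet})$ on the nose, and its associated graded is manifestly $S^\bullet_{A_\bullet} L^+_{A_\bullet}(\Omega_{A_\bullet})$. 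Naturality of $\Phi$ will ensure that this local isomorphism coincides with the globally defined $\Phi$.

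The main obstacle will be the well-definedness check in the second step: verifying that $\Phi$ respects all defining relations of the $A_\bullet$-Lie algebra $L^+_{A_\bullet}(\Omega_{A_\bullet})$. The tension is that the bracket on $L^+_{A_\bullet}(\Omega_{A_\bullet})$ is strictly $A_\bullet$-bilinear, while the Poisson bracket on $P(A_\bullet)$ satisfies only the super Leibniz rule, so each identity must be carefully tracked modulo the next filtration piece. This is precisely why the target of $\Phi$ must be the associated graded $G^\bullet P$ rather than $P$ itself. Once $\Phi$ is known to be well-defined, the remaining steps reduce formally to Lemma~\ref{isom:bigra} together with the good behavior of Poisson envelopes under localization.
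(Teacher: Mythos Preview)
Your approach is correct and is essentially a detailed unpacking of what the paper does by citation. The paper's own proof is a two-line reference: when $W_{\bullet}$ is concentrated in degree zero the statement is \cite[Theorem~1.6]{GC}, and the general $|_{\bullet}$-graded case goes through verbatim. Your construction of $\Phi$, the well-definedness check modulo higher filtration, and the reduction to the local case via Lemma~\ref{isom:bigra} together with the localization behavior of Poisson envelopes are precisely the ingredients in Corti\~{n}as's argument; you have simply chosen to reprove the cited result rather than invoke it. One small point to tighten: when you assert that under the isomorphism of Lemma~\ref{isom:bigra} the filtration $I^n$ matches $\bigoplus_{p\ge n} S^p_{A_{\bullet}} L^+_{A_{\bullet}}(\Omega_{A_{\bullet}})$ ``on the nose,'' you are implicitly using that the isomorphism of Lemma~\ref{isom:bigra} is induced by the same surjection $\overline{\pi}$ that defines $I^n$, which is true but worth stating explicitly since Lemma~\ref{isom:bigra} as written only asserts compatibility with the $|_{\bullet}^{\bullet}$-grading, not with the $^{\bullet}|$-filtration.
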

\begin{proof}
If $W_{\bullet}$ consists of degree zero part, then 
$|_{\bullet}$-degrees of both 
sides of (\ref{tri:gra}) 
consists of degree zero, and 
the result is proved in~\cite[Theorem~1.6]{GC}. 
The case of general $|_{\bullet}$-graded $W_{\bullet}$ is 
similarly proved without any modification. 
\end{proof}

\subsection{Graded NC filtration via graded Poisson envelope}\label{subsec:via}
Let $R$ be a smooth
(not necessary commutative) algebra, and
$W_{\bullet}$ a finite dimensional graded vector space. 
We 
 consider
 the graded algebra 
$\Lambda_{\bullet}$ given by
(\ref{Lambda:bull}), and set
\begin{align*}
A_{\bullet} \cneq (\Lambda_{\bullet})^{ab}. 
\end{align*}
By (\ref{abeliz}), 
the algebra $A_{\bullet}$ is a super commutative graded algebra
of the form (\ref{Lam:ab}).  
We have the following lemma: 
\begin{lem}\label{lem:isom:Poi}
We have the canonical 
isomorphism of $|_{\bullet}^{\bullet}$-graded Poisson algebras
\begin{align*}
P(A_{\bullet})^{\bullet}_{\bullet} \stackrel{\cong}{\to} 
\gr_F(\Lambda_{\bullet})_{\bullet}^{\bullet}. 
\end{align*}
\end{lem}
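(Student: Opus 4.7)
The plan is to build a canonical map by universality, then verify it is an isomorphism by localizing to the point where Lemma~\ref{isom:bigra} applies.

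\textbf{Construction of the map.} As noted at the start of Subsection~\ref{subsec:via}, the associated graded $\gr_F(\Lambda_{\bullet})_{\bullet}^{\bullet}$ inherits a $|_{\bullet}^{\bullet}$-graded Poisson algebra structure from the commutator on $\Lambda_{\bullet}$, with $|^{\bullet}$-degree zero piece $\Lambda_{\bullet}^{ab}=A_{\bullet}$. The universal property of the graded Poisson envelope then produces a canonical morphism of $|_{\bullet}^{\bullet}$-graded Poisson algebras
\begin{align*}
\Phi \colon P(A_{\bullet})_{\bullet}^{\bullet} \longrightarrow \gr_F(\Lambda_{\bullet})_{\bullet}^{\bullet}.
\end{align*}
By construction $\Phi$ is the identity on $A_{\bullet}$ in $|^{\bullet}$-degree zero, and is generated from there by the Poisson bracket on both sides.

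\textbf{Reduction to the local case.} Both $P(A_{\bullet})$ and $\gr_F(\Lambda_{\bullet})$ are $R^{ab}$-modules via $R^{ab}\subset A_{\bullet}$, and the map $\Phi$ is $R^{ab}$-linear. The formation of the Poisson envelope commutes with localization at any multiplicative set $S\subset R^{ab}$ without zero divisor (as used in the last line of the proof of Lemma~\ref{lem:mult}, following \cite[Section~4.1]{Kap17}), and the same is true of $\gr_F(\Lambda_{\bullet})$, since $S^{-1}(R\ast T(W_{\bullet}))=S^{-1}R\ast T(W_{\bullet})$ and the NC filtration is generated by iterated commutators which are preserved by localization. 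Hence it suffices to check that $\Phi$ is an isomorphism after localizing $R^{ab}$ at every maximal ideal, so we may assume $R^{ab}$ is local.

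\textbf{Local case and main obstacle.} Once $R^{ab}$ is local, Lemma~\ref{isom:bigra} identifies $P(A_{\bullet})_{\bullet}^{\bullet}$ with $S_{A_{\bullet}}\bigl(L_{A_{\bullet}}^{+}(\Omega_{A_{\bullet}})\bigr)_{\bullet}^{\bullet}$, and using the splitting $\Omega_{A_{\bullet}}=A_{\bullet}\otimes(V\oplus W_{\bullet})$ for $V=\mathfrak{m}/\mathfrak{m}^2$, this becomes $A_{\bullet}\otimes SL^{+}(V\oplus W_{\bullet})$. On the right hand side, smoothness of $R$ lets us lift a set of generators of $V$ to $R$ (using Kapranov's criterion for smooth NC algebras, \cite[Theorem~1.6.1]{Kap17}), so that modulo higher order terms $\Lambda_{\bullet}=R\ast T(W_{\bullet})$ is presented as a free algebra on $V\oplus W_{\bullet}$ over $R^{ab}$ after passing to the associated graded. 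The ungraded analogue of this comparison is exactly \cite[Example~4.1.2]{Kap17}, and the $|_{\bullet}$-graded case follows by the same argument without modification, since the only new input is to track Koszul signs in the iterated brackets consistently with the super Lie algebra structure on $L_{A_{\bullet}}(\Omega_{A_{\bullet}})$. The main obstacle is this final step: showing that the explicit description of $\gr_F(\Lambda_{\bullet})$ in terms of iterated commutators of lifts of generators of $V\oplus W_{\bullet}$ matches $A_{\bullet}\otimes SL^{+}(V\oplus W_{\bullet})$ with matching super signs, which requires careful bookkeeping but no new ideas beyond the ungraded Kapranov computation.
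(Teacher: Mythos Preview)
Your construction of the canonical map via universality matches the paper exactly, and your overall strategy of reducing to a local computation is correct in spirit. However, the reduction you propose differs from the paper's and leaves a gap in the final step.

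The paper does not localize at maximal ideals; instead it tensors with the residue field $R^{ab}/\mathfrak{m}_x$ at each closed point $x$. The reason is that even after localizing so that $R^{ab}$ is local, the algebra $\Lambda_\bullet = R \ast T(W_\bullet)$ is still not a free tensor algebra, so \cite[Example~4.1.2]{Kap17} does not directly apply to compute $\gr_F(\Lambda_\bullet)$. To bridge this, the paper invokes \cite[Lemma~4.2.2]{Kap17}, which gives for $j>d$
\[
\gr_F(\Lambda_\bullet)^d_\bullet \otimes_{R^{ab}} R^{ab}/\mathfrak{m}_x \;\cong\; \gr_F(\Lambda_\bullet/\mathfrak{m}_x^j)^d_\bullet \otimes_{R^{ab}} R^{ab}/\mathfrak{m}_x.
\]
Since $\Lambda_\bullet/\mathfrak{m}_x^j \cong T(V \oplus W_\bullet)/\mathfrak{m}_0^j$ by smoothness of $R$, this reduces the fiber of $\gr_F(\Lambda_\bullet)$ to the tensor-algebra case, where the isomorphism (\ref{isom:PG}) applies and can be matched against the Poisson envelope computation (\ref{isom:P}), (\ref{isom:P2}). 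Your sketch (``smoothness of $R$ lets us lift generators of $V$ so that $\Lambda_\bullet$ is presented as a free algebra on $V\oplus W_\bullet$ after passing to the associated graded'') is gesturing at exactly this, but without the truncation lemma there is no argument given for why $\gr_F(R\ast T(W_\bullet))$ agrees with $\gr_F(T(V\oplus W_\bullet))$ even locally.

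A secondary point: your claim that $\gr_F(\Lambda_\bullet)$ commutes with localization is essentially what Lemma~\ref{lem:mult} establishes, and that lemma's proof \emph{uses} Lemma~\ref{lem:isom:Poi}. Your direct justification (``the NC filtration is generated by iterated commutators which are preserved by localization'') may be salvageable, but as written it would need more care to avoid circularity.
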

\begin{proof}
If $W_{\bullet}=0$, the result is 
the consequence of~\cite[Theorem~4.2.1]{Kap17}, 
and almost the same proof is applied
for $W_{\bullet} \neq 0$. 
Note that $\gr_F(\Lambda_{\bullet})^0=A_{\bullet}$. 
By the universality of the $|_{\bullet}$-graded Poisson envelope, we 
have the canonical morphism of $|_{\bullet}$-graded
Poisson algebras
$\phi \colon P(A_{\bullet})_{\bullet} \to 
\gr_F(\Lambda_{\bullet})_{\bullet}$, 
which also preserves $|^{\bullet}$-grading.  
Since both of
$P(A_{\bullet})^{\bullet}_{\bullet}$ and 
$\gr_F(\Lambda_{\bullet})^{\bullet}_{\bullet}$
are $|_{\bullet}$-graded $A_{\bullet}$-modules, 
we can interpret them as $R^{ab}$-modules 
by the algebra
homomorphism 
\begin{align*}
R^{ab} \to A_{\bullet}, \ 
\lambda \mapsto \lambda \otimes 1.
\end{align*}
 It is enough to show that, 
for any closed point $x \in \Spec R^{ab}$, 
$\phi$ induces
the isomorphism
\begin{align}\label{induced}
P(A_{\bullet})^{\bullet}_{\bullet} \otimes_{R^{ab}}
R^{ab}/{\bf m}_x \stackrel{\cong}{\to} 
\gr_F(\Lambda_{\bullet})^{\bullet}_{\bullet}
\otimes_{R^{ab}}
R^{ab}/{\bf m}_x.
\end{align}
Here ${\bf m}_x \subset R^{ab}$ is the maximal ideal 
which defines $x$. 
Let $A_{x, \bullet} \cneq 
\Lambda_{x}^{ab} \otimes S(W_{\bullet})$. 
Then we have
\begin{align*}
P(A_{\bullet})^{\bullet}_{\bullet} \otimes_{R^{ab}}
R^{ab}/{\bf m}_x=
P(A_{x, \bullet})^{\bullet}_{\bullet} \otimes_{R^{ab}}
R^{ab}/{\bf m}_x.
\end{align*}
By Lemma~\ref{isom:bigra}, the RHS is computed as
\begin{align}\label{isom:P}
P(A_{x, \bullet})^{\bullet}_{\bullet} \otimes_{R^{ab}}
R^{ab}/{\bf m}_x=
S(W_{\bullet})_{\bullet} \otimes 
S L^{+}(V \oplus W_{\bullet})_{\bullet}^{\bullet}.
\end{align}
Here $V={\bf m}_x/{\bf m}_x^2$. 
Applying the same argument for 
$S(V) \otimes S(W_{\bullet})=S(V\oplus W_{\bullet})$, 
we obtain
\begin{align}\label{isom:P2}
P(S(V \oplus W_{\bullet}))^{\bullet}_{\bullet}
\otimes_{S(V)} S(V)/{\bf m}_0
=S(W_{\bullet})_{\bullet} \otimes 
S L^{+}(V \oplus W_{\bullet})_{\bullet}^{\bullet}.
\end{align}
Here ${\bf m}_0$ is the maximal ideal of $S(V)$ 
corresponding to the origin. 
On the other hand, by~\cite[Lemma~4.2.2]{Kap17}, we have 
the isomorphism for $j>d$
\begin{align*}
\gr_F(\Lambda_{\bullet})^{d}_{\bullet}
\otimes_{R^{ab}}
R^{ab}/{\bf m}_x
\stackrel{\cong}{\to}
\gr_F(\Lambda_{\bullet}/{\bf m}_{x}^j)^{d}_{\bullet}
\otimes_{R^{ab}}
R^{ab}/{\bf m}_x.
\end{align*}
Since we have
\begin{align*}
\Lambda_{\bullet}/{\bf m}_x^j \cong
T(V \oplus W_{\bullet})/{\bf m}_0^j
\end{align*}
we have the identification
\begin{align}\label{isom:g}
\gr_F(\Lambda_{\bullet})_{\bullet}^{\bullet}
\otimes_{R^{ab}}
R^{ab}/{\bf m}_x =
\gr_F(T(V \oplus W_{\bullet}))^{\bullet}_{\bullet}
\otimes_{S(V)}
S(V)/{\bf m}_0. 
\end{align}
By (\ref{isom:P}), (\ref{isom:P2}), 
(\ref{isom:g}) and 
the isomorphism (\ref{isom:PG}), 
$\phi$ induces the isomorphism (\ref{induced}). 
\end{proof}

\subsection{NC virtual structure sheaves via perfect obstruction theory}
\label{subsec:viaperf}
We now return to the situation of Definition~\ref{defi:ncvir}.
Similarly to (\ref{def:LW}), (\ref{def:LA}), for a 
graded vector bundle $\pP_{\bullet} \to M$ on a scheme $M$, 
we set
\begin{align}\label{LOE}
L_{\oO_M}(\pP_{\bullet}) \subset T_{\oO_M}(\pP_{\bullet})
\end{align}
to be 
the sheaf of super $\oO_M$-Lie 
algebras generated by $\pP_{\bullet}$, i.e. 
each fiber of $L_{\oO_M}(\pP_{\bullet})$ at $x \in M$
is the super 
Lie algebra $L(\pP_{\bullet}|_{x})$.
Note that the grading on $\pP_{\bullet}$ induces the $|_{\bullet}$-grading on 
(\ref{LOE}). 
Similarly to (\ref{upper:grade}), we also have the $|^{\bullet}$-grading on
 (\ref{LOE}), 
and denote by $L_{\oO_M}^{+}(\pP_{\bullet})$ its positive degree part
with respect to the $|^{\bullet}$-grading. 
The following is the main result in this section: 
\begin{thm}\label{thm:formula}
In the situation of Definition~\ref{defi:ncvir}, we have the 
following formula in $K_0(M)$: 
\begin{align}\label{ncvir:formula}
(\oO_{M}^{\rm{ncvir}})^{\le d}=\oO_M^{\rm{vir}} \otimes_{\oO_M}
[S_{\oO_M}L_{\oO_M}^{+}(\eE_{\bullet})^{\le d}_{\bullet}]. 
\end{align}
Here
$\eE_{\bullet}$
is the two term complex (\ref{E:qis}), and 
 $(-)^{\le d}$ is the degree $\le d$ part 
with respect to the $|^{\bullet}$-grading. 
\end{thm}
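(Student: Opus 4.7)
The plan is to reduce to a local computation and express the graded pieces $\gr_F(\oO_{N,\bullet}^{\n})^j_\bullet$ through graded Poisson envelopes, combining Lemmas~\ref{lem:isom:Poi} and \ref{lem:tri:gra}. Corollary~\ref{cor:K0} will then let me trade this K-theoretic description for the commutative virtual structure sheaf multiplied by the restriction to $M$, which is matched to $S_{\oO_M}L_{\oO_M}^{+}(\eE_\bullet)^{\le d}_\bullet$ via the quasi-isomorphism $\overline{\Omega}_{N,\bullet}|_M \stackrel{\sim}{\to} \eE_\bullet$.

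Concretely, I would first use the fact that the $\gr_F(\oO_{U_i,\bullet}^{\n})^j_\bullet$ glue to the global dg-$\oO_{N,\bullet}$-module appearing in (\ref{gr:global}) to reduce everything to the affine case $\oO_{N,\bullet}^{\n} = \widetilde{\Lambda}_\bullet$ with $\Lambda_\bullet = R \ast T(W_\bullet)$ and $A_\bullet = \Lambda_\bullet^{ab} = \oO_{N,\bullet}$. Lemma~\ref{lem:isom:Poi} identifies $\gr_F(\oO_{N,\bullet}^{\n})^\bullet_\bullet$ with $P(\oO_{N,\bullet})^\bullet_\bullet$, while the filtration $I^\bullet$ on $P(\oO_{N,\bullet})^d_\bullet$ from (\ref{ideal:I}) stabilizes at $n > d$ by the remark following (\ref{stabilize}). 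Lemma~\ref{lem:tri:gra} identifies its subquotients with the locally free graded modules $S^n_{\oO_{N,\bullet}} L_{\oO_{N,\bullet}}^{+}(\Omega_{N,\bullet})^d_\bullet$, so the filtration splits at the level of graded $\oO_{N,\bullet}$-modules. Summing over $0 \le j \le d$ in $K_0(\oO_{N,\bullet})$ yields
\begin{align*}
\sum_{j=0}^{d}[\gr_F(\oO_{N,\bullet}^{\n})^j_\bullet] = \bigl[S_{\oO_{N,\bullet}} L_{\oO_{N,\bullet}}^{+}(\Omega_{N,\bullet})^{\le d}_\bullet\bigr].
\end{align*}

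With this established, I would apply Corollary~\ref{cor:K0} with $F_\bullet = S_{\oO_{N,\bullet}} L_{\oO_{N,\bullet}}^{+}(\Omega_{N,\bullet})^{\le d}_\bullet$, which is locally free as a graded $\oO_{N,\bullet}$-module because $\Omega_{N,\bullet}$ is. This gives
\begin{align*}
(\oO_{M}^{\rm{ncvir}})^{\le d} = \oO_M^{\rm{vir}} \otimes_{\oO_M} \bigl[\overline{S_{\oO_{N,\bullet}} L_{\oO_{N,\bullet}}^{+}(\Omega_{N,\bullet})^{\le d}_\bullet}\,\bigl|_M\bigr].
\end{align*}
Since $\overline{(-)}|_M = (-) \otimes_{\oO_{N,\bullet}} \oO_M$ and both $S$ and $L^+$ commute with base change, the right-hand factor rewrites as $[S_{\oO_M} L_{\oO_M}^{+}(\overline{\Omega}_{N,\bullet}|_M)^{\le d}_\bullet]$. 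The quasi-isomorphism $\overline{\Omega}_{N,\bullet}|_M \stackrel{\sim}{\to} \eE_\bullet$, combined with the fact that $S_{\oO_M}L_{\oO_M}^+(-)^{\le d}_\bullet$ is a polynomial functor of bounded degree on bounded complexes of vector bundles, gives $[S_{\oO_M} L_{\oO_M}^{+}(\overline{\Omega}_{N,\bullet}|_M)^{\le d}_\bullet] = [S_{\oO_M} L_{\oO_M}^{+}(\eE_\bullet)^{\le d}_\bullet]$, which completes the proof of (\ref{ncvir:formula}).

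The step I expect to be the main obstacle is the very last one: justifying that $S_{\oO_M}L_{\oO_M}^+(-)^{\le d}_\bullet$ descends to a well-defined operation on the K-theory of bounded complexes of vector bundles. Each constituent $S^n L^m$ decomposes into Schur-type summands indexed by partitions, and one must check additivity on short exact sequences of complexes (or equivalently, that the action on $K_0$ is insensitive to quasi-isomorphism). Once that is in place, the remainder of the argument is essentially bookkeeping with the filtrations $F^\bullet$ and $I^\bullet$ and compatibility with base change; no further geometric input is needed beyond what is furnished by Lemmas~\ref{lem:isom:Poi} and \ref{lem:tri:gra}.
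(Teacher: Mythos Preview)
Your proposal is correct and follows essentially the same route as the paper: identify $\gr_F(\oO_{N,\bullet}^{\n})^{\le d}_\bullet$ with $P(\oO_{N,\bullet})^{\le d}_\bullet$ via Lemma~\ref{lem:isom:Poi}, pass to the associated graded of the $I^\bullet$-filtration and invoke Lemma~\ref{lem:tri:gra}, then apply Corollary~\ref{cor:K0} and finally replace $\overline{\Omega}_{N,\bullet}|_M$ by $\eE_\bullet$. The paper isolates your anticipated obstacle as a separate lemma (Lemma~\ref{lem:Kth}), proving that $[{\bf S}_\lambda(\pP_\bullet)]=[{\bf S}_\lambda(\qQ_\bullet)]$ for any quasi-isomorphism $\pP_\bullet\to\qQ_\bullet$ by exploiting the acyclicity of ${\bf S}_\lambda(\mathrm{Cone}(s)_\bullet)$ together with the Littlewood--Richardson decomposition of ${\bf S}_\nu(\qQ_\bullet\oplus\pP_\bullet[1])$ and induction on $|\lambda|$.
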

\begin{proof}
For a commutative dg-scheme $(N, \oO_{N, \bullet})$, 
the construction of the graded Poisson envelope
yields the sheaf of graded $\oO_{N, \bullet}$-module 
$P(\oO_{N, \bullet})^{d}_{\bullet}$
for each $d \in \mathbb{Z}_{\ge 0}$.  
By Lemma~\ref{lem:isom:Poi}, we have the 
isomorphism of
graded $\oO_{N, \bullet}$-modules
\begin{align}\label{pf:1}
P(\oO_{N, \bullet})^{\le d}_{\bullet}
\stackrel{\cong}{\to}
\gr_F(\oO_{N, \bullet}^{\n})_{\bullet}^{\le d}. 
\end{align}
Also the construction of the ideals (\ref{ideal:I})
yields 
the filtration 
of graded $\oO_{N, \bullet}$-modules 
\begin{align}\notag
P(\oO_{N, \bullet})^{\le d}_{\bullet}
=(\iI^0)_{\bullet}^{\le d} \supset (\iI^1)_{\bullet}^{\le d}
\supset \cdots \supset (\iI^n)^{\le d}_{\bullet} \supset \cdots
\end{align}
which stabilizes due to
the stabilization of (\ref{stabilize}). 
Hence using the notation (\ref{K:grade}) and (\ref{def:G}),  
we have the identity
in $K_0(\oO_{N, \bullet})$
\begin{align}\label{pf:2}
[P(\oO_{N, \bullet})^{\le d}_{\bullet}]
=[G^{\bullet}P(\oO_{N, \bullet})^{\le d}_{\bullet}]. 
\end{align} 
Then by Lemma~\ref{lem:tri:gra}, 
we have the isomorphism 
of graded $\oO_{N, \bullet}$-modules
\begin{align}\label{pf:3}
S_{\oO_{N, \bullet}}^{\bullet}L_{\oO_{N, \bullet}}^{+}
(\Omega_{N, \bullet})_{\bullet}^{\le d} \stackrel{\cong}{\to}
G^{\bullet}P(\oO_{N, \bullet})^{\le d}_{\bullet}. 
\end{align}
By (\ref{pf:1}), (\ref{pf:2}), (\ref{pf:3})
and Corollary~\ref{cor:K0}, 
we obtain the identity in $K_0(M)$:
\begin{align}\notag
(\oO_M^{\rm{ncvir}})^{\le d}=\oO_M^{\rm{vir}}
\otimes_{\oO_M} [S_{\oO_M} L_{\oO_M}^{+}
(\overline{\Omega}_{N, \bullet}|_{M})^{\le d}_{\bullet}]. 
\end{align}
We are left to show the identity
\begin{align}\label{id:S}
[S_{\oO_M} L_{\oO_M}^{+}
(\overline{\Omega}_{N, \bullet}|_{M})^{\le d}_{\bullet}]
=[S_{\oO_M} L_{\oO_M}^{+}
(\eE_{\bullet})^{\le d}_{\bullet}]. 
\end{align}
For a partition of $n$
\begin{align}\label{parti}
\lambda=(\lambda_1, \lambda_2, \cdots, \lambda_k), \ 
\lambda_1 \ge \lambda_2 \ge \cdots \ge \lambda_k
\end{align} let 
$V_{\lambda}$ be the corresponding 
irreducible representation of $S_n$. 
Let ${\bf S}_{\lambda}$ be
the Schur functor defined on the category of 
complexes of vector bundles on $M$ to itself 
(cf.~\cite[Section~2]{PrWJ}):
\begin{align}\label{Schur}
{\bf S}_{\lambda} \colon 
\pP_{\bullet} \mapsto 
(V_{\lambda} \otimes_{\oO_M} \pP_{\bullet}^{\otimes n})^{S_n}. 
\end{align}
Since the functor 
$\pP_{\bullet} \mapsto
S_{\oO_M} L_{\oO_M}^{+}(\pP_{\bullet})_{\bullet}^{\le d}$
is a polynomial functor
on the category of graded vector bundles on $M$ 
 in the sense of~\cite[Appendix~A]{IMac},  
it is described as 
\begin{align}\label{polynomial}
\bigoplus_{n\ge 0}
\bigoplus_{\lvert \lambda \rvert=n} \uU_{\bullet, \lambda}
\otimes_{\oO_M} {\bf S}_{\lambda}(\pP_{\bullet})
\end{align}
for graded vector bundles 
$\uU_{\bullet, \lambda}$
on 
$M$. 
Here for a partition (\ref{parti}), 
we set 
$\lvert \lambda \rvert=\lambda_1+ \lambda_2+ \cdots +\lambda_k$. 
Therefore the identity (\ref{id:S})
follows from 
Lemma~\ref{lem:Kth}
below. 
\end{proof}
We have used the following lemma, which is 
probably well-known, but 
include the proof because of the lack of a reference. 
\begin{lem}\label{lem:Kth}
Let $\pP_{\bullet}, \qQ_{\bullet}$ be bounded 
complexes of vector bundles on a scheme $M$ 
and $s \colon \pP_{\bullet} \to \qQ_{\bullet}$ a 
quasi-isomorphism. 
Then we have the identity 
$[{\bf S}_{\lambda}(\pP_{\bullet})]=[{\bf S}_{\lambda}(\qQ_{\bullet})]$
in $K_0(M)$. 
\end{lem}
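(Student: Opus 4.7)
My plan is to prove the lemma by reducing, via the Littlewood--Richardson expansion of Schur functors on an extension, to the vanishing of $[{\bf S}_\lambda(\cC_\bullet)]$ for bounded acyclic complexes of vector bundles.

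First, let $\cC_\bullet = \Cone(s)$, which is a bounded acyclic complex of vector bundles. The mapping cylinder $\mathrm{Cyl}(s)$ sits in two termwise-split short exact sequences
$$
0 \to \pP_\bullet \to \mathrm{Cyl}(s) \to \cC_\bullet \to 0, \qquad
0 \to A_\bullet \to \mathrm{Cyl}(s) \to \qQ_\bullet \to 0,
$$
with $\cC_\bullet$ and $A_\bullet$ both bounded acyclic. The standard Littlewood--Richardson filtration on ${\bf S}_\lambda$ applied to a termwise-split SES $0 \to X_\bullet \to Y_\bullet \to Z_\bullet \to 0$ of bounded complexes of vector bundles produces a natural filtration whose associated graded is $\bigoplus_{\mu,\nu} c^\lambda_{\mu\nu}\, {\bf S}_\mu(X_\bullet) \otimes_{\oO_M} {\bf S}_\nu(Z_\bullet)$, so
$$
[{\bf S}_\lambda(Y_\bullet)] = \sum_{\mu,\nu} c^\lambda_{\mu\nu} [{\bf S}_\mu(X_\bullet)] \cdot [{\bf S}_\nu(Z_\bullet)]
$$
in $K_0(M)$. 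Granting the vanishing claim below for acyclic complexes, only the terms with $\nu = \emptyset$ (respectively $\mu = \emptyset$) survive in the two sequences, yielding $[{\bf S}_\lambda(\pP_\bullet)] = [{\bf S}_\lambda(\mathrm{Cyl}(s))] = [{\bf S}_\lambda(\qQ_\bullet)]$.

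The key vanishing $[{\bf S}_\nu(\cC_\bullet)] = 0$ for $|\nu| \ge 1$ and bounded acyclic $\cC_\bullet$ is proved by double induction on the length of $\cC_\bullet$ and on $|\nu|$. Let $\cC_n$ be the top non-zero term; acyclicity and boundedness force $d_n\colon \cC_n \hookrightarrow \cC_{n-1}$ to be a subbundle inclusion with locally free quotient $\cC'_{n-1}$, so the truncated complex $\cC'_\bullet$ (obtained by replacing $\cC_{n-1}$ by $\cC'_{n-1}$ and deleting $\cC_n$) is a bounded acyclic complex of vector bundles of strictly smaller length. There is a termwise-split short exact sequence of complexes
$$
0 \to (\cC_n \stackrel{\id}{\to} \cC_n)[n] \to \cC_\bullet \to \cC'_\bullet \to 0,
$$
and the Littlewood--Richardson formula together with the two inductive hypotheses reduces the task to the single identity $[{\bf S}_\lambda(V \stackrel{\id}{\to} V)] = 0$ for a vector bundle $V$ and every $|\lambda| \ge 1$.

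This last identity, which I expect to be the main technical step, follows from the explicit super decomposition $[{\bf S}_\lambda(V \oplus V[1])] = \sum_{\mu,\nu} c^\lambda_{\mu\nu} (-1)^{|\nu|} [{\bf S}_\mu(V)][{\bf S}_{\nu'}(V)]$ combined with the symmetric-function identity $s_\lambda(x,-x) = 0$ for $|\lambda| \ge 1$, a formal consequence of the fact that $s_\lambda$ is a $\lambda$-ring operation annihilating the zero virtual element when $|\lambda|>0$. The principal obstacle is giving a clean statement and verification of the Littlewood--Richardson filtration in the super/complex setting, i.e.\ checking termwise splitness of the filtration on ${\bf S}_\lambda(Y_\bullet)$ and consistent bookkeeping of signs arising from the odd (shifted) components; once this is in place, the rest is formal induction.
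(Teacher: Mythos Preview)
Your proposal is correct, but it takes a considerably longer route than the paper's argument, and the detour you flag as the ``principal obstacle'' can be avoided entirely.

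The paper's proof of the key vanishing $[{\bf S}_\lambda(\cC_\bullet)]=0$ for bounded acyclic $\cC_\bullet$ is a one-liner in characteristic zero: the Schur--Weyl decomposition $\cC_\bullet^{\otimes n}=\bigoplus_{|\lambda|=n}{\bf S}_\lambda(\cC_\bullet)\otimes V_\lambda$ exhibits ${\bf S}_\lambda(\cC_\bullet)$ as a direct summand of $\cC_\bullet^{\otimes n}$, which is acyclic since $\cC_\bullet$ is.  So ${\bf S}_\lambda(\cC_\bullet)$ is itself acyclic, not merely zero in $K_0$.  This replaces your entire double induction (peeling off the top term, reducing to $V\stackrel{\id}{\to}V$, and invoking $s_\lambda(x,-x)=0$).

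For the reduction step the paper also proceeds differently: instead of passing through the mapping cylinder and two termwise-split short exact sequences with the Littlewood--Richardson filtration, it simply observes that as a \emph{graded} bundle $\Cone(s)_\bullet\cong \qQ_\bullet\oplus\pP_\bullet[1]$, so the $K$-class of ${\bf S}_\nu(\Cone(s)_\bullet)$ equals that of ${\bf S}_\nu(\qQ_\bullet\oplus\pP_\bullet[1])$, to which one applies Littlewood--Richardson.  Doing the same for $\id\colon\pP_\bullet\to\pP_\bullet$ produces a second relation, and subtracting the two gives an upper-triangular system in the differences $[{\bf S}_\lambda(\qQ_\bullet)]-[{\bf S}_\lambda(\pP_\bullet)]$, finished by induction on $|\lambda|$.

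Your route via the cylinder and the LR filtration on complexes is sound (and would survive in situations where the Schur--Weyl splitting fails), but it requires you to set up and verify that filtration with its super sign conventions---exactly the obstacle you identify---whereas the paper's argument sidesteps it by forgetting the differential and comparing $K$-classes directly.
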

\begin{proof}
By the representation theory of $S_n$ (cf.~\cite[Theorem~6.3]{FuHar}), 
we have the decomposition
of complexes
\begin{align*}
(\pP_{\bullet})^{\otimes n} =
\bigoplus_{\lvert \lambda \rvert=n} 
{\bf S}_{\lambda}(\pP_{\bullet}) \otimes V_{\lambda}. 
\end{align*}
Therefore 
if $\pP_{\bullet}$ is acyclic, 
then ${\bf S}_{\lambda}(\pP_{\bullet})$ is
also acyclic.  
Let $\Cone(s)_{\bullet}$ be the cone of $s$, which is 
acyclic as $s$ is 
quasi-isomorphism. 
By the above argument, the complex
${\bf S}_{\lambda}(\Cone(s)_{\bullet})$ is 
also acyclic, hence it is zero in $K_0(M)$. 
On the other hand, as 
$\Cone(s)_{\bullet}$ is 
$\qQ_{\bullet} \oplus \pP_{\bullet}[1]$ as a
graded vector bundle, 
${\bf S}_{\lambda}(\Cone(s)_{\bullet})$ is isomorphic to 
${\bf S}_{\lambda}(\qQ_{\bullet} \oplus \pP_{\bullet}[1])$
as graded vector bundles. 
We have the decomposition
as graded vector bundles (cf.~\cite[Exercise~6.11]{FuHar})
\begin{align}\label{rule:1}
{\bf S}_{\nu}(\qQ_{\bullet} \oplus \pP_{\bullet}[1])
=\bigoplus_{\lvert \lambda \rvert + \lvert \mu \rvert=\lvert \nu \rvert}
\left({\bf S}_{\lambda}(\qQ_{\bullet}) 
\otimes_{\oO_M} {\bf S}_{\mu}
(\pP_{\bullet}[1])\right)^{\oplus N_{\lambda, \mu}^{\nu}}. 
\end{align}
Here $N_{\lambda, \mu}^{\nu} \in \mathbb{Z}_{\ge 0}$
is determined by the Littlewood-Richardson rule. 
Hence we obtain the identity in $K_0(M)$:
\begin{align}\label{rule:2}
\sum_{\lvert \lambda \rvert + \lvert \mu \rvert=\lvert \nu \rvert}
N_{\lambda, \mu}^{\nu}[{\bf S}_{\lambda}(\qQ_{\bullet})] \otimes_{\oO_M}
[{\bf S}_{\mu}(\pP_{\bullet}[1])]=0. 
\end{align}
Applying the above identity to $\id \colon \pP_{\bullet} \to \pP_{\bullet}$, we obtain
\begin{align}\label{rule:3}
\sum_{\lvert \lambda \rvert + \lvert \mu \rvert=\lvert \nu \rvert}
N_{\lambda, \mu}^{\nu}[{\bf S}_{\lambda}(\pP_{\bullet})] \otimes_{\oO_M}
[{\bf S}_{\mu}(\pP_{\bullet}[1])]=0. 
\end{align}
Noting that $N_{\lambda, \emptyset}^{\lambda}=1$, 
the identities (\ref{rule:2}), (\ref{rule:3}) together with
the induction on $\lvert \lambda \rvert$
shows that 
$[{\bf S}_{\lambda}(\pP_{\bullet})]=[{\bf S}_{\lambda}(\qQ_{\bullet})]$. 
\end{proof}
Using Theorem~\ref{thm:formula}, we 
can compute NC virtual structure sheaves 
in terms of Schur complexes
${\bf S}_{\lambda}(\eE_{\bullet})$, 
given by (\ref{Schur}). 
Note that we have
\begin{align*}
{\bf S}_{\lambda}
(\eE_{\bullet})=\bigwedge^d \eE_{\bullet}, \ 
\lambda=\overset{d}{\overbrace{(1, 1, \cdots, 1)}}. 
\end{align*}
\begin{cor}\label{cor:virform}
(i) 
For $d=1$, we have the following formula: 
\begin{align*}
(\oO_M^{\rm{ncvir}})^{\le 1}=
\oO_M^{\rm{vir}} \otimes_{\oO_M}
 \left(1+ {\bf S}_{(1, 1)}(\eE_{\bullet}) \right). 
\end{align*}

(ii) For $d=2$, we have the following formula: 
\begin{align*}
(\oO_M^{\rm{ncvir}})^{\le 2}=
\oO_M^{\rm{vir}} \otimes_{\oO_M} \left(1+
{\bf S}_{(1, 1)}(\eE_{\bullet}) + {\bf S}_{(2, 1)}(\eE_{\bullet})
+{\bf S}_{(2, 2)}(\eE_{\bullet}) + {\bf S}_{(1, 1, 1, 1)}(\eE_{\bullet})
 \right). 
\end{align*}
\end{cor}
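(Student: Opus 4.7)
The plan is to derive both formulas directly from Theorem~\ref{thm:formula}, which asserts
\[
(\oO_M^{\rm{ncvir}})^{\le d} = \oO_M^{\rm{vir}} \otimes_{\oO_M}
[S_{\oO_M}L_{\oO_M}^{+}(\eE_{\bullet})^{\le d}_{\bullet}].
\]
Thus the corollary reduces to an explicit $S_n$-representation-theoretic decomposition of
$S_{\oO_M}L^{+}(\eE_\bullet)^{\le d}_\bullet$ in the K-group. Since $L^{+}$ is concentrated in
$|^{\bullet}$-degrees $\ge 1$, the degree-$\le d$ part of the super symmetric algebra splits as
a \emph{finite} direct sum indexed by multisets $\{d_1,\ldots,d_k\}$ with $d_i \ge 1$ and
$d_1+\cdots+d_k \le d$; each summand is a super symmetric product of the Lie pieces
$L^{d_i}(\eE_\bullet)$. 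I would then identify each Lie piece with a Schur complex via the
definition (\ref{Schur}).

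For part (i), the only contribution beyond $\oO_M$ is $L^{1}(\eE_\bullet) = [\eE_\bullet,\eE_\bullet]$,
which is by construction the super-antisymmetric square of $\eE_\bullet$. Since the sign
representation of $S_2$ corresponds to the partition $(1,1)$, this is exactly
${\bf S}_{(1,1)}(\eE_\bullet)$, giving the stated formula.

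For part (ii), the $|^\bullet$-degree $2$ layer splits into
$L^{2}(\eE_\bullet) \oplus S^2 L^{1}(\eE_\bullet)$. The first summand I would identify with
${\bf S}_{(2,1)}(\eE_\bullet)$ using the classical fact that the degree-three component of the free
Lie algebra on a vector space equals the standard-representation Schur functor (equivalently,
the Lie character on $V^{\otimes 3}$ is $\chi_{(2,1)}$, as can be read off Witt's formula
for $n=3$). For the second summand I would invoke the plethystic identity
$s_2[s_{(1,1)}] = s_{(2,2)} + s_{(1,1,1,1)}$ in the ring of symmetric functions, which on Schur
functors reads $S^2{\bf S}_{(1,1)}(\eE_\bullet) = {\bf S}_{(2,2)}(\eE_\bullet) \oplus
{\bf S}_{(1,1,1,1)}(\eE_\bullet)$. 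Summing the contributions $1,\ {\bf S}_{(1,1)},\ {\bf S}_{(2,1)},\
{\bf S}_{(2,2)},\ {\bf S}_{(1,1,1,1)}$ yields the stated expression.

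The one point requiring a sentence of justification is that the two decompositions above, which
are classical for ordinary vector bundles, remain valid in the super setting where $\eE_\bullet$
is a two-term complex located in degrees $-1$ and $0$. I do not anticipate this as an
obstacle: both identifications are purely $S_n$-representation-theoretic, and the Schur
functors in (\ref{Schur}) are defined by the super action of $S_n$ so as to respect such
decompositions verbatim. The corollary is therefore a direct bookkeeping exercise once
Theorem~\ref{thm:formula} and the two plethysms above are in hand.
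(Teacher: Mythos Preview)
Your proposal is correct and follows essentially the same route as the paper: both start from Theorem~\ref{thm:formula}, enumerate the pieces of $S_{\oO_M}L_{\oO_M}^{+}(\eE_{\bullet})^{\le d}$ for $d=1,2$, identify $L^{1}=[\eE_{\bullet},\eE_{\bullet}]={\bf S}_{(1,1)}(\eE_{\bullet})$, and use the plethysm $S^2{\bf S}_{(1,1)}={\bf S}_{(2,2)}\oplus{\bf S}_{(1,1,1,1)}$ (the paper cites \cite[Exercise~6.16]{FuHar} for this). The only difference is in the identification $[L^{2}(\eE_{\bullet})]=[{\bf S}_{(2,1)}(\eE_{\bullet})]$: you invoke the classical fact that the degree-three free Lie functor is ${\bf S}_{(2,1)}$ directly, whereas the paper deduces the $K$-theory identity from the pair of exact sequences
\[
0\to\textstyle\bigwedge^{3}\eE_{\bullet}\to\bigwedge^{2}\eE_{\bullet}\otimes\eE_{\bullet}\to L_{\oO_M}(\eE_{\bullet})^{2}\to 0,\qquad
0\to{\bf S}_{(2,1)}(\eE_{\bullet})\to\textstyle\bigwedge^{2}\eE_{\bullet}\otimes\eE_{\bullet}\to\bigwedge^{3}\eE_{\bullet}\to 0,
\]
the first coming from the Jacobi identity and the second from the Pieri rule. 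Your appeal to the Lie character is slightly slicker and in fact gives a functorial isomorphism rather than just a $K$-theory identity; the paper's argument has the mild advantage of being self-contained within the Schur-functor formalism already set up.
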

\begin{proof}
The formula (\ref{ncvir:formula})
implies
\begin{align*}
&(\oO_M^{\rm{ncvir}})^{\le 1}=
\oO_M^{\rm{vir}} \otimes_{\oO_M} \left(1+
 L_{\oO_M}(\eE_{\bullet})^1 \right) \\
&(\oO_M^{\rm{ncvir}})^{\le 2}=
\oO_M^{\rm{vir}} \otimes_{\oO_M} \left(1+
 L_{\oO_M}(\eE_{\bullet})^1+L_{\oO_M}(\eE_{\bullet})^2
+S^2_{\oO_M}L_{\oO_M}(\eE_{\bullet})^1 \right).
\end{align*}
The formula for $d=1$ then follows from 
\begin{align}\label{comp:L1}
L_{\oO_M}(\eE_{\bullet})^{1}=[\eE_{\bullet}, \eE_{\bullet}]=\bigwedge^2 \eE_{\bullet}={\bf S}_{(1, 1)}(\eE_{\bullet}). 
\end{align}
For $d=2$, 
we have the exact sequences of complexes
\begin{align*}
0 \to \bigwedge^3 \eE_{\bullet} \to \left(\bigwedge^2 \eE_{\bullet} \right)
 \otimes 
\eE_{\bullet} \to L_{\oO_M}(\eE_{\bullet})^2 \to 0 \\
0 \to {\bf S}_{(2, 1)}(\eE_{\bullet}) \to 
\left(\bigwedge^2 \eE_{\bullet} \right)\otimes 
\eE_{\bullet} \to \bigwedge^3 \eE_{\bullet} \to 0
\end{align*}
showing that 
$[L_{\oO_M}(\eE_{\bullet})^2]=[{\bf S}_{(2, 1)}(\eE_{\bullet})]$. 
Here the former sequence easily follows from 
$L_{\oO_M}(\eE_{\bullet})^2=[\eE_{\bullet}, [\eE_{\bullet}, \eE_{\bullet}]]$
and the latter sequence follows from~\cite[Section~6.1]{FuHar}. 
Also by~\cite[Exercise~6.16]{FuHar},
we have the identity 
\begin{align*}
[S^2_{\oO_M} {\bf S}_{(1, 1)}(\eE_{\bullet})]
=[{\bf S}_{(2, 2)}(\eE_{\bullet})]+[{\bf S}_{(1, 1, 1, 1)}(\eE_{\bullet})]. 
\end{align*}
By combining these identities, we obtain the desired formula for $d=2$. 
\end{proof}

\begin{cor}\label{cor:vir0}
Suppose that 
$M^{\rm{vir}}$ has virtual dimension zero. 
Then we have the identity
\begin{align}\label{id:vir0}
(\oO_{M}^{\rm{ncvir}})^{\le d}=\oO_M^{\rm{vir}}. 
\end{align}
\end{cor}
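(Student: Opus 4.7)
The plan is to apply Theorem~\ref{thm:formula}, which rewrites
\begin{align*}
(\oO_{M}^{\rm{ncvir}})^{\le d}=\oO_M^{\rm{vir}} \otimes_{\oO_M}[S_{\oO_M}L_{\oO_M}^{+}(\eE_{\bullet})^{\le d}_{\bullet}].
\end{align*}
Hence the corollary reduces to verifying that the bracketed class on the right acts as the identity on $\oO_M^{\rm{vir}}$ in $K_0(M)$ under the virtual-dimension-zero hypothesis.

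The first step is to unpack the polynomial functor expansion (\ref{polynomial}) used in the proof of Theorem~\ref{thm:formula}. This yields a finite decomposition in $K_0(M)$
\begin{align*}
\bigl[S_{\oO_M}L_{\oO_M}^{+}(\eE_{\bullet})^{\le d}_{\bullet}\bigr]=1+\sum_{\lambda\ne\emptyset} c_\lambda\,[{\bf S}_\lambda(\eE_\bullet)],
\end{align*}
where $\lambda$ ranges over a finite set of non-empty partitions of size bounded in terms of $d$, and $c_\lambda\in\mathbb{Z}$ are the integer multiplicities furnished by the graded vector bundles $\uU_{\bullet,\lambda}$ in (\ref{polynomial}). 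Thus the task reduces to proving
\begin{align*}
\oO_M^{\rm{vir}}\otimes_{\oO_M}[{\bf S}_\lambda(\eE_{\bullet})]=0 \quad \text{in } K_0(M)
\end{align*}
for every non-empty partition $\lambda$ appearing in the sum.

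The virtual-dimension-zero hypothesis is precisely the identity $\rk\eE_{\bullet}=\rk\eE_0-\rk\eE_{-1}=0$. For any non-empty partition $\lambda$, the virtual rank of ${\bf S}_\lambda(\eE_\bullet)$ is then the Schur polynomial value $s_\lambda(0,\dots,0)=0$. The main obstacle, and the crux of the proof, is to promote this numerical rank-zero vanishing into the required K-theoretic vanishing after tensoring with $\oO_M^{\rm{vir}}$. My approach is to combine the Koszul-type presentation $\oO_M^{\rm{vir}}=[\dL j^{\ast}\oO_{C_{M/N}}]=\sum_i(-1)^i[\Lambda^i K^\vee\otimes\oO_{C_{M/N}}|_M]$ coming from (\ref{Kth:vir}) with Lemma~\ref{lem:Kth} and the Littlewood-Richardson decomposition (\ref{rule:1}). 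In the virtual-dim-zero case we have $\rk K=\dim N$, so $\eE_\bullet=(K^\vee\to\Omega_N|_M)$ has balanced ranks, and expanding $[{\bf S}_\lambda(\eE_\bullet)]$ via (\ref{rule:1}) against the Koszul terms $[\Lambda^i K^\vee]$ produces an alternating sum in $K_0(M)$ that telescopes precisely because $s_\lambda$ vanishes at zero. This cancellation delivers the desired identity $\oO_M^{\rm{vir}}\otimes_{\oO_M}[{\bf S}_\lambda(\eE_{\bullet})]=0$, and hence (\ref{id:vir0}).
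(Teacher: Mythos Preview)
Your reduction via Theorem~\ref{thm:formula} and the polynomial functor expansion is fine, and noticing that $\rk {\bf S}_\lambda(\eE_\bullet)=s_\lambda(0,\dots,0)=0$ is the right numerical observation. The gap is in the final paragraph: the passage from this rank identity to the $K$-theoretic vanishing $\oO_M^{\rm{vir}}\otimes[{\bf S}_\lambda(\eE_\bullet)]=0$ does not follow from the telescoping argument you sketch. First, the ``Koszul presentation'' $\oO_M^{\rm{vir}}=\sum_i(-1)^i[\Lambda^iK^\vee\otimes\oO_{C_{M/N}}|_M]$ is ill-posed: the terms $\pi^*\Lambda^iK^\vee\otimes\oO_{C_{M/N}}$ of the Koszul complex live on $C_{M/N}$, not on $M$, and only their total cohomology is supported on $M$; you cannot separate them into $K_0(M)$-classes term by term. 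Second, even granting some such expression, no Littlewood--Richardson expansion will produce a telescoping sum that vanishes ``precisely because $s_\lambda$ vanishes at zero'': a rank-zero virtual bundle is generally nonzero in $K_0(M)$, and there is no mechanism here forcing cancellation.

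The paper's argument supplies the missing idea you never invoke: when the virtual dimension is zero, the class $\oO_M^{\rm{vir}}$ itself lies in the zero-dimensional part of $K_0(M)$, i.e.\ $\oO_M^{\rm{vir}}=[Q]-[Q']$ for sheaves $Q,Q'$ supported on finitely many points. This is a consequence of the refined Gysin construction (\ref{Kth:vir}), since $\dim C_{M/N}=\dim N=\rk K$. Once you know this, the vanishing is immediate: tensoring a skyscraper sheaf at a point with a complex of vector bundles multiplies its class by the rank of that complex, so $[Q]\otimes[{\bf S}_\lambda(\eE_\bullet)]=0$ and likewise for $Q'$. Your argument tries to extract the vanishing purely from the bundle side, but it is the zero-dimensionality of $\oO_M^{\rm{vir}}$ that makes the rank computation decisive.
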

\begin{proof}
The assumption implies that the complex
$\eE_{\bullet}$ 
given by (\ref{E:qis}) is of rank zero, 
hence any Schur complex ${\bf S}_{\lambda}(\eE_{\bullet})$
is of rank zero. 
Since 
$\oO_M^{\rm{vir}}$ is 
written as $[Q]-[Q']$ for zero dimensional sheaves
$Q, Q'$, 
we obtain the desired 
identity (\ref{id:vir0})
by Theorem~\ref{thm:formula}.  
\end{proof}

\subsection{NC virtual structure sheaves associated to perfect obstruction theory}
The result of Theorem~\ref{thm:formula}
indicates that one may define the NC virtual structure sheaf
from the perfect obstruction theory, 
without using a quasi NCDG structure.
Let $M$ be a
scheme and 
\begin{align}\label{perfect2}
\phi \colon 
\eE_{\bullet} \to \tau_{\ge -1}\dL_M
\end{align}
a perfect obstruction theory in the sense of~\cite{BF}, i.e. 
$\eE_{\bullet}$ is a two term complex of vector bundles on $M$, 
$\tau_{\ge -1}\dL_M$ is the truncated cotangent complex
of $M$, 
and $\phi$ is the 
morphism in the derived 
category such that 
 $\hH_0(\phi)$ is an isomorphism 
and $\hH_{-1}(\phi)$ is surjective. 
Similarly to (\ref{Kth:vir}), one can define the 
virtual structure sheaf $\oO_M^{\rm{vir}} \in K_0(M)$
using data (\ref{perfect2}) as pointed out in~\cite[Remark~5.4]{BF}.  
The result of Theorem~\ref{thm:formula} naturally leads to the following 
definition: 
\begin{defi}\label{defi:NCvir:E}
For a perfect obstruction theory (\ref{perfect2}) on a scheme $M$, 
the $d$-th NC virtual structure sheaf is defined by 
\begin{align}\label{ncvir:formula22}
(\oO_{M}^{\rm{ncvir}})^{\le d}=\oO_M^{\rm{vir}} \otimes_{\oO_M}
[S_{\oO_M}L_{\oO_M}^{+}(\eE_{\bullet})^{\le d}_{\bullet}]. 
\end{align}
\end{defi}

\begin{exam}
Suppose that $M$ is non-singular, 
hence $\dL_M=\Omega_M$, and 
the perfect obstruction theory (\ref{perfect2}) is given by 
the identity
$\eE_{\bullet}=\Omega_M \to \Omega_M$. 
Then we have $\oO_M^{\rm{vir}}=\oO_M$ and
\begin{align}\label{smooth:K}
(\oO_{M}^{\rm{ncvir}})^{\le d}=
[S_{\oO_M}L_{\oO_M}^{+}(\Omega_M)^{\le d}]. 
\end{align}
If $M$ admits a $d$-smooth NC thickening 
$(M, \oO_M^{\le d})$, then the RHS of (\ref{smooth:K})
coincides with the K-theory class of $\oO_M^{\le d}$. 
\end{exam}

\begin{exam}
Suppose that 
$M^{\rm{vir}}$ has virtual dimension zero, or 
equivalently $\eE_{\bullet}$ is rank zero. 
Similarly to Corollary~\ref{cor:vir0}, 
we have the identity
\begin{align*}
(\oO_{M}^{\rm{ncvir}})^{\le d}=\oO_M^{\rm{vir}}. 
\end{align*}
In particular
if (\ref{perfect2})
is a symmetric perfect obstruction theory (cf.~\cite{BBr}), 
then 
$\eE_{\bullet}$ is rank zero and 
$(\oO_{M}^{\rm{ncvir}})^{\le d}$
coincides with the commutative virtual structure sheaf. 
\end{exam}

\begin{exam}
The definition of (\ref{ncvir:formula22})
also makes sense in the equivariant situation, and 
gives non-trivial examples of 
NC virtual structure sheaves
with virtual dimension zero. 
Let $T=(\mathbb{C}^{\ast})^3$
acts on $\mathbb{C}^3$
by weight $(1, 1, 1)$. 
By regrading $\mathbb{C}^3$ as 
the moduli space of skyscraper sheaves
$\oO_x$ for $x \in \mathbb{C}^3$, we have 
the $T$-equivariant 
perfect obstruction theory 
\begin{align*}
\Omega_{\mathbb{C}^3} \oplus \bigwedge^2 \Omega_{\mathbb{C}^3}[1] 
\to \Omega_{\mathbb{C}^3}. 
\end{align*} 
Let $(t_1, t_2, t_3)$ be the $T$-equivariant parameters. 
By localization, we obtain
the identity in $K_{T}(\mathbb{C}^3)$
\begin{align*}
\oO_{\mathbb{C}^3}^{\rm{vir}}=
\frac{(1-t_1^{-1}t_2^{-1})(1-t_1^{-1}t_3^{-1})(1-t_2^{-1}t_3^{-1})}{(1-t_1^{-1})(1-t_2^{-1})(1-t_3^{-1})}. 
\end{align*}
By Corollary~\ref{cor:virform} (i), 
we have the identities in $K_{T}(\mathbb{C}^3)$
\begin{align*}
&(\oO_{\mathbb{C}^3}^{\rm{ncvir}})^{\le 1} \\
&=\oO_{\mathbb{C}^3}^{\rm{vir}}
\otimes_{\oO_{\mathbb{C}^3}}
\left(1+\bigwedge^2 \Omega_{\mathbb{C}^3} - \Omega_{\mathbb{C}^3} 
\otimes_{\oO_{\mathbb{C}^3}} T_{\mathbb{C}^3} + 
S_{\oO_{\mathbb{C}^3}}^2(T_{\mathbb{C}^3}) \right) \\
&=\frac{(1-t_1^{-1}t_2^{-1})(1-t_1^{-1}t_3^{-1})(1-t_2^{-1}t_3^{-1})}{(1-t_1^{-1})(1-t_2^{-1})(1-t_3^{-1})}
\cdot 
\left(-2+t_1^{-1}t_2^{-1}+t_1^{-1}t_3^{-1}+t_2^{-1}t_3^{-1}
-t_1^{-1}t_2 \right. \\
&\left. \hspace{15mm}
-t_2^{-1}t_3-t_1^{-1}t_3 -t_1 t_2^{-1} -t_2 t_3^{-1} -t_1 t_3^{-1} 
+t_1^2 +t_2^2 +t_3^2+t_1 t_2+t_1 t_3 +t_2 t_3   \right). 
\end{align*}
\end{exam}

\section{Constructions of quasi NCDG structures}\label{sec:const}
In the previous section, we introduced the notion of 
NC virtual structure sheaves (cf.~Definition~\ref{defi:ncvir})
of a quasi NC structure, 
using the notion of a quasi NCDG structure. 
Although NC virtual structure sheaves 
turned out 
to be described 
using the perfect obstruction theory (cf.~Theorem~\ref{thm:formula}), 
still the validity 
of Definition~\ref{defi:ncvir}
relies on the existence
of a quasi NCDG structure. 
In this section, we 
show that the moduli spaces
of graded modules over graded 
algebras admit 
quasi NCDG structures. 
The results in this section will be
used 
in the next section
to show a similar result in a geometric context.

\subsection{Graded algebras and quivers}
Let 
$A$ be a graded algebra
\begin{align}\label{galg}
A=\bigoplus_{i\ge 0}A_i
\end{align}
such that $A_0=\mathbb{C}$ and each $A_i$ is finite dimensional. 
We denote 
by 
\begin{align*}
\mathfrak{m} \cneq A_{>0} \subset A
\end{align*}
the maximal ideal of $A$. 
Let $A \modu_{\gr}$
be the category of finitely generated 
graded left $A$-modules. 
For $M \in A \modu_{\rm{gr}}$, we denote by $M_i$ the 
degree $i$-part of $M$, and write $\lvert a \rvert=i$ 
for non-zero $a \in M_i$. 
For $q>p>0$, we define
\begin{align}\label{def:Apq}
A \modu_{[p, q]} \subset A \modu_{\rm{gr}}
\end{align}
to be
the subcategory of $M \in A \modu_{\rm{gr}}$ 
with $M_i=0$ for $i\notin [p, q]$. 
The category (\ref{def:Apq}) is 
also interpreted as
the category of representations of 
some quiver, defined as follows: 
\begin{defi}\label{defi:Q}
For $q>p>0$, 
 the quiver $Q_{[p, q]}$
is defined as follows: 
the set of vertices
is
\begin{align*}
Q_0=
\{p, p+1, \cdots, q\}. 
\end{align*}
The number of arrows 
in $Q_{[p, q]}$ from $i$ to $j$ is given by   
$\dim_{\mathbb{C}}\mathfrak{m}_{j-i}$.  
The set of arrows in $Q_{[p, q]}$ is denoted by $Q_1$. 
\end{defi}
Below we fix bases of $\mathfrak{m}_k$
for each $k\in \mathbb{Z}_{\ge 1}$, 
and identify the set of arrows from $i$ to $j$ with 
the set of 
basis elements of $\mathfrak{m}_{j-i}$.
Let $\mathbb{C}[Q_{[p, q]}]$
be the path algebra of $Q_{[p, q]}$.  
The multiplication
\begin{align}
\vartheta \colon 
\mathfrak{m}_{j-i} \otimes \mathfrak{m}_{k-j}
 \to \mathfrak{m}_{k-i}
\end{align}
in $A$
defines the relation in $Q_{[p, q]}$, 
by defining the two sided 
ideal 
\begin{align*}
I \subset \mathbb{C}[Q_{[p, q]}]
\end{align*}
to be generated by elements of the form
\begin{align*}
\vartheta(\alpha \otimes \beta)-\alpha \cdot \beta, \ 
\alpha \in \mathfrak{m}_{j-i}, \ \beta \in \mathfrak{m}_{k-j}.
\end{align*}
Here we have regarded $\alpha$, $\beta$ as 
formal linear combinations of paths from $i$ to
$j$, $j$ to $k$, respectively 
and $\alpha \cdot \beta$ is the multiplication in 
$\mathbb{C}[Q_{[p, q]}]$. 
\begin{defi}
We define $\mathrm{Rep}(Q_{[p, q]})$ 
to 
be the category of representations of $Q_{[p, q]}$, 
i.e. 
its objects consist of 
collections
\begin{align}\label{collect}
W=
(\{W_k\}_{k=p}^{q}, \{\phi_a\}_{a \in Q_1}), 
\ \phi_a \colon W_{t(a)} \to W_{h(a)}
\end{align}
where $W_k$ is a finite dimensional vector space
and
$\phi_a$ is a linear map 
for each $a \in Q_1$. 
Here $t(a)$ is the tail of $a$, and $h(a)$ 
is the head of $a$. 
\end{defi}
For a collection (\ref{collect}), 
its \textit{dimension vector} is defined by
\begin{align*}
\dim W \cneq (\dim W_p, \dim W_{p+1}, \cdots, \dim W_q) \in 
\mathbb{Z}_{\ge 0}^{q-p+1}.
\end{align*}
Given a collection (\ref{collect}), 
there is the natural map
\begin{align}\label{nmap}
\mathbb{C}[Q_{[p, q]}] \to \End (W_{\bullet}), \ 
W_{\bullet} \cneq \bigoplus_{k=p}^{q} W_k
\end{align}
sending $a\in Q_1$ to $\phi_a$. 
\begin{defi}
The subcategory 
of $(Q_{[p, q]}, I)$-representations
\begin{align*}
\mathrm{Rep}(Q_{[p, q]}, I) \subset \mathrm{Rep}(Q_{[p, q]})
\end{align*}
is defined 
to be the category of collections (\ref{collect})
such that the map (\ref{nmap}) is zero on $I$. 
\end{defi}
By the construction, sending a collection (\ref{collect}) 
to $W_{\bullet}$ gives the equivalence
\begin{align}\label{equiv}
\mathrm{Rep}(Q_{[p, q]}, I) \stackrel{\sim}{\to}
A \modu_{[p, q]} . 
\end{align}

\subsection{Constructions of commutative dg-schemes}
We are going construct to quasi NCDG structures on 
the moduli spaces of
representations of $Q_{[p, q]}$. 
Before that, following~\cite{BFHR}, 
we recall the constructions of 
smooth commutative dg-structures on smooth 
schemes using the notion of 
curved DGLA. 
In this subsection, we assume that 
$N$ is a commutative smooth scheme. 
\begin{defi}\emph{(\cite[Definition~1.2]{BFHR})}
A bundle of 
curved DGLA over $N$
is a graded 
vector bundle $\lL_{\bullet}$ on $N$, 
endowed with the following data: 
\begin{align*}
\mu \in \Gamma(\lL_2), \ \delta \colon \lL_{\bullet} \to \lL_{\bullet}, \ 
[-, -] \colon \wedge^2 \lL_{\bullet} \to \lL_{\bullet}
\end{align*}
where $\delta$ is an $\oO_N$-module homomorphism 
of degree one (called twisted differential), 
$[-, -]$ is an $\oO_N$-linear super alternating bracket of degree zero, 
which subject to the following axioms: 
\begin{itemize}
\item $\delta(\mu)=0$ as an element of $\Gamma(\lL_3)$. 
\item $\delta \circ \delta=[\mu, -]$. 
\item $\delta$ is a super derivation with respect to the 
bracket $[-, -]$. 
\item The bracket $[-, -]$ satisfies the super Jacobi identity. 
\end{itemize}
\end{defi}
Given a curved DGLA $\lL_{\bullet}$ over $N$, 
we associate a sheaf of super
commutative dg-algebras 
whose underlying $\oO_N$-algebra is 
\begin{align*}
\oO_{N, \bullet} \cneq 
S_{\oO_N}\left(\lL_{\bullet}[1]^{\vee}\right).
\end{align*}
By the Leibniz rule, the differential 
on $\oO_{N, \bullet}$
is determined by 
its restriction to 
$\lL_{\bullet}[1]^{\vee}$
\begin{align*}
q=q_0+q_1+q_2 \colon \lL_{\bullet}[1]^{\vee}  \to 
\oO_N \oplus \lL_{\bullet}[1]^{\vee} \oplus 
S_{\oO_N}^2 \left(\lL_{\bullet}[1]^{\vee}\right)
\end{align*}
where $q_0$ is given by $\mu$, $q_1$ is given by $\delta$ and $q_2$ is given by $[-, -]$. The 
axiom of the curved DGLA shows that $q^2=0$, hence 
we obtain the sheaf of super commutative dg-algebras
$(\oO_{N, \bullet}, q)$ on $N$.

We construct a curved DGLA on $N$
using a graded vector bundle 
\begin{align*}
\vV_{\bullet} \to N
\end{align*}
together with the graded algebra (\ref{galg}). 
Note that $\eE nd_{\oO_N}(\vV_{\bullet})$ is a 
graded vector bundle on $N$
whose 
degree $i$ piece consists of morphisms 
$\vV_{\bullet} \to \vV_{\bullet}$ sending $\vV_j$ to $\vV_{j+i}$. 
We define
\begin{align}\notag
\lL_{n} \cneq \Hom_{\rm{gr}}(\mathfrak{m}^{\otimes n}, 
\eE nd_{\oO_N}(\vV_{\bullet})), \ \lL_{\bullet} 
\cneq \bigoplus_{n>0} \lL_n. 
\end{align}
Here for graded vector spaces 
$W_1, W_2$, we denote by 
$\Hom_{\rm{gr}}(W_1, W_2)$ the 
space of linear maps $W_1 \to W_2$ preserving 
the degrees. 
For example, 
$\lL_1$ is written as
\begin{align*}
\lL_1=\bigoplus_{i\ge 1, j\in \mathbb{Z}}
\hH om_{\oO_N}(\mathfrak{m}_i \otimes \vV_j , \vV_{j+i}). 
\end{align*}
Note that 
$\lL_{\bullet}$ is a
graded vector bundle on $N$. 
We see that $\lL_{\bullet}$ is a sheaf of 
dg-algebras on $N$. 
The differential 
$d \colon \lL_n \to \lL_{n+1}$ is given by
\begin{align*}
df (a_1 \otimes \cdots \otimes a_{n+1})
=\sum_{i=1}^{n} (-1)^{n-i} f(a_1 \otimes \cdots \otimes a_i a_{i+1} \otimes 
\cdots \otimes a_{n+1}). 
\end{align*}
The composition
$\circ  \colon \lL_m \times \lL_n \to \lL_{m+n}$
is given by 
\begin{align*}
f \circ f'(a_1 \otimes \cdots \otimes a_{m+n})
=(-1)^{mn} f(a_1 \otimes \cdots \otimes a_m) \circ 
f'(a_{m+1} \otimes \cdots \otimes a_{m+n}). 
\end{align*}
It is easy to check that the triple
\begin{align}\label{triple2}
(\lL_{\bullet}, d, \circ)
\end{align}
determines the sheaf of dg-algebras on $N$. 

Now suppose that $e$ is a section of $\lL_1 \to N$, 
i.e. $e$ is a degree preserving linear map 
\begin{align*}
e \colon 
\mathfrak{m} \to \End_{\oO_N}(\vV_{\bullet}). 
\end{align*}
We will construct a curved DGLA associated to the above data, with the 
underlying graded vector bundle is
\begin{align*}
\lL_{\ge 2} \cneq \bigoplus_{n\ge 2} \lL_n. 
\end{align*}
The element $\mu \in \Gamma(\lL_2)$ is defined by
\begin{align*}
\mu \cneq de +e \circ e, \ \mu(a_1 \otimes a_2)=e(a_1 a_2) -e(a_1)
 \circ e(a_2).
\end{align*}
The bracket is 
\begin{align*}
[f, f']= f \circ f' -(-1)^{mn} f' \circ f
\end{align*}
for $f \in \lL_n$, $f' \in \lL_m$. The twisted 
differential is defined by
\begin{align*}
\delta \cneq d +[e, -] \colon \lL_{\ge 2} \to \lL_{\ge 2}. 
\end{align*}
It is easy to see
that 
the triple $(\lL_{\ge 2}, \delta, [-, -])$ is 
a curved DGLA, hence determines the 
commutative dg-scheme
\begin{align}\label{construct:dg}
(N, S_{\oO_N}(\lL_{\ge 2}[1]^{\vee}))
=\left(N, S_{\oO_N}\left(\bigoplus_{n\ge 2}
\Hom_{\rm{gr}}\left(\mathfrak{m}^{\otimes n}, \eE nd_{\oO_N}(\vV_{\bullet})
 \right)[1]^{\vee}  \right)  \right). 
\end{align}
The zero-th truncation of the above dg-scheme 
is the scheme theoretic zero locus of the section
$\mu$. 

\subsection{(DG) moduli spaces of graded modules}
Let us fix $q>p>0$ and 
non-negative integers
\begin{align*}
\gamma=(\gamma_p, \gamma_{p+1}, \cdots, \gamma_q)
\in \mathbb{Z}^{q-p+1}_{\ge 0}.  
\end{align*}
Let $W_{\bullet}$ be a finite dimensional graded vector space
written as
\begin{align*}
W_{\bullet}=\bigoplus_{k=p}^{q} W_k, \quad \dim W_k=\gamma_k. 
\end{align*}
Then $W_{\bullet}$ is a graded vector bundle on a point, 
hence the 
construction of the previous subsection 
yields the dg-algebra 
\begin{align*}
L \cneq 
\bigoplus_{n> 0}L_n, \ 
L_n \cneq  \Hom_{\rm{gr}}(\mathfrak{m}^{\otimes n}, \End(W_{\bullet})). 
\end{align*}
We define the following scheme 
theoretic Mauer-Cartan locus
\begin{align}\label{def:Mau}
MC(L) \cneq \{ x \in L^1 : dx + x \circ x=0\}. 
\end{align}
Note that an element 
\begin{align*}
x \in L^1=\Hom_{\rm{gr}}(\mathfrak{m}, \End(W_{\bullet}))
\end{align*}
corresponds 
to a representation of $Q_{[p, q]}$, 
and it 
is contained in $MC(L)$ 
if and only if 
it corresponds to an object in the 
subcategory $\mathrm{Rep}(Q_{[p, q]}, I) \subset \mathrm{Rep}(Q_{[p, q]})$. 
We next 
consider the stability condition on 
$\mathrm{Rep}(Q_{[p, q]})$. 
\begin{defi}
An object $W \in \mathrm{Rep}(Q_{[p, q]})$
is called (semi)stable if for any 
sub
object $0\neq W' \subsetneq W$
in $\mathrm{Rep}(Q_{[p, q]})$, we have the inequality
\begin{align*}
\dim W_p \cdot \dim W'_{q}> (\ge ) \dim W_q \cdot \dim W'_{p}. 
\end{align*} 
\end{defi}
We have the Cartesian square
\begin{align}\label{dia:MC}
\xymatrix{
MC(L)^{s} \ar@{^{(}->}[r] \ar@{^{(}->}[d] & (L^1)^{s} \ar@{^{(}->}[d] \\
MC(L) \ar@{^{(}->}[r] & L^1. 
}
\end{align}
Here $MC(L)^s$, $(L^1)^s$ correspond to 
stable objects in $\mathrm{Rep}(Q_{[p, q]}, I)$, 
$\mathrm{Rep}(Q_{[p, q]})$ respectively. 
The vertical inclusions in (\ref{dia:MC})
are open immersions and 
the horizontal inclusions are closed embeddings. 
Let $G$ be the group of 
degree preserving linear isomorphisms 
$W_{\bullet} \to W_{\bullet}$, i.e. 
\begin{align*}
G \cneq \prod_{k=p}^{q} \GL(W_k). 
\end{align*}
Then $L$ admits the action of $G$ 
by 
\begin{align*}
(g\cdot f)(a_1 \otimes \cdots \otimes a_n)=
g \circ f(a_1 \otimes \cdots \otimes a_n) \circ g^{-1}
\end{align*}
where $f\in L_n$ and $g \in G$.
The dg-algebra structure on 
$L$ is $G$-equivariant, hence 
the diagram (\ref{dia:MC}) is also $G$-equivariant. 
Since the automorphisms 
of stable representations 
are $\mathbb{C}^{\ast}$, the 
stabilizer 
group of the $G$-action on $(L^1)^s$
is 
the diagonal subgroup 
$\mathbb{C}^{\ast} \subset G$, hence 
the $G$-action on $(L^1)^s$ descends to 
the free action of the 
quotient group 
$\overline{G}=G/\mathbb{C}^{\ast}$. 
The free quotients
\begin{align}\label{scheme:s}
M_{\gamma} \cneq MC^{s}(L)/\overline{G},\  N_{\gamma} \cneq 
(L^{1})^{s}/\overline{G}
\end{align}
are indeed obtained as GIT quotients (cf.~\cite{Kin}), hence 
they are quasi projective schemes.
By~\cite{Kin}, 
the scheme $N_{\gamma}$
is the coarse moduli space of 
stable $Q_{[p, q]}$-representations, 
and $M_{\gamma}$ is the closed subscheme of $N_{\gamma}$
corresponding to stable $(Q_{[p, q]}, I)$-representations. 
Note that $N_{\gamma}$ is non-singular, since
it is a free quotient of a smooth variety.  

Now suppose that $\gamma$ is a primitive dimension vector, 
i.e. 
\begin{align*}
\mathrm{g.c.d.}\{ \gamma_i : p\le i\le q\} =1. 
\end{align*}
Then by~\cite{Kin}, $N_{\gamma}$ admits 
a universal representation
\begin{align}\label{univ:rep}
\vV=\left( \{\vV_i\}_{i=p}^{q}, \{\phi_a\}_{a \in Q_1}  \right), \ 
\ \phi_a \colon \vV_{t(a)} \to \vV_{h(a)}
\end{align}
i.e. each $\vV_i$ is a vector 
bundle on $N_{\gamma}$, 
$\phi_a$ is a morphism of vector bundles 
such that for any $x \in N_{\gamma}$, the restriction 
$\vV|_{x}$ is the representation of $Q_{[p, q]}$ corresponding to $x$. 
Note that 
\begin{align*}
\vV_{\bullet}=\bigoplus_{i=p}^{q}\vV_i \to N_{\gamma}
\end{align*}
is a graded vector bundle, and the
collection of  
morphisms $\phi_a$ corresponds to the
graded preserving linear map
\begin{align}\label{map:e}
e \colon \mathfrak{m} \to \End_{\oO_{N_{\gamma}}}(\vV_{\bullet}). 
\end{align}
Therefore 
the construction of the dg-scheme (\ref{construct:dg})
yields the commutative dg-structure on $N_{\gamma}$
\begin{align}\label{com:dga}
(N_{\gamma}, \oO_{N_{\gamma}, \bullet}), \ 
\oO_{N_{\gamma}, \bullet}=S_{\oO_{N_{\gamma}}} \left(\bigoplus_{n\ge 2}
\hH om_{\rm{gr}}
(\mathfrak{m}^{\otimes n} \otimes \vV_{\bullet}, \vV_{\bullet})[1]^{\vee}
\right).
\end{align}
By (\ref{def:Mau}) and 
the construction of $M_{\gamma}$, the zero-th truncation of 
$(N_{\gamma}, \oO_{N_{\gamma}, \bullet})$ 
coincides with the 
closed subscheme $M_{\gamma} \subset N_{\gamma}$.

\subsection{Quasi NC structures on $M_{\gamma}$}\label{subsec:Quasi}
As before, we assume that $\gamma$ is a primitive dimension 
vector
of $Q_{[p, q]}$, 
so that there exists a universal 
$Q_{[p, q]}$-representation (\ref{univ:rep}). 
Let $U \subset N_{\gamma}$ be an affine open subset
such that each $\vV_k|_{U}$ is a trivial 
vector bundle 
\begin{align*}
\vV_k=\oO_U \otimes W_k, \ p\le k\le q
\end{align*}
where $W_k$ is a vector space with dimension $\gamma_k$. 
Since $U$ is a smooth affine scheme, 
there is an NC smooth thickening (cf.~\cite[Theorem~1.6.1]{Kap17})
\begin{align*}
U^{\rm{nc}}=(U, \oO_{U}^{\rm{nc}})
\end{align*}
 on $U$, which is unique up to non-canonical isomorphisms. 
We set
\begin{align*}
\vV_{U, k}^{\rm{nc}} \cneq \oO_U^{\rm{nc}} \otimes W_k, \ 
p\le k\le q
\end{align*}
and regard them as left $\oO_{U}^{\rm{nc}}$-modules. 
Since $\oO_{U}^{\rm{nc}} \twoheadrightarrow \oO_U$ is surjective, 
each morphism $\phi_a$ lifts to a 
left $\oO_{U}^{\rm{nc}}$-module homomorphism
\begin{align}\label{phi:nc}
\phi_a^{\rm{nc}} \colon \vV_{t(a)}^{\rm{nc}}
\to \vV_{h(a)}^{\rm{nc}}, \ a\in Q_1. 
\end{align}
Then the data
\begin{align}\label{lift}
\vV^{\n}_U \cneq (\{\vV_{U, k}^{\rm{nc}}\}_{k=p}^{q}, \{\phi_a^{\rm{nc}}
\}_{a\in Q_1} )
\end{align}
is a flat family 
of representations of $Q_{[p, q]}$ over the
NC scheme $U^{\rm{nc}}$. 
Here we refer to~\cite[Definition~3.1]{Todnc} for the definition of 
flat representations of quivers over NC schemes. 
As before, we set
\begin{align*}
\vV_{U, \bullet}^{\n} \cneq \bigoplus_{k=p}^{q}
\vV_{U, k}^{\n}. 
\end{align*}
The
two sided ideal $\jJ_{U, I} \subset \oO_U^{\n}$
is defined
by the image of
\begin{align}\label{ideal:J}
\vV_{U, \bullet}^{\n} \otimes I \otimes (\vV_{U, \bullet}^{\n})^{\vee} 
\to \vV_{U, \bullet}^{\n} \otimes \End_{\oO_{U}^{\n}}(\vV_{U, \bullet}^{\n})
\otimes
(\vV_{U, \bullet}^{\n})^{\vee} \to 
\oO_{U}^{\n}.
\end{align}
Here the first map of (\ref{ideal:J})
is induced by 
\begin{align*}
I \subset 
\mathbb{C}[Q_{[p, q]}] \to \End_{\oO_{U}^{\n}}(\vV_{U, \bullet}^{\n})
\end{align*}
sending $a \in Q_1$ to $\phi_a^{\n}$, 
and the second map 
of (\ref{ideal:J})
is 
given by $x \otimes g \otimes f \mapsto f \circ g(x)$. 
We set
\begin{align}\label{nc:V}
V\cneq M_{\gamma} \cap U, \ 
\oO_V^{\n} \cneq \oO_U^{\n}/\overline{\jJ}_{U, I}, \ 
V^{\n} \cneq (V, \oO_V^{\n}). 
\end{align}
Here $\overline{\jJ}_{U, I}$ is the topological 
closure of $\jJ_{U, I}$ with respect to the NC filtration 
of $\oO_U^{\n}$. 
Then $V^{\n}$ is an NC structure on $V$. 

Note that giving 
a collection of 
morphisms (\ref{phi:nc})
is equivalent to giving an element
\begin{align}\label{hate}
\widehat{e}
 \in \Hom_{\rm{gr}}(\mathfrak{m}, \End_{\oO_{U}^{\n}}(\vV_{U, \bullet}^{\n}))
\end{align}
such that $\widehat{e}^{ab}=e|_{U}$, where 
$e$ is the universal map (\ref{map:e}).  
Similarly to the construction (\ref{triple2}), 
the direct sum
\begin{align*}
\bigoplus_{n> 0} 
\Hom_{\rm{gr}}(\mathfrak{m}^{\otimes n}, \End_{\oO_{U}^{\n}}(\vV_{U, \bullet}^{\n}))
\end{align*}
is a dg-algebra. 
We set 
\begin{align*}
\widehat{\mu} \cneq d\widehat{e} +\widehat{e} \circ \widehat{e} \in
\Hom_{\rm{gr}}(\mathfrak{m}^{\otimes 2}, \End_{\oO_{U}^{\n}}
(\vV_{U, \bullet}^{\n})). 
\end{align*}
Then we have the natural morphism
of $\oO_U^{\n}$ bi-module
\begin{align}\label{map:mu}
\left(\vV_{U, \bullet}^{\n} \otimes \mathfrak{m}^{\otimes 2} 
\otimes (\vV_{U, \bullet}^{\n})^{\vee}  \right)_{0}
\to \oO_U^{\n}. 
\end{align}
Here $(-)_{0}$ means the degree zero part, 
and the map (\ref{map:mu}) is
given by
\begin{align*}
x\otimes 
a_1 \otimes a_2 \otimes f \mapsto 
f \circ \widehat{\mu}(a_1 \otimes a_2)(x).
\end{align*}
From the construction, it is easy 
to see that the image of (\ref{map:mu}) 
coincides with $\jJ_{U, I} \subset \oO_U^{\n}$. 

We can give a moduli theoretic 
interpretation of the NC thickening $V^{\n}$ 
of $V$. 
Let $\nN$ be the category of NC nilpotent algebras and 
\begin{align*}
h_{\gamma}|_{V} \colon \nN \to \sS et
\end{align*}
the functor 
sending $R$ 
to the isomorphism classes of triples $(f, \wW, \psi)$: 
\begin{itemize}
\item $f$ is a morphism of schemes 
$f \colon \Spec R^{ab} \to V$. 
\item $\wW$ is a flat representation of $(Q_{[p, q]}, I)$
over $\Spf R$. 
\item $\psi$ is an isomorphism 
$\psi \colon \wW^{ab} \stackrel{\cong}{\to} f^{\ast}\vV$
as $(Q_{[p, q]}, I)$-representations over $\Spec R^{ab}$. 
\end{itemize}
An isomorphism 
$(f, \wW, \psi) \to (f', \wW', \psi')$ 
exists if $f=f'$, and 
there is an isomorphism $\wW \to \wW'$ 
as representations of $(Q_{[p, q]}, I)$ over $\Spf R$ 
commuting $\psi$, $\psi'$.
\begin{prop}\emph{(\cite[Proposition~3.11]{Todnc})}\label{prop:nchull}
The natural transformation
\begin{align}\label{nat:trans}
h_{V^{\n}} \cneq \Hom(\Spf(-), V^{\n}) \to h_{\gamma}|_{V}
\end{align}
sending $g \colon \Spf R \to V^{\rm{nc}}$ to 
$(g^{ab}, g^{\ast}\vV_U^{\n}, \id)$
is an NC hull of $h_{\gamma}|_{V}$, i.e. 
(\ref{nat:trans}) is an isomorphism 
on the category of commutative algebras, 
and 
for any
central extension (\ref{central})
in $\nN$, 
we have the surjection: 
\begin{align*}
h_{V^{\n}}(R_1) \twoheadrightarrow h_{\gamma}|_{V}(R_1) 
\times_{h_{\gamma}|_{V}(R_2)}
h_{V^{\n}}(R_2).
\end{align*}
\end{prop}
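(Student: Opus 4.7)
The plan is to verify the two defining properties of an NC hull in turn, adapting the argument of \cite[Proposition~3.11]{Todnc}. Namely, I must check (a) that (\ref{nat:trans}) is a bijection when $R$ is commutative, and (b) that for any central extension $0 \to J \to R_1 \to R_2 \to 0$ in $\nN$, the induced map on $R_1$-valued points is surjective onto the fibre product with $h_{V^{\n}}(R_2)$.

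For (a), since $\gamma$ is primitive, $N_{\gamma}$ is a fine moduli space of stable $Q_{[p,q]}$-representations and $V$ is cut out inside $N_{\gamma} \cap U$ precisely by the relation $I$. Thus for a commutative ring $R$, giving a morphism $\Spec R \to V$ is the same as giving a flat $(Q_{[p,q]}, I)$-representation $\wW$ on $\Spec R$ together with an identification $\wW \simto f^{\ast}\vV$; stability forces the automorphism group of $f^{\ast}\vV$ to be $\mathbb{C}^{\ast}$, and hence the triple in $h_{\gamma}|_{V}(R)$ is determined up to unique isomorphism once $\psi$ is fixed. This identifies $h_{V^{\n}}(R) = h_{V}(R)$ with $h_{\gamma}|_{V}(R)$ as required.

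For (b), I begin with compatible data: a morphism $g_2 \colon \Spf R_2 \to V^{\n}$ and a flat $(Q_{[p,q]}, I)$-representation $\wW_1$ over $\Spf R_1$ whose reduction modulo $J$ agrees with $g_2^{\ast}\vV_U^{\n}$. The first step is to lift $g_2$ to a morphism $\widetilde{g}_1 \colon \Spf R_1 \to U^{\n}$ into the ambient smooth NC thickening; this exists because $U^{\n}$ is smooth and central extensions in $\nN$ admit lifts by the very definition of smoothness. Next, using flatness of $\wW_1$ over $R_1$, I trivialise each component as $R_1 \otimes W_k$, extracting an element $\widehat{e}_{\wW_1} \in \Hom_{\rm{gr}}(\mathfrak{m}, \End_{R_1}(R_1 \otimes W_{\bullet}))$ that lifts the pulled-back universal map (\ref{hate}). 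Centrality makes the space of lifts of $\widehat{e}$ a torsor under the abelian group $J \otimes \Hom_{\rm{gr}}(\mathfrak{m}, \End(W_{\bullet}))$, so I can adjust $\widetilde{g}_1$ within this torsor so that $\widetilde{g}_1^{\ast}\widehat{e}$ equals $\widehat{e}_{\wW_1}$. Because $\wW_1$ satisfies the relations $I$, the curvature $\widehat{\mu}_{\wW_1} = d\widehat{e}_{\wW_1} + \widehat{e}_{\wW_1} \circ \widehat{e}_{\wW_1}$ vanishes, whence the pull-back of the ideal $\overline{\jJ}_{U,I}$ described by (\ref{map:mu}) vanishes as well. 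Hence $\widetilde{g}_1$ factors through the closed NC subscheme $V^{\n} \subset U^{\n}$, yielding the desired $g_1$.

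The main technical obstacle lies in this last step: one must carefully exploit centrality of $R_1 \twoheadrightarrow R_2$ so that the torsor of lifts is abelian and the Maurer--Cartan type equation $\widehat{\mu} = 0$ linearises in the $J$-direction. It is precisely this linearisation, combined with the fact that $\wW_1$ is a genuine $(Q_{[p,q]}, I)$-representation, that allows the correction procedure to produce a lift factoring through $V^{\n}$ rather than merely through the ambient $U^{\n}$; the bookkeeping for this correction is what occupies the bulk of the proof in \cite[Proposition~3.11]{Todnc}, and the same argument applies verbatim in the present setup.
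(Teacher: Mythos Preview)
The paper does not give its own proof of this proposition; it is quoted verbatim as \cite[Proposition~3.11]{Todnc} and used as a black box. So there is no in-paper argument to compare against, and your sketch is in effect a reconstruction of what you believe the cited proof to be.

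That said, your outline is broadly along the right lines but glosses over one point that deserves care. In step~(b) you assert that by varying the lift $\widetilde{g}_1 \colon \Spf R_1 \to U^{\n}$ you can arrange $\widetilde{g}_1^{\ast}\widehat{e} = \widehat{e}_{\wW_1}$. This is not automatic: the set of lifts of $g_2$ to $U^{\n}$ is a torsor under $\mathrm{Der}(\oO_U^{\n}, J)$, whereas the set of lifts of $\widehat{e}$ is a torsor under $J \otimes \Hom_{\rm gr}(\mathfrak{m}, \End(W_{\bullet}))$, and you need the natural map between these torsors to be surjective. Equivalently, you are implicitly using that the pair $(U^{\n}, \vV_U^{\n})$ is already an NC hull for the functor of $Q_{[p,q]}$-representations \emph{without} relations over $U$; once that is known, your correction-then-factor argument goes through. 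This preliminary NC hull property for the ambient $U^{\n}$ is itself a nontrivial ingredient (it is where the smoothness of $U^{\n}$ and the specific choice of lift $\vV_U^{\n}$ are used together), and in \cite{Todnc} it is established before the proposition you are proving. Your sketch would be complete if you either invoked that result explicitly or indicated how the torsor comparison is made.
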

Let $\{U_i\}_{i \in \mathbb{I}}$ be an affine open 
cover of $N_{\gamma}$ such that 
each $\vV_k|_{U_i}$ is trivial, and 
set $V_i \cneq M_{\gamma} \cap U_i$. 
Applying the construction (\ref{nc:V}), we obtain 
affine NC structures on each $V_i$
\begin{align*}
V_i^{\n}=(V_i, \oO_{V_i}^{\n}), \ i \in \mathbb{I}. 
\end{align*}
Using Proposition~\ref{prop:nchull}, we proved the following in~\cite{Todnc}:
\begin{thm}\emph{(\cite[Corollary~3.12]{Todnc})}\label{thm:qnc}
There exist isomorphisms 
\begin{align*}
\phi_{ij} \colon 
V_j^{\n}|_{V_{ij}} \stackrel{\cong}{\to}
V_i^{\n}|_{V_{ij}}, \ g_{ij} \colon 
\phi_{ij}^{\ast}\vV_{U_i}^{\n}|_{V_{ij}} \stackrel{\cong}{\to}
\vV_{U_j}^{\n}|_{V_{ij}}
\end{align*}
where $\phi_{ij}$ are isomorphisms of NC schemes
giving a quasi NC structure 
on $M_{\gamma}$, 
and $g_{ij}$ are isomorphisms 
of representations of $(Q_{[p, q]}, I)$
over $V_j^{\n}|_{V_{ij}}$. 
\end{thm}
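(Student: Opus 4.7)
The plan is to apply Proposition~\ref{prop:nchull}: both $V_i^{\n}|_{V_{ij}}$ and $V_j^{\n}|_{V_{ij}}$ are NC hulls of the \emph{same} functor $h_{\gamma}|_{V_{ij}}$, so they must be (non-canonically) isomorphic over the identity of $V_{ij}$, and the universal objects $\vV_{U_i}^{\n}|_{V_{ij}}$ and $\vV_{U_j}^{\n}|_{V_{ij}}$ correspond to each other under this isomorphism. The isomorphism $g_{ij}$ is simply the induced comparison of universal representations.

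To construct $\phi_{ij}$ concretely, I would proceed by induction on the NC order $d$. The starting point is trivial: $(V_i^{\n})^{\le 0}=(V_j^{\n})^{\le 0}=V_i \cap V_j$, so set $\phi_{ij}^{\le 0}=\id_{V_{ij}}$. Suppose inductively that an isomorphism
\begin{align*}
\phi_{ij}^{\le d} \colon (V_j^{\n}|_{V_{ij}})^{\le d} \stackrel{\cong}{\to}(V_i^{\n}|_{V_{ij}})^{\le d}
\end{align*}
has been built together with a compatible isomorphism $g_{ij}^{\le d}$ of the restricted universal representations. The key point is that the quotient $(\oO_{V_j^{\n}|_{V_{ij}}})^{\le d+1} \twoheadrightarrow (\oO_{V_j^{\n}|_{V_{ij}}})^{\le d}$ is a central extension in $\nN$, because its kernel is the subquotient $\gr_F^{d+1}$, which is annihilated by the maximal ideal and central by construction of the NC filtration. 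The pair $(\id_{V_{ij}},\, \vV_{U_j}^{\n}|_{V_{ij}}^{\le d+1},\, \id)$ defines an element of $h_{\gamma}|_{V_{ij}}((\oO_{V_j^{\n}|_{V_{ij}}})^{\le d+1})$ whose reduction mod $F^{d+1}$ comes from $\phi_{ij}^{\le d}$ via the NC hull property for $V_i^{\n}|_{V_{ij}}$. Applying the surjectivity in Proposition~\ref{prop:nchull} to this central extension produces the desired lift to a morphism $\phi_{ij}^{\le d+1}$ together with an isomorphism $g_{ij}^{\le d+1}$ extending $g_{ij}^{\le d}$. Taking the inverse limit over $d$ yields morphisms $\phi_{ij}$ and $g_{ij}$; by construction $\phi_{ij}^{ab}=\id$.

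To upgrade $\phi_{ij}$ to an isomorphism, I would run the same inductive argument with $i$ and $j$ swapped to obtain a morphism $\phi_{ji} \colon V_i^{\n}|_{V_{ij}} \to V_j^{\n}|_{V_{ij}}$ with $\phi_{ji}^{ab}=\id$. The composition $\phi_{ij}\circ \phi_{ji}$ is an endomorphism of $V_i^{\n}|_{V_{ij}}$ inducing the identity on the abelianization; by the same surjection property of the NC hull (applied to the trivial deformation), such an endomorphism lifts the identity on each NC order, hence is already an automorphism at each finite order, and the inverse limit is an automorphism of $V_i^{\n}|_{V_{ij}}$. Since we only need the data of Definition~\ref{def:QNC}, no cocycle condition is required, so we are done once the individual $\phi_{ij}$ are isomorphisms.

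The main obstacle is verifying that the maps $\oO_{V_j^{\n}|_{V_{ij}}}^{\le d+1}\twoheadrightarrow \oO_{V_j^{\n}|_{V_{ij}}}^{\le d}$ genuinely give central extensions in $\nN$ after restriction to the (non-affine, but covered by distinguished opens) intersection $V_{ij}$, so that Proposition~\ref{prop:nchull} applies sheaf-theoretically: on each standard affine $D(f)\subset V_{ij}$ the algebras are honest NC nilpotent algebras of finite type, and the central-extension and surjective-lifting properties glue because they are preserved by the Ore localizations discussed in Subsection~2.5. Once this local-to-global compatibility is established, the existence of $\phi_{ij}$ and $g_{ij}$, and the fact that the collection $\{V_i^{\n},\phi_{ij}\}$ defines a quasi NC structure in the sense of Definition~\ref{def:QNC}, follows mechanically from the inductive construction above.
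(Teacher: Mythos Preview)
Your proposal is correct and matches the approach the paper indicates. The paper does not prove this statement here but cites \cite[Corollary~3.12]{Todnc}, noting just before the theorem that it is deduced from Proposition~\ref{prop:nchull}; your inductive lifting of $\phi_{ij}^{\le d}$ and $g_{ij}^{\le d}$ along the central extensions $(\oO^{\n})^{\le d+1}\twoheadrightarrow(\oO^{\n})^{\le d}$ via the NC hull surjectivity, together with the uniqueness-of-hulls argument for invertibility, is exactly that route.
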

The constructions of this subsection and 
the previous subsection are summarized by the following diagram: 
\begin{align}\notag
\xymatrix{
\fbox{\mbox{Quasi NC structure} $\{V_i^{\n}\}_{i\in \mathbb{I}}$} \ar[rr]^-{\rm{abelization}} & &
\ovalbox{Classical moduli space $M_{\gamma}$} \\
\doublebox{? Quasi NCDG structure ?} \ar[u]^{\rm{truncation}} 
\ar[rr]^-{\rm{abelization}} & &
\ovalbox{\mbox{DG moduli space} $(N_{\gamma}, \oO_{N_{\gamma}, \bullet})$} 
\ar[u]^{\rm{truncation}} 
}
\end{align}
Below, we
are going to construct a quasi NCDG structure 
which fits into the above diagram.

\subsection{Constructions of non-commutative dg-algebras}\label{subsec:ndga}
Let $A$ be
a graded algebra (\ref{galg}), 
and $R$
an another associative (not necessary commutative)
algebra. 
Let $P$ be a graded free right $R$-module, and 
set $P^{\vee} \cneq \Hom_{R}(P, R)$
which 
is a graded free left $R$-module.
We set
\begin{align*}
\mathfrak{\mathfrak{P}} \cneq \bigoplus_{n\ge 2} \left(P^{\vee} \otimes 
\mathfrak{m}^{\otimes n} 
\otimes P\right)_{0}
\end{align*}
  which is a free graded $R$ bi-module. 
Here 
the grading of $\mathfrak{P}$ on 
$(P^{\vee} \otimes \mathfrak{m}^{\otimes n}
 \otimes P)_{0}$
is set to be $1-n$. 
We define the graded algebra $\mathfrak{A}$ to be
the tensor algebra of $\mathfrak{P}$ over $R$
\begin{align*}
\mathfrak{A} \cneq 
\bigoplus_{m\ge 0}
\mathfrak{P}^{\otimes_{R} m}, \ 
\mathfrak{P}^{\otimes_{R} m} \cneq 
 \overset{m}{\overbrace{\mathfrak{P} \otimes_{R} \mathfrak{P} \otimes_{R}
 \cdots \otimes_{R} \mathfrak{P}}}. 
\end{align*}
The grading on $\mathfrak{A}$ is induced by that of $\mathfrak{P}$, 
and 
the degree zero part of $\mathfrak{A}$ is 
$\mathfrak{P}^{\otimes_{R} 0} \cneq R$. 
The algebra structure on $\mathfrak{A}$ is 
given by 
\begin{align*}
(b_1 \otimes \cdots \otimes b_m) 
\cdot (b_{m+1} \otimes \cdots \otimes b_n)
=b_1 \otimes \cdots \otimes 
b_{m} \otimes b_{m+1} \otimes \cdots \otimes b_n. 
\end{align*}
Let
\begin{align}\label{egrade}
\widehat{e} \colon {\mathfrak{m}} \to \End_{R}(P)
\end{align}
be a grade preserving linear map. 
For $a \in \mathfrak{m}$, 
$x \in P$ and 
$f \in P^{\vee}$, we set
\begin{align}\label{convent}
ax \cneq \widehat{e}(a)(x) \in P, \ 
fa \cneq f \circ \widehat{e}(a) \in P^{\vee}. 
\end{align}
We also set the linear map
\begin{align*}
\widehat{\mu} \colon \mathfrak{m}^{\otimes 2} \to \End_{R}(P)
\end{align*}
as follows: 
\begin{align*}
\widehat{\mu}(a_1 \otimes a_2)=\widehat{e}(a_1 a_2)-\widehat{e}(a_1) 
\circ \widehat{e}(a_2). 
\end{align*}
We define the degree one ${R}$ bi-module map 
\begin{align}\label{map:Q}
Q=Q_0+Q_1+Q_2 \colon 
\mathfrak{P} \to {R} \oplus \mathfrak{P} \oplus (\mathfrak{P}\otimes_{R} \mathfrak{P})
\end{align}
in the following way.
The map $Q_0$ is defined by
\begin{align}
\label{def:q0}
Q_0 \colon \left(P^{\vee} \otimes \mathfrak{m}^{\otimes 2} \otimes P \right)_0
 &\to {R} \\
\notag
f \otimes a_1 \otimes a_2 \otimes x &\mapsto f \circ 
\widehat{\mu}(a_1, a_2)(x). 
\end{align}
The map $Q_1$ is defined by
\begin{align*}
Q_1 \colon \left(P^{\vee} \otimes \mathfrak{m}^{\otimes n} \otimes P\right)_{0}
& \to 
\left(P^{\vee} \otimes \mathfrak{m}^{\otimes n-1} \otimes P\right)_{0} \\
f \otimes a_1 \otimes \cdots \otimes a_n \otimes x
 &\mapsto 
(-1)^{n+1}fa_1 \otimes a_2 \otimes \cdots \otimes a_{n} \otimes x \\
&\quad +\sum_{j=1}^{n-1} (-1)^{n+1-j} f \otimes a_1
 \otimes \cdots \otimes a_j a_{j+1}
\otimes \cdots \otimes a_{n} \otimes x \\
&\hspace{40mm} -f \otimes a_1 \otimes \cdots \otimes a_{n-1} \otimes a_n x. 
\end{align*}
Here we have used the convention in (\ref{convent}).  
Finally the map $Q_2$ is defined by
\begin{align*}
Q_2 \colon \left(P^{\vee} \otimes 
\mathfrak{m}^{\otimes n} \otimes P\right)_{0} &\to 
\bigoplus_{k=2}^{n-2}
\left(P^{\vee} \otimes \mathfrak{m}^{\otimes k} \otimes P \right)_0
 \otimes_{R} \left( P^{\vee}
\otimes \mathfrak{m}^{\otimes n-k} \otimes P \right)_0
\\
f \otimes a_1 \otimes \cdots \otimes a_n \otimes x
 &\mapsto \sum_{k=2}^{n-2} (-1)^{n(k-2)+1} f
\otimes a_1 \otimes \cdots \otimes a_k \otimes \widehat{\id}_P \\
&\hspace{40mm} \otimes a_{k+1} \otimes \cdots \otimes a_{n} \otimes x.
\end{align*}
Here
$\widehat{\id}_P$
is defined as follows: 
we decompose $\id_P \in \Hom_{R}(P, P)=P \otimes_{R} P^{\vee}$
as 
\begin{align*}
\id_P=\sum_{i} u_i \otimes _{R} v_i
\end{align*}
for  
homogeneous elements $u_i \in P$, $v_i \in P^{\vee}$ with 
$\lvert u_i \rvert + \lvert v_i \rvert =0$, and 
\begin{align*}
\widehat{\id}_P \cneq
\sum_{\lvert f \rvert + \lvert a_1 \rvert+\cdots +
\lvert a_k \rvert+ \lvert u_i \rvert=0}
u_i \otimes_{R} v_i. 
\end{align*} 
By the Leibniz rule, the map (\ref{map:Q}) extends to the 
degree one ${R}$ bi-module map
\begin{align}\label{map:QB}
Q \colon \mathfrak{A} \to \mathfrak{A}. 
\end{align}
We have the following proposition: 
\begin{prop}\label{prop:Q}
The map $Q$ in (\ref{map:QB}) 
satisfies $Q^2=0$. 
Hence $(\mathfrak{A}, Q)$ is a non-commutative 
differential graded algebra. 
\end{prop}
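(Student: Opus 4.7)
The plan is to reduce the statement to a verification on the generators $\mathfrak{P}$ of the tensor $R$-algebra $\mathfrak{A}=T_R(\mathfrak{P})$, and then to organize the residual computation as a (curved) bar-cobar cancellation. Since $Q_0+Q_1+Q_2$ is defined as an $R$-bilinear map on the free generating $R$-bimodule $\mathfrak{P}$, there is a unique extension of it to a degree-one $R$-linear super-derivation $Q$ of $\mathfrak{A}$ (implicitly, $Q$ is $0$ on $R\subset\mathfrak{A}$). Its square is then an $R$-linear super-derivation of degree two, so $Q^2=0$ on $\mathfrak{A}$ as soon as it is verified on $\mathfrak{P}$.

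Fix $\omega=f\otimes a_1\otimes\cdots\otimes a_n\otimes x\in(P^\vee\otimes\mathfrak{m}^{\otimes n}\otimes P)_0$ with $n\geq 2$. For $n=2$ one has $Q(\omega)=Q_0(\omega)\in R$, whence $Q^2(\omega)=0$ trivially. For $n=3$ only $Q_0\circ Q_1$ contributes (since $Q_1$ lands in $\mathfrak{P}_2$ and $Q_2$ is supported on $\mathfrak{P}_{\geq 4}$), and the four $R$-valued terms of $Q_0Q_1(\omega)$ telescope to zero once $\widehat\mu(a,b)=\widehat e(ab)-\widehat e(a)\widehat e(b)$ is expanded; this is a short direct computation. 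For $n\geq 4$ all three components contribute and I would expand
\begin{align*}
Q^2(\omega)=\sum_{i,j\in\{0,1,2\}}(Q_i\circ Q_j)(\omega),
\end{align*}
noting that $Q_iQ_0=0$ since $Q_0$ lands in $R$. The remaining contributions then cancel in the following groups.

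First, within $Q_1\circ Q_1$ there are three types of summands: (a) non-adjacent double merges $\cdots a_ia_{i+1}\cdots a_ja_{j+1}\cdots$ with $i+1<j$, which cancel in pairs via the sign identity $(-1)^{(n+1-i)+(n+1-j)}+(-1)^{(n+1-j)+(n-i)}=0$; (b) adjacent triple merges $a_ia_{i+1}a_{i+2}$, which cancel by the associativity of the product in $A$; and (c) the two ``anomalous'' boundary residues $\pm f\bigl(\widehat e(a_1a_2)-\widehat e(a_1)\widehat e(a_2)\bigr)\otimes a_3\otimes\cdots\otimes x=\pm f\widehat\mu(a_1,a_2)\otimes a_3\otimes\cdots\otimes x$ and its symmetric right-hand analogue involving $\widehat\mu(a_{n-1},a_n)$. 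These last residues are precisely matched by the $k=2$ and $k=n-2$ summands of $Q_0\circ Q_2$ through the identity $\sum_i f\widehat\mu(a_1,a_2)(u_i)\cdot v_i=f\circ\widehat\mu(a_1,a_2)$ arising from $\sum_i u_i\otimes_R v_i=\mathrm{id}_P$; the remaining $Q_0Q_2$-contributions with $3\leq k\leq n-3$ vanish automatically because $Q_0$ is supported on $\mathfrak{P}_2$. The mixed terms $Q_1\circ Q_2+Q_2\circ Q_1$ pair as ``merge-then-split'' and ``split-then-merge'' configurations and cancel after comparing the sign prefactors $(-1)^{n(k-2)+1}$ and $(-1)^{(n-1)(k-2)+1}$, with collisions between a boundary action and the inserted $\widehat{\mathrm{id}}_P$ absorbed by the centrality identity $\sum_i\widehat e(a)(u_i)\otimes_R v_i=\sum_i u_i\otimes_R v_i\widehat e(a)$. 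Finally, $Q_2\circ Q_2$ inserts $\widehat{\mathrm{id}}_P$ at two positions $k_1<k_2$; the two orderings of insertions produce opposite signs via the parity identity $n(k_1-2)+(n-k_1)(k_2-k_1-2)\equiv n(k_2-2)+k_2(k_1-2)+1\pmod{2}$ and therefore cancel.

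The main obstacle is sign bookkeeping, where the internal grading on $P$ and $\mathfrak{m}$, the bar-shift $|\mathfrak{P}_n|=1-n$, the super Koszul signs that appear when moving $a_i$'s past $v_j$'s, and the explicit sign prefactors in the definitions of $Q_0$, $Q_1$ and $Q_2$ all interact. Conceptually, $(\mathfrak{A},Q)$ is the $R$-linear curved cobar construction associated with the ``coalgebra'' $P^\vee\otimes\bigl(\bigoplus_{n\geq 2}\mathfrak{m}^{\otimes n}\bigr)\otimes P$ twisted by the curvature $\widehat\mu$ measuring the failure of $\widehat e$ to be an $A$-representation, and the identity $Q^2=0$ is the standard statement that such a curved cobar differential squares to zero.
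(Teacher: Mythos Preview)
Your proposal is correct and is precisely the ``straightforward check'' that the paper declines to carry out: the paper's entire proof reads ``It is straightforward to check $Q^2=0$, and we leave the details to the readers.'' Your reduction to generators via the derivation property, followed by the case analysis $n=2,3,\ge 4$ and the grouping of the $Q_iQ_j$ contributions into bar-type cancellations (non-adjacent merges pairing by sign, adjacent merges by associativity of $A$, boundary $\widehat\mu$-residues matched against the $k=2$ and $k=n-2$ pieces of $Q_0$ applied via Leibniz to $Q_2(\omega)$, and the double-split antisymmetry for $Q_2Q_2$), is exactly the computation the author has in mind; you have simply supplied the details. The conceptual remark that $(\mathfrak{A},Q)$ is a curved cobar construction twisted by $\widehat\mu$ is an apt summary and explains why the cancellation pattern is the expected one.
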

\begin{proof}
It 
is straightforward to check $Q^2=0$, 
and we leave the details to the readers. 
\end{proof}
The first few terms of the 
complex $(\mathfrak{A}, Q)$ is
\begin{align*}
\cdots \to (P^{\vee} \otimes
\mathfrak{m}^{\otimes 3} \otimes P)_{0} 
&\oplus \left(P^{\vee} \otimes
\mathfrak{m}^{\otimes 2} 
\otimes P \right)^{\otimes_{R} 2}_0 \\
&\to (P^{\vee} \otimes
\mathfrak{m}^{\otimes 2} \otimes P)_{0}
\stackrel{Q_0}{\to} {R} \to 0. 
\end{align*}
In particular, we have
\begin{align}\label{isom:h0}
\hH_0(\mathfrak{A}, Q)={R}/J
\end{align}
where $J$ is the two sided ideal 
given by the image of $Q_0$ in (\ref{def:q0}).

\subsection{Abelization of $\mathfrak{A}$}
We describe the abelization of the 
non-commutative dg-algebra $\mathfrak{A}$. 
We set
\begin{align*}
\mathfrak{P}^{ab} &\cneq \bigoplus_{n\ge 2} \left(\mathfrak{m}^{\otimes n} \otimes
\End_{{R}^{ab}}(P^{ab}) \right)_0 \\
& =\bigoplus_{n\ge 2} 
\Hom_{\rm{gr}}(\mathfrak{m}^{\otimes n}, \End_{{R}^{ab}}(P^{ab}))^{\vee}. 
\end{align*}
which is a graded free ${R}^{ab}$-module. 
The grading on $(\mathfrak{m}^{\otimes n} \otimes \End_{{R}^{ab}}(P^{ab}))_0$
is $1-n$, and $\ast^{\vee}$ is the dual of $\ast$ over ${R}^{ab}$. 
\begin{lem}\label{lem:abel}
As a graded algebra, we have 
\begin{align}\label{Bab}
\mathfrak{A}^{ab}=S_{{R}^{ab}}(\mathfrak{P}^{ab}). 
\end{align}
\end{lem}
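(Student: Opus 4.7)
The plan is to prove the lemma by invoking the universal property of the tensor algebra $T_R(-)$ and of the super symmetric algebra $S_{R^{ab}}(-)$, with the main content being an identification of the abelization of the $R$-bimodule $\mathfrak{P}$ with the $R^{ab}$-module $\mathfrak{P}^{ab}$ stated in the lemma.

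First I would record that $\mathfrak{A} = T_R(\mathfrak{P}) = \bigoplus_{m \geq 0} \mathfrak{P}^{\otimes_R m}$ is the universal $R$-algebra equipped with an $R$-bimodule map from $\mathfrak{P}$, while the super abelization $\mathfrak{A}^{ab}$ is its universal super graded commutative quotient. Combining these two universal properties, any $R$-bimodule map $\mathfrak{P} \to B$ into a super graded commutative $\mathbb{C}$-algebra $B$ must satisfy: (i) it factors through $R \to R^{ab}$ since $[R,R] \subset \mathfrak{A}$ vanishes in $B$, and (ii) it identifies the left and right $R$-actions on $\mathfrak{P}$, because elements of $R$ lie in degree $0$ and $[r,x] = rx - xr = 0$ in $B$. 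Hence such maps descend to $R^{ab}$-module maps from the quotient
\[
\overline{\mathfrak{P}} \cneq \mathfrak{P}\big/\langle rx - xr : r\in R,\ x\in \mathfrak{P}\rangle \otimes_R R^{ab},
\]
and conversely every $R^{ab}$-module map from $\overline{\mathfrak{P}}$ to a super graded commutative $R^{ab}$-algebra extends uniquely.

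The key computation is then to identify $\overline{\mathfrak{P}}$ with $\mathfrak{P}^{ab}$ as graded $R^{ab}$-modules. Writing $P = W \otimes_{\mathbb{C}} R$ for a graded vector space $W$ (using that $P$ is graded free of finite rank over $R$), one has $P^{\vee} = R \otimes_{\mathbb{C}} W^{\ast}$ as a left $R$-module, so that
\[
(P^{\vee} \otimes \mathfrak{m}^{\otimes n} \otimes P)_0 = (R \otimes W^{\ast} \otimes \mathfrak{m}^{\otimes n} \otimes W \otimes R)_0
\]
as an $R$-bimodule with the left action on the first $R$-factor and the right action on the last. Imposing the abelization relations allows one to commute the leftmost $R$-factor past the intermediate tensors and merge it with the rightmost one in $R^{ab}$, producing the explicit map
\[
s \otimes w^{\ast} \otimes a_1 \otimes \cdots \otimes a_n \otimes w \otimes s' \mapsto a_1 \otimes \cdots \otimes a_n \otimes (w^{\ast} \otimes w) \otimes \overline{ss'}
\]
onto $(\mathfrak{m}^{\otimes n} \otimes \End_{R^{ab}}(P^{ab}))_0$. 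Summing over $n \geq 2$ identifies $\overline{\mathfrak{P}} \cong \mathfrak{P}^{ab}$.

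Finally, combining the universal property of $\mathfrak{A}^{ab}$ (derived in the first step) with the universal property of $S_{R^{ab}}(\mathfrak{P}^{ab})$ as the free super graded commutative $R^{ab}$-algebra on $\mathfrak{P}^{ab}$, Yoneda's lemma produces the canonical isomorphism $S_{R^{ab}}(\mathfrak{P}^{ab}) \stackrel{\cong}{\to} \mathfrak{A}^{ab}$ of graded algebras, induced by the inclusion $\mathfrak{P}^{ab} \hookrightarrow \mathfrak{A}^{ab}$. The main obstacle is bookkeeping: keeping track of the left versus right $R$-module structures on $P^{\vee}$ and $P$ and verifying that the map produced in the second step is actually an \emph{iso}morphism (rather than just a surjection), which relies crucially on the freeness of $P$ over $R$.
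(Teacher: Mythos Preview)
Your proposal is correct and follows essentially the same route as the paper. The paper's proof is more compressed: it writes $P = W \otimes R$, sets $\overline{W} = \bigoplus_{n\ge 2}(W^{\vee}\otimes\mathfrak{m}^{\otimes n}\otimes W)_0$, observes directly that $\mathfrak{A} = R \ast T(\overline{W})$ and $\mathfrak{P}^{ab} = R^{ab}\otimes\overline{W}$, and then invokes the already-established identity $(R\ast T(\overline{W}))^{ab} = R^{ab}\otimes S(\overline{W})$ from~(\ref{abeliz}); your universal-property framing unpacks exactly this computation, with your identification $\overline{\mathfrak{P}}\cong\mathfrak{P}^{ab}$ being the step $\mathfrak{P}^{ab}=R^{ab}\otimes\overline{W}$ in the paper.
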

\begin{proof}
We write $P$ as 
$P=W \otimes{R}$
for a graded vector space $W$, and set
\begin{align*}
\overline{W} \cneq 
\bigoplus_{n\ge 2} \left(W^{\vee} \otimes \mathfrak{m}^{\otimes n} 
\otimes W\right)_0. 
\end{align*}
Then we have 
$\mathfrak{P}={R} \otimes \overline{W} \otimes {R}$, and 
\begin{align*}
\mathfrak{A}={R} \ast T(\overline{W}). 
\end{align*}
On the other hand, we have 
$\mathfrak{P}^{ab}={R}^{ab} \otimes \overline{W}$, hence
\begin{align*}
S_{{R}^{ab}}(\mathfrak{P}^{ab})={R}^{ab} \otimes
S(\overline{W}). 
\end{align*}
Therefore we obtain (\ref{Bab}). 
\end{proof}
By the Leibniz rule, the derivation (\ref{map:QB})
induces a degree one ${R}^{ab}$-linear derivation
\begin{align*}
q \cneq Q^{ab} \colon \mathfrak{A}^{ab} \to \mathfrak{A}^{ab}. 
\end{align*}
From the description of $Q$, 
it is easy to describe $q$ 
under the identity (\ref{Bab}). 
By Lemma~\ref{lem:abel}, the 
map $q$ is determined by
its restriction to $\mathfrak{P}^{ab}$
\begin{align*}
q=
q_0+q_1+q_2 \colon 
\mathfrak{P}^{ab} \to {R}^{ab} \oplus \mathfrak{P}^{ab} \oplus S^2_{{R}^{ab}}(\mathfrak{P}^{ab}). 
\end{align*}
Let $e$, $\mu$ be the compositions of $\widehat{e}$, $\widehat{\mu}$,
with
the natural map $\End_{R}(P) \to \End_{{R}^{ab}}(P^{ab})$:  
\begin{align}\label{data:eu}
e \colon \mathfrak{m} \to \End_{{R}^{ab}}(P^{ab}), \ 
\mu \colon \mathfrak{m}^{\otimes 2} \to \End_{{R}^{ab}}(P^{ab}). 
\end{align}
The map $q_0$ is described as 
\begin{align*}
q_0 \colon \left(\mathfrak{m}^{\otimes 2} \otimes \End_{{R}^{ab}}(P^{ab})\right)_0
&\to {R}^{ab} \\
a_1 \otimes a_2 \otimes g &\mapsto
\tr(g \circ \mu(a_1, a_2)).
\end{align*}
The map $q_1$ is described as 
\begin{align*}
q_1 \colon \left(\mathfrak{m}^{\otimes n} \otimes \End_{{R}^{ab}}(P^{ab}) 
\right)_{0}
&\to \left(\mathfrak{m}^{\otimes n-1} \otimes \End_{{R}^{ab}}(P^{ab})
\right)_{0} \\
a_1 \otimes \cdots \otimes a_n \otimes g  &\mapsto 
(-1)^{n+1}a_2 \otimes \cdots \otimes a_n \otimes (g \circ e(a_1)) \\
&\quad
+\sum_{j=1}^{n-1} (-1)^{n+1-j} a_1 \otimes \cdots \otimes a_j a_{j+1} \otimes
\cdots \otimes a_n \otimes g  \\
&\hspace{30mm}
-a_1 \otimes \cdots \otimes a_{n-1} \otimes (e(a_n) \circ g). 
\end{align*}
The map $q_2$ is described as 
\begin{align*}
&q_2 \colon 
\left(\mathfrak{m}^{\otimes n} \otimes \End_{{R}^{ab}}(P^{ab})\right)_0 \\
&\hspace{20mm} \to 
\bigoplus_{k=2}^{n-2}
\left(\mathfrak{m}^{\otimes k} \otimes 
\End_{{R}^{ab}}(P^{ab}) \right)_0 \otimes_{{R}^{ab}}
\left( \End_{{R}^{ab}}(P^{ab}) \otimes 
\mathfrak{m}^{\otimes n-k} \right)_0 \\
&a_1 \otimes \cdots \otimes a_n \otimes g \\
& \hspace{20mm} \mapsto 
\sum_{k=2}^{n-2}
(-1)^{n(k-2)+1}
a_1 \otimes \cdots \otimes a_k \otimes 
\widehat{\circ}^{\vee} g \otimes 
a_{k+1} \otimes \cdots \otimes a_n. 
\end{align*}
Here 
$\circ^{\vee}$ is the dual of 
the composition map
\begin{align*}
\circ^{\vee} \colon 
\End_{{R}^{ab}}(P^{ab}) \to \End_{{R}^{ab}}(P^{ab}) \otimes_{{R}^{ab}}
\End_{{R}^{ab}}(P^{ab})
\end{align*}
and writing $\circ^{\vee} g$ as 
the sum of $u_i \otimes_{{R}^{ab}} v_i$
for homogeneous elements 
$u_i, v_i \in \End_{{R}^{ab}}(P^{ab})$, 
we set
\begin{align*}
\widehat{\circ}^{\vee} g
=\sum_{\lvert a_1 \rvert + \cdots + \lvert a_k \rvert + \lvert u_i \rvert=0}
u_i \otimes_{{R}^{ab}} v_i. 
\end{align*}
On the other hand, note that
\begin{align*}
\widetilde{P}^{ab} \to \Spec {R}^{ab}
\end{align*}
is a graded vector bundle
on $\Spec R^{ab}$.  
The data of $e$ in (\ref{data:eu})
together with
the construction of (\ref{construct:dg})
yield the affine commutative dg-scheme
\begin{align}\label{dg:affine}
\left(\Spec R^{ab}, S_{\widetilde{R}^{ab}}\left( \bigoplus_{n\ge 2}
\Hom_{\rm{gr}}
(\mathfrak{m}^{\otimes n}, \eE nd_{\widetilde{R}^{ab}}
(\widetilde{P}^{ab}))[1]^{\vee}  \right) \right). 
\end{align} 
By Lemma~\ref{lem:abel}
together with the above description of $q=Q^{ab}$, 
the global section 
of the dg-structure sheaf of (\ref{dg:affine})
coincides with 
$\mathfrak{A}^{ab}$ as a dg-algebra.

\subsection{Quasi NCDG structures on $N_{\gamma}$}
Now we return to the situation of Subsection~\ref{subsec:Quasi}. 
As in Subsection~\ref{subsec:Quasi}, we 
take an
affine open subset 
$U \subset N_{\gamma}$
such that each $\vV_k|_{U}$
is trivial $\vV_k|_{U}=\oO_{U} \otimes W_k$. 
We take an NC smooth thickening $U^{\n}$ of $U$, 
and a lift 
$\vV_{U, \bullet}^{\n}$ of 
$\vV|_{U}$
to a flat representation of $Q_{[p, q]}$ 
over $U^{\n}$, as in (\ref{lift}). 
We apply the construction
in Subsection~\ref{subsec:ndga}
by setting
\begin{align*}
R=\Gamma(\oO_U^{\n}), \ 
P=\Gamma(\vV_{U, \bullet}^{\n})^{\vee}
\end{align*} 
where $\ast^{\vee}$ is the dual of $\ast$
over $R$. 
Note that $P$ is a graded free right $R$-module. 
Using (\ref{hate}) instead of (\ref{egrade}), 
the construction in Proposition~\ref{prop:Q}
yields the non-commutative dg-algebra 
structure on 
\begin{align}\label{dga:lambda}
\Lambda_{U, \bullet}^{\n} &\cneq 
\bigoplus_{m\ge 0} 
\left(\bigoplus_{n\ge 2}
\Gamma(\vV_{U, \bullet}^{\n}) \otimes
\mathfrak{m}^{\otimes n} 
\otimes\Gamma(\vV_{U, \bullet}^{\n})^{\vee}
  \right)^{\otimes_{\oO_U^{\n}}m}_0.
\end{align}
By the proof of Lemma~\ref{lem:abel},
we have 
\begin{align}\label{U:ncdg}
\Lambda_{U, \bullet}^{\n}=
\Gamma(\oO_U^{\n}) \ast T(\overline{W})
\end{align}
where 
$\overline{W}$ is the finite dimensional graded
vector space given by
\begin{align*}
\overline{W}=\bigoplus_{n\ge 2}
\left(\bigoplus_{p\le j, k \le q}W_j \otimes \mathfrak{m}^{\otimes n}
\otimes W_k^{\vee}   \right)_0.
\end{align*}
Here $W_j$ is a vector space with dimension $\gamma_j$, 
located in degree $j$. 
We define the following
affine NCDG scheme 
\begin{align}\label{SpfRUn}
(U, \oO_{U, \bullet}^{\n}) \cneq \Spf \Lambda_{U, \bullet}^{\n}. 
\end{align}
Note that (\ref{SpfRUn}) is smooth 
as $U^{\n}$ is a smooth NC thickening of $U$. 
By the identity (\ref{isom:h0}), we have 
\begin{align}\label{id:V}
\tau_0(U, \oO_{U, \bullet}^{\n})=
(V, \oO_V^{\n})
\end{align}
where $\oO_V^{\n}$ 
is given in (\ref{nc:V}), 
as it is given by the NC completion 
of the cokernel of (\ref{map:mu}). 
Also the argument in the previous subsection 
shows that
\begin{align}\label{Uab}
(\oO_{U, \bullet}^{\n})^{ab} = 
\oO_{N_{\gamma}, \bullet}|_{U}
\end{align}
where $\oO_{N_{\gamma}, \bullet}$ is the 
sheaf of commutative dg-algebras on $N_{\gamma}$
given in (\ref{com:dga}). 
Hence (\ref{SpfRUn}) is 
an affine NCDG structure on $(U, \oO_{N_{\gamma}, \bullet}|_{U})$. 

Let $\{U_i\}_{i\in \mathbb{I}}$ be an affine open 
cover of $N_{\gamma}$, 
such that each $\vV_k|_{U_i}$ is trivial. 
Applying the above construction, we obtain 
affine NCDG schemes
\begin{align*}
(U_i, \oO_{U_i, \bullet}^{\n}), \ i \in \mathbb{I}. 
\end{align*}
On the other hand, we have isomorphisms 
of NC schemes and $Q_{[p, q]}$-representations
over $U_j^{\n}|_{U_{ij}}$
(cf.~\cite[Corollary~3.12]{Todnc})
\begin{align}\label{isom:rep}
\phi_{ij} \colon U_j^{\n}|_{U_{ij}} \stackrel{\cong}{\to}
U_i^{\n}|_{U_{ij}}, \ 
g_{ij} \colon \phi_{ij}^{\ast}\vV_{U_i}^{\n}|_{U_{ij}} \stackrel{\cong}{\to}
\vV_{U_j}^{\n}|_{U_{ij}}
\end{align}
such that $\phi_{ij}^{ab}=\id$
and 
$g_{ij}^{ab}$ 
is the 
gluing isomorphism 
of the universal object $\vV$
in (\ref{univ:rep}). 
Since the dg-algebra (\ref{dga:lambda})
is determined by the 
algebra $\Gamma(\oO_U^{\n})$ together 
with the $Q_{[p, q]}$-representation
$\vV_{U}^{\n}$ over $U^{\n}$, 
the isomorphisms (\ref{isom:rep})
 induce the isomorphisms
of NCDG schemes
\begin{align*}
\phi_{ij, \bullet} \colon 
(U_{ij}, \oO_{U_j, \bullet}^{\n}|_{U_{ij}}) \stackrel{\cong}{\to}
(U_{ij}, \oO_{U_i, \bullet}^{\n}|_{U_{ij}})
\end{align*}
giving a quasi NCDG structure on $(N_{\gamma}, \oO_{N_{\gamma}, \bullet})$. 
Also under (\ref{id:V}), 
the isomorphisms $\hH_0(\phi_{ij, \bullet})$
give a quasi NC structure on $M_{\gamma}$
considered in Theorem~\ref{thm:qnc}. 
As a summary, we have obtained the following: 
\begin{thm}\label{thm:ncdg}
There exists a smooth quasi NCDG 
structure on the smooth 
commutative dg-moduli space 
$(N_{\gamma}, \oO_{N_{\gamma}, \bullet})$
whose 
zero-th 
truncation 
gives a quasi NC structure on $M_{\gamma}$
in Theorem~\ref{thm:qnc}. 
\end{thm}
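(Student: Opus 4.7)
The plan is to assemble the quasi NCDG structure from the local data constructed in the preceding subsections. First, I would fix an affine open cover $\{U_i\}_{i\in \mathbb{I}}$ of $N_{\gamma}$ such that each restriction $\vV_k|_{U_i}$ is trivial; this exists because the $\vV_k$ are vector bundles on the quasi-projective scheme $N_{\gamma}$. On each $U_i$, I invoke Kapranov's existence theorem~\cite[Theorem~1.6.1]{Kap17} to obtain an NC smooth thickening $U_i^{\n}=(U_i,\oO_{U_i}^{\n})$, and then lift each $\phi_a|_{U_i}$ to a left $\oO_{U_i}^{\n}$-module homomorphism $\phi_a^{\n}$, which is possible since $\oO_{U_i}^{\n}\twoheadrightarrow \oO_{U_i}$ is surjective. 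This produces the flat family $\vV_{U_i,\bullet}^{\n}$ of $(Q_{[p,q]},I)$-representations over $U_i^{\n}$ as in (\ref{lift}).

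Next, I apply the non-commutative dg-algebra construction of Subsection~\ref{subsec:ndga} with $R=\Gamma(\oO_{U_i}^{\n})$, $P=\Gamma(\vV_{U_i,\bullet}^{\n})^{\vee}$, and $\widehat{e}$ induced by the lifted maps $\phi_a^{\n}$. Proposition~\ref{prop:Q} gives the differential $Q$ with $Q^2=0$ on $\Lambda_{U_i,\bullet}^{\n}$, and hence an affine NCDG scheme $(U_i,\oO_{U_i,\bullet}^{\n})\cneq \Spf \Lambda_{U_i,\bullet}^{\n}$. Three properties require verification: (i) smoothness follows because, by (\ref{U:ncdg}), $\Lambda_{U_i,\bullet}^{\n}=\Gamma(\oO_{U_i}^{\n})\ast T(\overline{W})$ with $\oO_{U_i}^{\n}$ NC smooth; (ii) the identity $(\oO_{U_i,\bullet}^{\n})^{ab}=\oO_{N_{\gamma},\bullet}|_{U_i}$ is the content of the abelization discussion, obtained by matching the explicit formulas for $q_0,q_1,q_2$ with the curved DGLA underlying the construction (\ref{construct:dg}); (iii) the zero-th truncation equals $V_i^{\n}$ of Theorem~\ref{thm:qnc}, which follows from $\hH_0(\mathfrak{A},Q)=R/J$ of (\ref{isom:h0}) together with the observation that the image of $Q_0$ is exactly the ideal $\jJ_{U_i,I}$ appearing in (\ref{nc:V}).

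For the gluing, the key point is that the dg-algebra $\Lambda_{U,\bullet}^{\n}$ depends functorially on the pair $(\oO_U^{\n},\vV_{U,\bullet}^{\n})$: the underlying graded pieces are built from tensor operations on $P$, $P^{\vee}$, and $\mathfrak{m}$, and the differential $Q=Q_0+Q_1+Q_2$ is defined entirely in terms of the structure maps of the representation. Consequently, the pairs of isomorphisms $(\phi_{ij},g_{ij})$ provided by~\cite[Corollary~3.12]{Todnc} (restated in Theorem~\ref{thm:qnc}) induce isomorphisms $\phi_{ij,\bullet}\colon (U_{ij},\oO_{U_j,\bullet}^{\n}|_{U_{ij}})\stackrel{\cong}{\to}(U_{ij},\oO_{U_i,\bullet}^{\n}|_{U_{ij}})$ of NCDG schemes. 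The identity $\phi_{ij,\bullet}^{ab}=\id$ follows from $\phi_{ij}^{ab}=\id$ and the fact that $g_{ij}^{ab}$ is the gluing datum of the universal representation $\vV$ on $N_{\gamma}$, so passing to abelization recovers the identity on $\oO_{N_{\gamma},\bullet}|_{U_{ij}}$. Applying $\hH_0$ gives back the quasi NC structure of Theorem~\ref{thm:qnc}.

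The main obstacle I anticipate is the functoriality claim used in the gluing step: one has to check that $(\phi_{ij},g_{ij})$ really does intertwine the differentials $Q$ on the two sides, and not merely the underlying graded algebras. This reduces to checking, term by term in the definitions of $Q_0$, $Q_1$, $Q_2$, that $g_{ij}$ intertwines the lifted representation maps $\phi_a^{\n}$ and their compositions; the Leibniz extension then handles the full differential. Once this compatibility is verified, everything else is bookkeeping: assembling the collection $\{(U_i,\oO_{U_i,\bullet}^{\n}),\phi_{ij,\bullet}\}$ yields the required smooth quasi NCDG structure on $(N_{\gamma},\oO_{N_{\gamma},\bullet})$, and the zero-th truncation recovers the quasi NC structure on $M_{\gamma}$ by property (iii) above.
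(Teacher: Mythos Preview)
Your proposal is correct and follows essentially the same route as the paper: construct the local affine NCDG schemes $(U_i,\oO_{U_i,\bullet}^{\n})=\Spf\Lambda_{U_i,\bullet}^{\n}$ via Proposition~\ref{prop:Q}, verify smoothness, abelization, and the zero-th truncation exactly as you describe, and then glue using the functorial dependence of $\Lambda_{U,\bullet}^{\n}$ on the pair $(\oO_U^{\n},\vV_U^{\n})$ together with the isomorphisms $(\phi_{ij},g_{ij})$ from~\cite[Corollary~3.12]{Todnc}. One small imprecision: the gluing input you need is the pair $(\phi_{ij},g_{ij})$ at the level of the smooth NC thickenings $U_i^{\n}|_{U_{ij}}$ and the $Q_{[p,q]}$-representations $\vV_{U_i}^{\n}|_{U_{ij}}$ (as in the paper's equation~(\ref{isom:rep})), not merely the restricted data on $V_{ij}$ recorded in Theorem~\ref{thm:qnc}; both come from the same external result, but only the former determines the full dg-algebra $\Lambda_{U_i,\bullet}^{\n}$ on $U_{ij}$.
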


\section{Quasi NCDG structures on the moduli spaces of stable sheaves}
\label{sec:quasi}
In this section, we 
show that quasi NC structures on the moduli spaces of 
stable sheaves
on projective schemes constructed in~\cite{Todnc}
are 
obtained as the zero-th truncations of 
smooth quasi NCDG structures on 
smooth commutative dg-moduli spaces of stable sheaves. 
Throughout this section, 
we assume that 
$(X, \oO_X(1))$ is a connected polarized
projective scheme over $\mathbb{C}$. 
\subsection{Moduli spaces of stable sheaves}
For $F \in \Coh(X)$,
let $\alpha(F, t)$ be its Hilbert polynomial
\begin{align*}
\alpha(F, t) \cneq \chi(F \otimes \oO_X(t))
\end{align*}
and $\overline{\alpha}(F, t) \cneq \alpha(F, t)/c$ 
its reduced Hilbert polynomial, where 
$c$ is the leading coefficient of $\alpha(F, t)$.  
Recall the (semi)stability on $X$:
\begin{defi}
A
coherent sheaf $F$ on $X$
is called (semi)stable if 
it is a pure sheaf, 
and for any 
subsheaf $0 \subsetneq F' \subsetneq F$, we have 
\begin{align}\label{def:stab}
\overline{\alpha}(F', k) <(\le) \overline{\alpha}(F, k), \ 
k\gg 0.   
\end{align}
\end{defi}
Let us take a polynomial $\alpha \in \mathbb{Q}[t]$, 
which is a Hilbert polynomial of some coherent sheaf on $X$.  
Let 
\begin{align}\label{mfunct}
\mM_{\alpha} \colon 
\sS ch/\mathbb{C} \to \sS et
\end{align}
be the functor 
defined by 
\begin{align*}
\mM_{\alpha}(T) \cneq \left\{ \fF \in \Coh(X \times T) :
\begin{array}{c}
\fF \mbox{ is } T \mbox{-flat, } 
\fF_t \mbox{ for any }
t\in T \mbox{ is }\\
\mbox{ stable with Hilbert polynomial }
\alpha 
\end{array} \right\}/(\mbox{equiv}). 
\end{align*}
Here  
$\fF$ and $\fF'$
are \textit{equivalent} if there is an line bundle $\lL$ on $T$
such that $\fF \cong \fF' \otimes p_T^{\ast}\lL$, 
where $p_T \colon X \times T \to T$ is the projection. 
The moduli functor (\ref{mfunct}) is not always
representable by a scheme, but 
if we assume that 
\begin{align}\label{primitive}
\mathrm{g. c. d.}\{\alpha(m) : m\in \mathbb{Z}\}=1
\end{align}
then (\ref{mfunct})
is represented by a projective scheme $M_{\alpha}$
(cf.~\cite{Mu2}), i.e.
there is an isomorphism of functors
\begin{align}\label{funct:isom}
\Hom(-, M_{\alpha}) \stackrel{\cong}{\to}
\mM_{\alpha}. 
\end{align}
We call $\alpha$ satisfying the condition 
(\ref{primitive}) as \textit{primitive}.
Below, we always assume
that $\alpha$ is primitive. 
Note that the isomorphism (\ref{funct:isom}) is
induced by a universal family 
\begin{align*}
\uU \in \Coh(X \times M_{\alpha}). 
\end{align*}
\subsection{DG moduli spaces of stable sheaves}
Note that $X=\mathrm{Proj}(A)$ for 
the graded algebra
\begin{align*}
A=\bigoplus_{i\ge 0}H^0(X, \oO_X(i)). 
\end{align*}
We use the
quiver with 
relation $(Q_{[p, q]}, I)$ 
constructed in Definition~\ref{defi:Q}
from the above graded algebra $A$. 
For $q > p >0$, we set
\begin{align}\label{GammaU}
\Gamma_{[p, q]}(\uU) 
\cneq \bigoplus_{i=p}^{q}
p_{M\ast}(\uU \otimes p_X^{\ast}\oO_X(i)). 
\end{align}
Here
$p_M$, $p_X$ are the projections from 
$X \times M_{\alpha}$ to $M_{\alpha}$, $X$
respectively. 
If we take $q \gg p \gg 0$, then 
(\ref{GammaU}) is a flat representation of 
$(Q_{[p, q]}, I)$
over $M_{\alpha}$
with the primitive dimension vector 
\begin{align}\label{gam:prim}
\gamma=(\alpha(p), \alpha(p+1), \cdots, \alpha(q)). 
\end{align}
The object (\ref{GammaU})
defines the morphism of schemes
\begin{align}\label{Upsilon}
\Upsilon \colon 
M_{\alpha} \to M_{\gamma}
\end{align}
which is an open immersion by~\cite[Corollary~3.4]{BFHR}. 
Let $M_{[p, q]} \subset M_{\gamma}$ be the image of $\Upsilon$. 
Since both sides of (\ref{Upsilon}) are projective, the image 
$M_{[p, q]}$ 
is a union of connected components of $M_{\gamma}$. 
We have the isomorphism of schemes
\begin{align}\label{Upsilon:isom}
\Upsilon \colon M_{\alpha} \stackrel{\cong}{\to}
M_{[p, q]}. 
\end{align}
In particular, the object (\ref{GammaU})
is a universal $(Q_{[p, q]}, I)$-representation 
restricted to $M_{[p, q]}$. 
By replacing $\uU$ by $\uU \otimes p_M^{\ast}\lL$
for some line bundle $\lL$ on $M_{\alpha}$, 
we may assume that 
the universal family $\vV_{\bullet}$ given in (\ref{univ:rep})
restricted to $M_{[p, q]}$ 
coincides with (\ref{GammaU}).  

Recall that $M_{\gamma}$ is obtained as the 
zero-th truncation of the 
smooth commutative dg-scheme 
$(N_{\gamma}, \oO_{N_{\gamma}, \bullet})$
given by (\ref{com:dga}). 
By taking a suitable open 
subset $N_{\alpha} \subset N_{\gamma}$ 
containing $M_{[p, q]}$,  
the following result was proved in~\cite{BFHR}: 
\begin{thm}\emph{(\cite{BFHR})}\label{thm:smdg}
There is an open subset 
$N_{\alpha} \subset N_{\gamma}$ such that the 
 smooth commutative dg-scheme
\begin{align}\label{com:dga2}
(N_{\alpha}, \oO_{N_{\alpha}, \bullet} \cneq 
\oO_{N_{\gamma}, \bullet}|_{N_{\alpha}})
\end{align}
satisfies the following: 
\begin{itemize}
\item The zero-th truncation of (\ref{com:dga2}) 
is isomorphic to $M_{\alpha}$. 
\item For any $[E] \in M_{\alpha}$, the tangent 
complex of (\ref{com:dga2}) at $[E]$ is 
\begin{align}\label{tan:com}
\mathrm{Cone}(\mathbb{C} \to \dR \Hom(E, E))[1]. 
\end{align}
\end{itemize}
\end{thm}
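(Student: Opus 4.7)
The plan is to construct $N_\alpha$ as an explicit open subset of $N_\gamma$ and then verify the two asserted properties by a direct tangent-complex computation combined with a Serre-type correspondence. For the choice of $N_\alpha$, note that by (\ref{Upsilon:isom}) the image $M_{[p,q]}$ is the open image of the projective variety $M_\alpha$, hence is also closed in the projective $M_\gamma$, so it is a union of connected components. Setting $Z := M_\gamma \setminus M_{[p,q]}$, which is closed in $M_\gamma$ and therefore closed in $N_\gamma$, I would take $N_\alpha := N_\gamma \setminus Z$. Since the zero-th truncation commutes with open restriction, this yields $\tau_0(N_\alpha, \oO_{N_\alpha, \bullet}) = M_\gamma \cap N_\alpha = M_{[p,q]} \cong M_\alpha$.

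For the tangent complex at $[E] \in M_\alpha$, write $W_{\bullet} = \Gamma_{[p,q]}(\uU)|_{[E]}$ for the associated graded $A$-module. Using the explicit curved DGLA used in (\ref{com:dga}) to define $\oO_{N_\gamma, \bullet}$, the complex $\overline{\Omega}_{N_\gamma, \bullet}|_{[E]}$ dualizes to the positive-degree part of the (unreduced) Hochschild cochain complex of $A$ with values in $\End(W_\bullet)$, i.e. to a complex with $n$-th term $\Hom_{\mathrm{gr}}(\mathfrak{m}^{\otimes n+1}, \End(W_\bullet))$ and bar differential. Its degree zero piece $T_{N_\gamma}|_{[E]}$ is a free quotient by the $\overline{G}$-action, and the infinitesimal $\overline{G}$-action on $L^1$ is precisely the map $\End(W_\bullet)/\mathbb{C} \to \Hom_{\mathrm{gr}}(\mathfrak{m}, \End(W_\bullet))$, $g \mapsto [g, e]$, which is the first Hochschild coboundary modulo the scalar $\mathbb{C}$. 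Assembling these, $\overline{T}_{N_\alpha, \bullet}|_{[E]}$ is quasi-isomorphic to the shift $\mathrm{Cone}(\mathbb{C} \to C^\bullet(A; \End(W_\bullet)))[1]$, where $C^\bullet(A; \End(W_\bullet))$ is the (reduced) Hochschild complex computing $\dR\Hom_A(W_\bullet, W_\bullet)$.

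The remaining step is to identify this algebraic $\dR\Hom_A$ with the geometric $\dR\Hom_X(E, E)$ appearing in the statement. I would use the standard fact that for $p$ sufficiently large (and $q$ sufficiently larger than $p$), depending only on $\alpha$ by boundedness of the moduli of semistable sheaves with Hilbert polynomial $\alpha$, every stable $E \in M_\alpha$ is $p$-Castelnuovo-Mumford regular, and the truncation functor $E \mapsto \Gamma_{[p,q]}(E)$ realizes a derived fully faithful embedding of such $E$ into graded $A$-modules concentrated in degrees $[p,q]$, with $\dR\Hom_A(\Gamma_{[p,q]}(E), \Gamma_{[p,q]}(E)) \simeq \dR\Hom_X(E, E)$. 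This is precisely where one must shrink $N_\gamma$ to such an $N_\alpha$, so that the assertion holds uniformly over all $[E] \in M_\alpha$, hence yields the required identification of tangent complexes.

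The main obstacle is the uniform comparison between the Hochschild/bar complex of the graded module $\Gamma_{[p,q]}(E)$ and $\dR\Hom_X(E, E)$: one needs uniform vanishing of $H^{>0}(X, E \otimes E^{\vee}(k))$ for $k$ in the right range, together with the vanishing/surjectivity at the boundary of the window $[p,q]$ that controls the edge terms of the bar resolution. Boundedness of the moduli of stable sheaves of fixed $\alpha$ provides a uniform regularity bound, and standard arguments of Artin-Zhang (used similarly in \cite{BFHR}) then convert this into the required derived equivalence on the relevant full subcategories. The rest of the argument, namely writing down the map of complexes and checking it is a quasi-isomorphism term by term, is mechanical once these regularity inputs are in hand.
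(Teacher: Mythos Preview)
The paper does not give its own proof of this theorem: it is quoted verbatim from \cite{BFHR} and stated with that citation, so there is nothing in the present paper to compare your argument against. Your sketch is a plausible reconstruction of the argument in \cite{BFHR}: define $N_\alpha$ by deleting the components of $M_\gamma$ not in the image of $\Upsilon$, identify the tangent complex of the curved-DGLA dg-scheme at a point with a normalized bar/Hochschild complex of the graded module $W_\bullet=\Gamma_{[p,q]}(E)$ (corrected by the $\overline{G}$-orbit direction and the scalar $\mathbb{C}$), and then invoke uniform Castelnuovo--Mumford regularity to identify $\dR\Hom$ in graded $A$-modules with $\dR\Hom_X(E,E)$.

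Two small points worth tightening. First, in the tangent computation the Lie algebra of $G$ is the \emph{degree-preserving} endomorphisms $\End_{\mathrm{gr}}(W_\bullet)=\bigoplus_k \End(W_k)$, not all of $\End(W_\bullet)$; your formula for the infinitesimal action should read $\End_{\mathrm{gr}}(W_\bullet)/\mathbb{C}\to L^1$, and this is exactly the degree-zero part of the bar differential. Second, the step you flag as ``the main obstacle'' is indeed the crux: one must show that for $q\gg p\gg 0$ the functor $\Gamma_{[p,q]}$ induces an isomorphism $\dR\Hom_{A\mbox{-}\mathrm{gr}}(W_\bullet,W_\bullet)\simeq \dR\Hom_X(E,E)$ uniformly over $M_\alpha$. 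This is not just Castelnuovo--Mumford regularity of $E$; one also needs enough of a window so that the truncated bar resolution of $W_\bullet$ agrees with the full one in the relevant degree range. That is precisely what \cite{BFHR} proves, and your outline correctly isolates it.
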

\subsection{Existence of a quasi NCDG structure}
One can also extend the isomorphism (\ref{Upsilon:isom}) to their 
NC thickenings. 
Let
\begin{align*}
h_{\alpha} \colon \nN \to \sS et
\end{align*}
be the functor sending $R\in \nN$ 
to the isomorphism classes of 
triples $(f, \fF, \psi)$: 
\begin{itemize}
\item $f$ is a morphism of schemes 
$f \colon \Spec R^{ab} \to M_{\alpha}$. 
\item $\fF$ is an object of $\Coh(X_{R})$
which is flat over $R$, where $X_{R}\cneq X \times \Spf R$. 
\item $\psi$ is an isomorphism $\psi \colon \fF^{ab} 
\stackrel{\cong}{\to} f^{\ast}\uU$. 
\end{itemize}
An isomorphism $(f, \fF, \psi) \to (f', \fF', \psi')$
exists if $f=f'$, 
and there is an isomorphism $\fF \to \fF'$ in $\Coh(X_{R})$
commuting $\psi$, $\psi'$. 

\begin{prop}\emph{(\cite[Proposition~4.13]{Todnc})}\label{thm:extend}
The isomorphism (\ref{Upsilon:isom})
extends to the isomorphism of functors
\begin{align*}
\Gamma_{[p, q]} \colon 
h_{\alpha} \stackrel{\cong}{\to} h_{\gamma}|_{M_{[p, q]}}. 
\end{align*}
\end{prop}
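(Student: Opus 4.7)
The plan is to construct $\Gamma_{[p,q]}$ on NC nilpotent bases as a pushforward functor, and then prove bijectivity by induction on NC nilpotency degree. For $R \in \nN$ and $(f, \fF, \psi) \in h_\alpha(R)$, let $p_R \colon X_R \to \Spf R$ and $p_X \colon X_R \to X$ be the projections, and define
\[
\Gamma_{[p,q]}(\fF) \cneq \bigoplus_{i=p}^q p_{R \ast}\bigl(\fF \otimes p_X^{\ast}\oO_X(i)\bigr),
\]
equipped with the $(Q_{[p,q]}, I)$-representation structure induced by the multiplication action of sections of $A$. Taking $q \gg p \gg 0$ (depending only on $\alpha$, by boundedness of the family parameterized by $M_\alpha$) ensures that higher pushforwards vanish, so the result is a flat $(Q_{[p,q]}, I)$-representation over $\Spf R$ in the sense of~\cite[Definition~3.1]{Todnc}; the isomorphism $\psi$ then yields an isomorphism $\Gamma_{[p,q]}(\fF)^{ab} \cong (\Upsilon \circ f)^{\ast}\vV$, giving a well-defined natural transformation $\Gamma_{[p,q]} \colon h_\alpha \to h_\gamma|_{M_{[p,q]}}$.

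The next step is bijectivity on the commutative subcategory. When $R = R^{ab}$ is commutative, both $h_\alpha(R)$ and $h_\gamma|_{M_{[p,q]}}(R)$ are computed by classical flat deformation theory, and the isomorphism (\ref{Upsilon:isom}) together with the identification $\Gamma_{[p,q]}(\uU) = \vV|_{M_{[p,q]}}$ (arranged by twisting $\uU$ by a line bundle) shows that $\Gamma_{[p,q]}$ restricted to commutative $R$ is an isomorphism of functors.

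For the general NC case, I would induct on the NC nilpotency degree $d$ of $R$ via central extensions $0 \to J \to R_1 \to R_2 \to 0$ in $\nN$ with $J^2 = 0$. Non-commutative deformation theory (cf.~\cite{Lau}, \cite{ELO}, \cite{Todnc}) shows that the fibers of the forgetful maps $h_\alpha(R_1) \to h_\alpha(R_2)$ and $h_\gamma|_{M_{[p,q]}}(R_1) \to h_\gamma|_{M_{[p,q]}}(R_2)$ over a given element are either empty or torsors under $\Ext^1(E, E) \otimes J$, with obstructions in $\Ext^2(E, E) \otimes J$; the $\Ext$-groups on the source are computed in $\Coh(X)$ (cf.~(\ref{tan:com})), while those on the target are computed in $\mathrm{Rep}(Q_{[p,q]}, I)$. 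For $q \gg p \gg 0$, the pushforward $\Gamma_{[p,q]}$ induces isomorphisms
\[
\Ext^j_{\Coh(X)}(E, E) \stackrel{\cong}{\to} \Ext^j_{\mathrm{Rep}(Q_{[p,q]}, I)}\bigl(\Gamma_{[p,q]}(E), \Gamma_{[p,q]}(E)\bigr)
\]
for $j = 0, 1, 2$ by Serre vanishing applied to $E \otimes \oO_X(i)$ together with the standard equivalence between $D^b(X)$ and a suitable quotient of graded $A$-modules. Combining this with the inductive hypothesis on $R_2$ then reduces the bijectivity of $\Gamma_{[p,q]}(R_1)$ to identifying the corresponding torsor structures and obstruction classes.

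The hard part will be matching obstruction classes, not just first-order deformations. Showing that $\Gamma_{[p,q]}$ sends the obstruction to lifting a sheaf-theoretic deformation on the source to the obstruction for lifting the corresponding quiver representation amounts to checking that the $A_\infty$-structures on $\dR\Hom_X(E, E)$ and $\dR\Hom_{Q_{[p,q]}, I}(\Gamma_{[p,q]}E, \Gamma_{[p,q]}E)$, which govern the pro-representing hulls $R_E^{\n}$ of (\ref{intro:R}) on each side, correspond under $\Gamma_{[p,q]}$. I expect this will follow from promoting $\Gamma_{[p,q]}$ to a dg-functor between suitable dg-enhancements and showing it is a quasi-equivalence in the cohomological range relevant to the $A_\infty$-products appearing in $R_E^{\n}$. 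Once this $A_\infty$-compatibility is established, the two pro-representing hulls are canonically isomorphic and the induction closes.
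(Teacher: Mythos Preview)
The paper does not actually prove this proposition here; it is imported wholesale from \cite[Proposition~4.13]{Todnc}, so there is no in-text argument to compare against. Your outline is a plausible reconstruction of how such a proof could go, and the first two steps (constructing $\Gamma_{[p,q]}$ over NC bases via pushforward, and checking the commutative case via the open immersion $\Upsilon$) are correct and essentially forced.

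Where your proposal diverges from what one should expect in \cite{Todnc} is in the inductive step. You propose to close the induction by promoting $\Gamma_{[p,q]}$ to a quasi-equivalence of dg-enhancements and matching the $A_\infty$-structures governing $R_E^{\n}$ on both sides. This would work, but it is heavier machinery than necessary, and it is circular in spirit: the identification of $\widehat{\oO}_{V_i,[E]}^{\n}$ with the $A_\infty$-hull (\ref{intro:R}) in \cite{Todnc} is itself \emph{deduced from} the present proposition combined with the NC-hull property of Proposition~\ref{prop:nchull}, not the other way around. The argument in \cite{Todnc} is more direct: one constructs an explicit inverse to $\Gamma_{[p,q]}$ by the truncated-graded-module-to-sheaf functor (the Serre/Proj construction), checks that for $q\gg p\gg 0$ both $\Gamma_{[p,q]}$ and its putative inverse preserve flatness over an arbitrary $R\in\nN$ (this is where Serre vanishing enters, and it only uses that $R$ has a finite filtration with $R^{ab}$-module subquotients), and verifies the two composites are the identity. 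No $\Ext$-group or obstruction-class matching is required, because one exhibits a two-sided inverse at the level of objects rather than arguing fiberwise over central extensions.

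So your approach is not wrong, but the $A_\infty$ step is both unproven in your sketch and avoidable; replacing it with the explicit inverse functor gives a cleaner and self-contained argument.
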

By combining Theorem~\ref{thm:ncdg}, Theorem~\ref{thm:smdg} and 
Proposition~\ref{thm:extend}, 
we obtain the following: 
\begin{thm}\label{thm:ncvir2}
There is a smooth quasi NCDG structure 
$\{(U_i, \oO_{U_i, \bullet}^{\n})\}_{i\in \mathbb{I}}$
on the smooth commutative dg-moduli space
$(N_{\alpha}, \oO_{N_{\alpha}, \bullet})$
such that 
the zero-th truncations
\begin{align}\label{qnc:V}
\{(V_i, \oO_{V_i}^{\n})\}_{i\in \mathbb{I}} \cneq
\{\tau_0(U_i, \oO_{U_i, \bullet}^{\n})\}_{i \in \mathbb{I}}
\end{align}
is a quasi NC structure on $M_{\alpha}$
which fit into NC hulls $h_{V_i^{\n}} \to h_{\alpha}|_{V_i}$. 
\end{thm}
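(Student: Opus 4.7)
The plan is to assemble Theorem~\ref{thm:ncvir2} from three ingredients already in hand: Theorem~\ref{thm:ncdg} supplies a smooth quasi NCDG structure on the quiver dg-moduli $(N_{\gamma}, \oO_{N_{\gamma}, \bullet})$, Theorem~\ref{thm:smdg} realizes $M_{\alpha}$ as the zero-th truncation of the open subscheme $(N_{\alpha}, \oO_{N_{\alpha}, \bullet}) \subset (N_{\gamma}, \oO_{N_{\gamma}, \bullet})$, and Proposition~\ref{thm:extend} provides the functorial identification $\Gamma_{[p, q]} \colon h_{\alpha} \stackrel{\cong}{\to} h_{\gamma}|_{M_{[p, q]}}$.

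First, I would start with the smooth quasi NCDG structure $\{(U_i', \oO_{U_i', \bullet}^{\n})\}_{i \in \mathbb{I}'}$ on $(N_{\gamma}, \oO_{N_{\gamma}, \bullet})$ given by Theorem~\ref{thm:ncdg}, and set $U_i \cneq U_i' \cap N_{\alpha}$, restricting the index set to those $i$ with $U_i \neq \emptyset$ and further refining by distinguished affine opens so that each $U_i \subset N_{\alpha}$ is affine. By Lemma~\ref{lem:mult}, the restriction of an affine NCDG scheme to a distinguished affine open is again an affine NCDG scheme, so the restrictions $\oO_{U_i, \bullet}^{\n} \cneq \oO_{U_i', \bullet}^{\n}|_{U_i}$ define a smooth quasi NCDG structure on $(N_{\alpha}, \oO_{N_{\alpha}, \bullet})$, with gluing isomorphisms obtained by restricting those on $N_{\gamma}$.

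Second, the zero-th truncation of each $(U_i, \oO_{U_i, \bullet}^{\n})$ coincides with the local NC structure $(V_i, \oO_{V_i}^{\n})$ of Theorem~\ref{thm:qnc} via the identity (\ref{id:V}), now viewed on $V_i = M_{[p, q]} \cap U_i$. Under the open immersion $\Upsilon \colon M_{\alpha} \stackrel{\cong}{\to} M_{[p, q]}$ of (\ref{Upsilon:isom}), these assemble into a quasi NC structure on $M_{\alpha}$ in the sense of Definition~\ref{def:QNC}, as Theorem~\ref{thm:qnc} already guarantees the gluing cocycle conditions up to $\id$ on abelizations.

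Third, and this is the delicate step, I need to verify the NC hull property $h_{V_i^{\n}} \to h_{\alpha}|_{V_i}$. By Proposition~\ref{prop:nchull}, each $V_i^{\n}$ is an NC hull of the quiver moduli functor $h_{\gamma}|_{V_i}$; since the universal representation $\vV$ was chosen in Subsection~\ref{subsec:Quasi} so that $\vV|_{M_{[p, q]}} = \Gamma_{[p, q]}(\uU)$, the isomorphism $\Gamma_{[p, q]}$ of Proposition~\ref{thm:extend} intertwines the universal objects and hence transports the NC hull property: the bijection on commutative points and the surjectivity on central extensions in $\nN$ are both preserved under composition with the isomorphism of functors $\Gamma_{[p, q]}$. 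The main obstacle is purely bookkeeping, namely coherently matching the line-bundle normalization of $\vV$ with $\Gamma_{[p, q]}(\uU)$ so that the NC hull triangle commutes on the nose, but this is arranged exactly as in the proof of the quasi NC case in~\cite{Todnc} once Theorem~\ref{thm:ncdg} is in place.
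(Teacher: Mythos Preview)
Your proposal is correct and follows exactly the paper's approach: the paper's own proof is a single sentence stating that the result is obtained by combining Theorem~\ref{thm:ncdg}, Theorem~\ref{thm:smdg} and Proposition~\ref{thm:extend}, and you have spelled out precisely how these three ingredients fit together. Your care in refining the cover by distinguished affines (invoking Lemma~\ref{lem:mult}) and in tracking the line-bundle normalization of the universal family is more explicit than the paper itself, but entirely in line with what is implicit there.
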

The quasi NC 
structure in (\ref{qnc:V}) 
is the one constructed in~\cite{Todnc}. 
By~\cite[Theorem~1.2]{Todnc}, 
it satisfies the following
condition. 
For $[E] \in V_i$, let 
$\widehat{\oO}_{V_i, [E]}^{\n}$
be the completion of $\oO_{V_i}^{\n}$ at $[E]$. 
Then it coincides with the 
pro-representable hull of the 
NC deformation functor of $E$
developed in~\cite{Lau}, \cite{Erik}, \cite{ESe}, \cite{ELO}, 
\cite{ELO2}, \cite{ELO3}.
This implies that we have an isomorphism of algebras
\begin{align*}
\widehat{\oO}_{V_i, [E]} \cong
R_E^{\n}
\end{align*}
where $R_E^{\n}$ is the algebra (\ref{intro:R})
constructed by the $A_{\infty}$-structure
(\ref{intro:A}). 

\subsection{An example}
We take $X=\mathbb{P}^2$ and 
$\alpha$ to be the constant function $1$. 
Note that a stable sheaf on $X$ has Hilbert polynomial $1$
if and only if it is a skyscraper sheaf $\oO_x$
for $x\in \mathbb{P}^2$. 
Then the moduli space $M_{\alpha}$ is isomorphic to 
$\mathbb{P}^2$ itself. 
On the other hand, by Beilinson's theorem~\cite{Bei},
we have the derived equivalence
\begin{align*}
\dR \Hom(\eE, -) \colon D^b(\Coh(\mathbb{P}^2)) \stackrel{\sim}{\to}
D^b (\modu A)
\end{align*} 
where $\eE$ and $A$ are given by
\begin{align*}
\eE=\oO_{\mathbb{P}^2} \oplus \oO_{\mathbb{P}^2}(-1) \oplus 
\oO_{\mathbb{P}^2}(-2), \ A=\End(\eE).
\end{align*}
By the above equivalence, one can take $p=0$ and $q=2$ in the 
argument of the previous subsection. 
The quiver $Q_{[0, 2]}$
is described as
\begin{align}\label{fig:Q}
\xymatrix{ \stackrel{0}{\bullet} 
\ar@<2ex>[r]_{x_1}
\ar@<0ex>[r]_{x_2}
\ar@<-2ex>[r]_{x_3}
\ar@/^/@<3ex>[rr]_{z_{11}}
\ar@/^/@<5ex>[rr]_{z_{22}}
\ar@/^/@<7ex>[rr]_{z_{33}}
\ar@/_/@<-3ex>[rr]^{z_{12}}
\ar@/_/@<-5ex>[rr]^{z_{13}}
\ar@/_/@<-7ex>[rr]^{z_{23}}
& \stackrel{1}{\bullet}
\ar@<2ex>[r]_{y_1}
\ar@<0ex>[r]_{y_2}
\ar@<-2ex>[r]_{y_3}
& \stackrel{2}{\bullet}
 }
\end{align}
with relations 
given by
\begin{align*}
z_{ij}=y_{j} x_i=y_i x_j, \ 1\le i \le j \le 3. 
\end{align*}
The dimension vector
(\ref{gam:prim}) is $\gamma=(1, 1, 1)$. 
The moduli space $N_{\gamma}$ of representations of 
$Q_{[0, 2]}$ without relation
is the quotient of the stable locus of 
$\mathbb{C}^3 \times \mathbb{C}^3 \times \mathbb{C}^6$
by $(\mathbb{C}^{\ast})^{\times 2}$. 
It contains an open subset $U \subset N_{\gamma}$ which 
parametrizes representations of (\ref{fig:Q})
with $x_3=y_3=1$ and 
$x_1, x_2, y_1, y_2, z_{ij} \in \mathbb{C}$, i.e.
\begin{align*}
U=\Spec \mathbb{C}[x_1, x_2, y_1, y_2, z_{ij} : 1\le i \le j \le 3]. 
\end{align*}
An NC smooth thickening of $U$ is given by
\begin{align*}
U^{\n}=\Spf R, \ R=\mathbb{C}\langle 
x_1, x_2, y_1, y_2, z_{ij} : 1\le i \le j \le 3 
\rangle_{[\hspace{-0.5mm}[ab]\hspace{-0.5mm}]}. 
\end{align*}
Let $\vV$ be the universal representation 
of $Q_{[0, 2]}$ on $N_{\gamma}$, which is a rank three 
vector bundle. 
A lift of $\vV|_{U}$ to $U^{\n}$ is given by the 
following representation
\begin{align}\notag
\xymatrix{ R
\ar@<2ex>[r]_{\cdot x_1}
\ar@<0ex>[r]_{\cdot x_2}
\ar@<-2ex>[r]_{1}
\ar@/^/@<3ex>[rr]_{\cdot z_{11}}
\ar@/^/@<5ex>[rr]_{\cdot z_{22}}
\ar@/^/@<7ex>[rr]_{\cdot z_{33}}
\ar@/_/@<-3ex>[rr]^{\cdot z_{12}}
\ar@/_/@<-5ex>[rr]^{\cdot z_{13}}
\ar@/_/@<-7ex>[rr]^{\cdot z_{23}}
& R
\ar@<2ex>[r]_{\cdot y_1}
\ar@<0ex>[r]_{\cdot y_2}
\ar@<-2ex>[r]_{1}
& R.
 }
\end{align}
The algebra (\ref{U:ncdg}) is then given by 
\begin{align*}
\Lambda_{U, \bullet}^{\n}=R \ast T(\mathfrak{m}_1^{\otimes 2})
\end{align*}
where $\mathfrak{m}_1=H^0(\mathbb{P}^2, \oO_{\mathbb{P}^2}(1))$
and $\mathfrak{m}_1^{\otimes 2}$
is located in degree $-1$. 
It is written as
a suitable NC completion of
\begin{align*}
\mathbb{C}\langle x_1, x_2, y_1, y_2, z_{ij}, w_{kl} : 
1\le i \le j \le 3, 1\le k, l \le 3 \rangle
\end{align*} 
where $\deg x_i=\deg y_i=\deg z_{ij}=0$ and $\deg w_{kl}=-1$ and the 
differential is given by
\begin{align*}
Q \colon 
w_{kl} \mapsto z_{kl}-y_k x_l
\end{align*}
where we set $z_{kl}=z_{lk}$ if $k>l$ and 
$x_3=y_3=1$. 
Then 
$\Spf \Lambda_{U, \bullet}^{\n}$ is
a smooth affine NCDG structure on 
its abelization
$(U, \oO_U \otimes S(\mathfrak{m}_1^{\otimes 2}))$. 
The  zero-th 
truncation 
of $\Spf \Lambda_{U, \bullet}^{\n}$
gives $\mathbb{C}^2 \subset \mathbb{P}^2=M_{\alpha\equiv 1}$. 

\subsection{NC virtual structure sheaves on moduli spaces of stable sheaves}
Now we assume that 
the smooth 
commutative dg-scheme (\ref{com:dga2}) is a $[0, 1]$-manifold, 
which means that
the tangent complex of (\ref{com:dga2}) has 
amplitude in $[0, 1]$. 
By (\ref{tan:com}), 
this is equivalent to the condition 
\begin{align}\label{high:ob}
\Ext^{i}(E, E)=0, \ i\ge 3
\end{align}
for any $[E] \in M_{\alpha}$. 
Applying Definition~\ref{defi:ncvir}
to the quasi NCDG structure in Theorem~\ref{thm:ncvir2}, 
we obtain the 
$d$-th NC virtual structure sheaf
\begin{align*}
(\oO_{M_{\alpha}}^{\rm{ncvir}})^{\le d} \in K_0(M_{\alpha}). 
\end{align*}
By Theorem~\ref{thm:formula}
and (\ref{tan:com}), 
we have the following:
\begin{cor}
The $d$-th NC virtual structure sheaf 
associated to the quasi NCDG structure in 
Theorem~\ref{thm:ncvir2} is written as 
\begin{align}\label{ncvir:formula2}
(\oO_{M_{\alpha}}^{\rm{ncvir}})^{\le d}=\oO_{M_{\alpha}}^{\rm{vir}} 
\otimes_{\oO_{M_{\alpha}}}
[S_{\oO_{M_{\alpha}}}L_{\oO_{M_{\alpha}}}^{+}
(\eE_{\bullet})^{\le d}_{\bullet}]. 
\end{align}
Here $\eE_{\bullet} \to \tau_{\ge -1} \dL_{M_{\alpha}}$
is a perfect obstruction theory on $M_{\alpha}$
such that
for any $[E] \in M_{\alpha}$ we have
\begin{align*}
\hH_0(\eE_{\bullet}^{\vee}|_{[E]})=\Ext^1(E, E), \ 
\hH_1(\eE_{\bullet}^{\vee}|_{[E]})=\Ext^2(E, E). 
\end{align*}
\end{cor}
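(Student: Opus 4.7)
The plan is to deduce the corollary as a direct application of Theorem~\ref{thm:formula} to the quasi NCDG structure produced in Theorem~\ref{thm:ncvir2}. First I would check that the smooth commutative dg-scheme $(N_{\alpha}, \oO_{N_{\alpha}, \bullet})$ of Theorem~\ref{thm:smdg} is a $[0,1]$-manifold under the standing assumption (\ref{high:ob}). By the second bullet of Theorem~\ref{thm:smdg}, the fiber of the tangent complex $\overline{T}_{N_{\alpha}, \bullet}|_{M_{\alpha}}$ at a point $[E]\in M_{\alpha}$ is computed as $\mathrm{Cone}(\mathbb{C}\to \dR\Hom(E,E))[1]$, whose cohomology in degree $i$ equals $\Ext^{i+1}(E,E)$ for $i\geq 1$ and $\Ext^1(E,E)$ in degree $0$ (the degree $0$ piece absorbs the trace-free splitting via $\mathbb{C}\to \dR\Hom(E,E)$). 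The vanishing of $\Ext^{\ge 3}(E,E)$ thus concentrates these cohomologies in $[0,1]$ pointwise, verifying the $[0,1]$-manifold condition.

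Next, Theorem~\ref{thm:ncvir2} provides the smooth quasi NCDG structure on $(N_{\alpha}, \oO_{N_{\alpha}, \bullet})$ whose zero-th truncation is the quasi NC structure on $M_{\alpha}$, so Definition~\ref{defi:ncvir} applies to give the class $(\oO_{M_{\alpha}}^{\rm{ncvir}})^{\le d}\in K_0(M_{\alpha})$. Now I would invoke Theorem~\ref{thm:formula}: it expresses this class as
\begin{align*}
(\oO_{M_{\alpha}}^{\rm{ncvir}})^{\le d}
=\oO_{M_{\alpha}}^{\rm{vir}}\otimes_{\oO_{M_{\alpha}}}
[S_{\oO_{M_{\alpha}}}L_{\oO_{M_{\alpha}}}^{+}(\eE_{\bullet})_{\bullet}^{\le d}]
\end{align*}
where $\eE_{\bullet}$ is the two-term complex (\ref{E:qis}) built from the cotangent complex of the dg-scheme and equipped with the perfect obstruction theory (\ref{perfect}) in the sense of~\cite{BF}. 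This immediately yields the formula (\ref{ncvir:formula2}).

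It remains to identify the fibers of $\eE_{\bullet}^{\vee}$ at a point $[E]\in M_{\alpha}$. Since $\eE_{\bullet}$ is defined as the dual of the complex (\ref{T:qis}) that is quasi-isomorphic to $\overline{T}_{N_{\alpha}, \bullet}|_{M_{\alpha}}$, taking fibers at $[E]$ and dualizing intertwines with the description in (\ref{tan:com}). Concretely, $\eE_{\bullet}^{\vee}|_{[E]}$ is quasi-isomorphic to the cone $\mathrm{Cone}(\mathbb{C}\to \dR\Hom(E,E))[1]$, whose cohomologies are $\hH_0=\Ext^1(E,E)$ and $\hH_1=\Ext^2(E,E)$ (with the trace component absorbed by $\mathbb{C}$). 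Putting these together gives the stated perfect obstruction theory and completes the proof. The main point of care, and the one I would double-check, is the bookkeeping that passes from the tangent complex description in Theorem~\ref{thm:smdg} to the dual two-term presentation of $\eE_{\bullet}$ in (\ref{E:qis}), ensuring that the shift and duality conventions agree so that the cohomology identification comes out with the correct indices.
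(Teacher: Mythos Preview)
Your proposal is correct and follows essentially the same route as the paper: the corollary is stated there as an immediate consequence of Theorem~\ref{thm:formula} together with the tangent complex identification (\ref{tan:com}) from Theorem~\ref{thm:smdg}, and you have simply unpacked this in more detail, including the verification of the $[0,1]$-manifold condition from (\ref{high:ob}) and the pointwise computation of $\hH_i(\eE_{\bullet}^{\vee}|_{[E]})$.
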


\providecommand{\bysame}{\leavevmode\hbox to3em{\hrulefill}\thinspace}
\providecommand{\MR}{\relax\ifhmode\unskip\space\fi MR }
\providecommand{\MRhref}[2]{%
  \href{http://www.ams.org/mathscinet-getitem?mr=#1}{#2}
}
\providecommand{\href}[2]{#2}


Kavli Institute for the Physics and 
Mathematics of the Universe, University of Tokyo,
5-1-5 Kashiwanoha, Kashiwa, 277-8583, Japan.

\textit{E-mail address}: yukinobu.toda@ipmu.jp

\end{document}